\documentclass[journal,twoside,web]{ieeecolor}

\usepackage{ulem}
\usepackage{soul}
\usepackage[dvipsnames]{xcolor}
\usepackage{generic}
\usepackage{cite}
\usepackage{amsmath,amssymb,amsfonts,extarrows}
\usepackage{algorithmic}
\usepackage{graphicx}
\usepackage{textcomp}
\usepackage{bm}
\usepackage{tikz}
\usepackage{color}
\usepackage{setspace}
\usepackage{subfigure}
\usetikzlibrary{positioning,shapes,arrows}
\def\BibTeX{{\rm B\kern-.05em{\sc i\kern-.025em b}\kern-.08em
    T\kern-.1667em\lower.7ex\hbox{E}\kern-.125emX}}

\usepackage[switch]{lineno}

\newtheorem{theorem}{\bf Theorem}{}
\newtheorem{corollary}{\bf Corollary}{}
\newtheorem{remark}{\bf Remark}{}
\newtheorem{lemma}{\bf Lemma}{}
\newtheorem{proposition}{\bf Proposition}
\newtheorem{definition}{\bf Definition}{}
\newtheorem{assumption}{\bf Assumption}{}
\newtheorem{problem}{\bf Problem}[subsection]
\newtheorem{example}{\bf Example}{}


\newcommand{\NNsimple}{
    \tikzstyle{inputNode}=[circle,draw=magenta,fill=blue!10,minimum size=6pt,inner sep=0pt]
    \tikzstyle{stateTransition}=[-stealth, thin]
    \begin{tikzpicture}
                
        \node[inputNode, thick] (i1) at (0, 0.2) {};
        \node[inputNode, thick] (i2) at (0, -0.2) {};

        \node[inputNode, thick] (h11) at (0.75, 0.35) {};
        \node[inputNode, thick] (h12) at (0.75, 0.0) {};
        \node[inputNode, thick] (h13) at (0.75, -0.35) {};

        \node[inputNode, thick] (h21) at (1.5, 0.35) {};
        \node[inputNode, thick] (h22) at (1.5, 0.0) {};
        \node[inputNode, thick] (h23) at (1.5, -0.35) {};

        \node[inputNode, thick] (h31) at (2.25, 0.2) {};
        \node[inputNode, thick] (h32) at (2.25, -0.2) {};

        \draw[stateTransition] (i1) -- (h11);
        \draw[stateTransition] (i1) -- (h12);
        \draw[stateTransition] (i1) -- (h13);
        \draw[stateTransition] (i2) -- (h11);
        \draw[stateTransition] (i2) -- (h12);
        \draw[stateTransition] (i2) -- (h13);

        \draw[stateTransition] (h11) -- (h21);
        \draw[stateTransition] (h11) -- (h22);
        \draw[stateTransition] (h11) -- (h23);
        \draw[stateTransition] (h12) -- (h21);
        \draw[stateTransition] (h12) -- (h22);
        \draw[stateTransition] (h12) -- (h23);  
        \draw[stateTransition] (h13) -- (h21);
        \draw[stateTransition] (h13) -- (h22);
        \draw[stateTransition] (h13) -- (h23);

        \draw[stateTransition] (h21) -- (h31);
        \draw[stateTransition] (h21) -- (h32);
        \draw[stateTransition] (h22) -- (h31);
        \draw[stateTransition] (h22) -- (h32);
        \draw[stateTransition] (h23) -- (h31);
        \draw[stateTransition] (h23) -- (h32);

        \draw[stateTransition] (-0.375, 0.2) --  (i1);
        \draw[stateTransition] (-0.375, -0.2) -- (i2);
        \draw[stateTransition] (h31) -- (2.625, 0.2);
        \draw[stateTransition] (h32) -- (2.625, -0.2);
    \end{tikzpicture}
}

\newcommand*{\num}{4}

\definecolor{rose_red}{RGB}{230,28,93}
\definecolor{heavy_purple}{RGB}{147,0,119}
\definecolor{heavy_blue}{RGB}{58,0,136}

\markboth{\journalname, VOL. XX, NO. XX, XXXX 2022}
{Chen \MakeLowercase{\textit{et al.}}: Neural observer with Lyapunov stability guarantee for uncertain nonlinear systems}

\begin{document}

\title{Neural Observer with Lyapunov Stability Guarantee for Uncertain Nonlinear Systems}

\author{Song Chen, Shengze Cai, Tehuan Chen, Chao Xu, and Jian Chu
\thanks{Manuscript received xx xx, 202x; accepted xx xx, 202x. This work was supported by the National Key Research and Development Program of China under Grant 2019YFB1705800, the National Natural Science Foundation of China number 61973270, the Zhejiang Provincial Natural Science Foundation of China number LY21F030003, and the Zhejiang Provincial Natural Science Foundation of China number LY19A010024. (\textit{Corresponding author: Chao Xu}.)}
\thanks{S. Chen is with the School of Mathematical Sciences, Zhejiang University, Hangzhou, Zhejiang 310027, China (e-mail: math\_cs@zju.edu.cn).}
\thanks{T. Chen is with School of Mechanical Engineering and Mechanics, Ningbo University, Ningbo, Zhejiang 315211, China, and also with Ningbo Artificial Intelligence Institute, Shanghai Jiao Tong University, Ningbo, Zhejiang 315000, China.  (e-mail: chentehuan@nbu.edu.cn).}
\thanks{S. Cai and J. Chu are with the State Key Laboratory of Industrial Control Technology, Institute of Cyber-Systems and Control, Zhejiang University, Hangzhou, Zhejiang 310027, China (e-mail: shengze\_cai@zju.edu.cn, chuj@iipc.zju.edu.cn).}
\thanks{C. Xu is with the State Key Laboratory of Industrial Control Technology, Institute of Cyber-Systems and Control, Zhejiang University, Hangzhou, Zhejiang 310027, China, and also with Huzhou Institute of Zhejiang University, Huzhou, Zhejiang 313000, China.  (e-mail: cxu@zju.edu.cn).}
}

\maketitle

\begin{abstract}
In this paper, we propose a novel nonlinear observer based on neural networks, called neural observer, for observation tasks of linear time-invariant (LTI) systems and uncertain nonlinear systems. 
In particular, the neural observer designed for uncertain systems is inspired by the active disturbance rejection control, which can measure the uncertainty in real-time. 
The stability analysis (e.g., exponential convergence rate) of LTI and uncertain nonlinear systems (involving neural observers) are presented and guaranteed, where it is shown that the observation problems can be solved only using the linear matrix inequalities (LMIs). 
Also, it is revealed that the observability and controllability of the system matrices are required to demonstrate the existence of solutions of LMIs.
Finally, the effectiveness of neural observers is verified on three simulation cases, including the X-29A aircraft model, the nonlinear pendulum, and the four-wheel steering vehicle.
\end{abstract}

\begin{IEEEkeywords}
neural network, nonlinear observer, active disturbance rejection control, uncertain systems, linear matrix inequalities, observability and controllability
\end{IEEEkeywords}

\section{Introduction}
    \IEEEPARstart{W}{ith} the success of machine learning (ML) algorithms in various complex tasks such as computer vision and natural language processing, connecting ML with control theory  has become a hot topic in recent years and is attracting more and more researchers \cite{jordan2015machine,tsukamoto2021contraction,lukas2022safe}. On the one hand, the data-driven ML methods have been widely used to deal with nonlinear control problems, which can be traced back to early years when the neural network (NN) theory was proposed \cite{werbos1989neural,levin1993control,levin1996control,ge2013stable}. 
    However, it is not easy to utilize the model information to construct the control input when the model itself contains uncertainties (e.g., unmodeled dynamics). 
    To tackle this challenge, more effective modeling methods based on deep NNs are gradually coming into our vision recently, such as physical-informed ML \cite{karniadakis2021physics}, stable deep dynamics learning \cite{kolter2019learning}, and neural operator learning \cite{lu2021learning}. There are also many works proposed to address high-dimensional control problems (e.g., solving the Hamilton-Jacobi-Bellman equation) and state observation problems via learning methods~\cite{han2018solving,bottcher2022ai,breiten2021neural,chakrabarty2021safe,abdollahi2006stable,nguyen2021neural,qifeng2021gaussian}, indicating that the ML methods can be successfully employed in control and identification problems. 
    On the other hand, the classical control theories are conversely applied to explain why and how the ML algorithms work~\cite{li2018maximum,lessard2016analysis,lin2021control,el2021implicit}. For example, the convergence performance of optimization algorithms can be analyzed via linear matrix inequalities (LMIs) \cite{lessard2016analysis}.
    

    \textcolor{black}{Despite the aforementioned advances, there are still some intractable challenges in learning-based control via deep NNs. For examples, how to directly analyze the control performance (e.g., stability, optimality, etc.) of a system equipped with NN mappings remains a problem~\cite{tsukamoto2021contraction,lukas2022safe}. It is not straightforward to apply the nonlinear control theory \cite{fazlyab2022safety,tipaldi2022reinforcement}, as there are various types of nonlinear activation functions and numerous parameters in an NN mapping. {\color{black}Additionally, such systems are generally vulnerable to various malicious perturbations \cite{chen2020towards} due to the black-box nature of deep NNs.} Furthermore, the training process is highly dependent to the data, thus one needs to appropriately select the sampling method for system state and consider the training data distribution, reducing the impact on the closed-loop system~\cite{lederer2020training,lederer2021impact}. Lastly, how to interpret that the trained NNs are applicable is also an open question in the ML community.}
    
    {\color{black} In this paper, we mainly focus on the state observation tasks, where the observers are designed based on neural networks. Following \cite{chakrabarty2021safe,abdollahi2006stable,nguyen2021neural}, the following dynamical model and the corresponding observer are considered:
    \begin{equation*}
    \begin{aligned}
        &\text{Dynamical model:}
        \left\{
        \begin{aligned}
            \dot{x}&=Ax+g(x,u)\\
            y&=Cx,
        \end{aligned}
        \right.\\
        &\text{NN-based Observer:}
        \left\{
        \begin{aligned}
            \dot{\widehat{x}}&=A\widehat{x}+\pi_{\theta}(\widehat{x},u)+G(y-C\widehat{x})\\
            \widehat{y}&=C\widehat{x},
        \end{aligned}
        \right.
    \end{aligned}
    \end{equation*}
    where $(C, A)$ is observable and the NN $\pi_{\theta}(x,u)$ in the observer is trained to approximate the uncertainty $g(x,u)$. Then, one could provide an NN-based observer to achieve  $\widehat{x}\rightarrow x$. In this context, we are motivated to ask a question: \textit{how can we find a concise condition to verify the availability of {\color{black}an NN} for the system, with which the performance is not limited by the sampling method and can be directly analyzed?} If the condition exists, most of the aforementioned challenges can be addressed. } 
    
    Recently,  \cite{yin2022stability} and~\cite{pauli2021offset} proposed an efficient method, based on  quadratic constraints (QC) and linear matrix inequality (LMI), {\color{black}to analyze the robust stability in equilibrium points of systems controlled by one state-feedback NN mapping controller $u(k)=\pi_{\theta}(x(k))$.} {\color{black} However, the analysis of robust stability in \cite[Theorem. 2]{yin2022stability} highly depends on the assumption that the perturbation $\Delta$ is bounded and depends on the skilled construction of filter $\Psi_{\Delta}$, which is applied to capture the correlation between the input and output signals of $\Delta$ against time. } 
    As for the LMIs conditions that guarantee the stability in \cite{yin2022stability,pauli2021offset}, they do not explicitly indicate whether the solutions of LMIs exist or not. Moreover, the filter $\Psi_{\Delta}$ may also complicate to solve the LMI condition \cite[Theorem. 2]{yin2022stability}. 
    In our work, {\color{black}instead of constructing a filter $\Psi_{\Delta}$ for the uncertain systems}, we design neural observers inspired by the essential philosophy of active disturbance rejection control (ADRC) proposed in \cite{han2009adrc}, where the basic idea is to regard the ``total uncertainty'' as an extended state of the system. By applying ADRC, one can estimate the uncertainty and compensate it in the control input in real-time. The theorems about ADRC can be found in \cite{guo2016active,freidovich2008performance}.

\subsection{Paper Contribution}
    The contributions in this paper can be summarized as follows:
    
    $\bm{(1)}$ This work belongs to the category of using machine learning in control problems. {\color{black} We introduce a specially structured NN mapping $\pi_{\theta}(\cdot)$ to design nonlinear neural observers for the observation task of dynamical systems.} We first propose two relative definitions: \textit{neural observable} and {\color{black}\textit{neural exponentially observable}}. Ideologically, for a controllable and observable linear time-invariant (LTI) system, we construct a Luenberger-form neural observer derived from the feedback of errors and employ the {\color{black}estimated state} $\widehat{x}$ to design the feedback NN control law $u=\pi_{\theta}(\widehat{x})$. For two classes of nonlinear systems (i.e., \textcolor{black}{integrator chain} nonlinear systems and MIMO nonlinear systems consisting a linear dynamic part and the uncertainty), we respectively design the corresponding neural observers to measure the state and the ``total uncertainty'' by inheriting the idea of ADRC \cite{han2009adrc}. More details are given in Sections \ref{sec: Neural Observer} and \ref{sec: problem formulation}.
    
    $\bm{(2)}$ {\color{black} We develop the NN isolation method and QCs (see Lemma \ref{lem: QC for K mappings}) for {\color{black}an NN} mapping vector $\bm{\pi_{\theta}}(\cdot)$, which is composed of $K$ NN mappings, i.e., $\bm{\pi_{\theta}}(\cdot)=[\pi^{\top}_{\theta_1}(\cdot),\cdots,\pi^{\top}_{\theta_{K}}(\cdot)]^{\top}$ with parameters $\bm{\theta}=(\theta_1,\cdots,\theta_K)$.} In addition, we point out that Lemma \ref{lem: QC for K mappings} can be used to deduce the linear matrix inequality (LMI) of closed-loop dynamics under a feedback interconnection. More details can be found in Section \ref{sec: NN Representation and NN mapping vector}.
    
    $\bm{(3)}$ {\color{black} We provide a verification framework for the availability of NNs in NN-based systems via LMIs.} The first and the second results (see Theorem \ref{theo: LMI for LTI without control}-\ref{theo: LMI for LTI with control}) provide LMI conditions to guarantee the neural exponential observability and globally exponential stability for LTI systems, respectively. Furthermore, in these cases, we reveal the relationship between the existence of solutions for LMIs and the observability and controllability of LTI systems (see Proposition \ref{prop: existence of solution of LMI}-\ref{prop: existence of solution of LMI 2}). Under this fundamental framework, we provide the third and the fourth results (see Theorem \ref{theo: LMI for integral-chain systems}-\ref{theo: LMI for nonintegral-chain systems}), which achieve the neural observability for \textcolor{black}{integrator chain} nonlinear systems and a class of MIMO nonlinear systems, respectively. Different from Theorem \ref{theo: LMI for LTI without control}-\ref{theo: LMI for LTI with control}, Theorem \ref{theo: LMI for integral-chain systems}-\ref{theo: LMI for nonintegral-chain systems} can not only guarantee the observability but also measure the uncertainty in real-time.
    
    This paper is organized as follows. In Section \ref{sec: Neural Observer}, we present the key ideas of the neural observer. In Section \ref{sec: problem formulation}, we propose two definitions of observability as well as the formulation of neural observers for different kinds of systems. Moreover, relevant observation problems are also defined. 
    Section \ref{sec: NN Representation and NN mapping vector} discusses the NN isolation and QCs method for the NN mapping and the NN mapping vector. Then, the convergence analysis associated with the observation problems are provided in Section \ref{sec: Main Result}. Finally, in Section \ref{sec: Numerical Experiments}, we provide the simulation results to verify the efficiency of our framework. 

    \subsection{Mathematical Notation} 
    
    $\bullet$ $\mathbf{R}^n$ denotes $n$-dimensional real linear space. In this paragraph, only the real linear and finite dimensional spaces are considered. Each space, $\mathcal{M}$, holds an inner product $\langle w, v\rangle=w^{\top}v$ and a norm $\|w\|_2^2=\langle w, w\rangle$. 
    We use pointwise orders $\geq,>$ for any vectors $w,v \in \mathcal{M}$, i.e., $w \geq (>) v \Longleftrightarrow w_i\geq (>) v_i$, $i\in\{1,\cdots,\dim(\mathcal{M})\}$. The set of real numbers in the interval $[a, b] \subset \mathbf{R}$ is denoted by $\mathcal{T}_{[a,b]}$, and the set of real numbers in the interval $[a, \infty) \subset \mathbf{R}$ is $\mathcal{T}_{\geq a}$. 
    $\mathcal{L}(\mathcal{M}, \mathcal{N})$ accounts for the space of all linear mappings (matrices) from ``$\mathcal{M}$'' to ``$\mathcal{N}$''. For any mappings $T \in \mathcal{L}(\mathcal{M}, \mathcal{N})$, the induced $2$-norm is defined by $\|T\|_2=\sup_{x\in \mathcal{M}, \|x\|_2=1}\|Tx\|_2$. Specially, when the linear spaces $\mathcal{M}$ and $\mathcal{N}$ are identical, $\mathcal{L}(\mathcal{M}, \mathcal{N})$ can be abbreviated to $\mathcal{L}(\mathcal{M})$. 
    Given a mapping $T \in \mathcal{L}(\mathcal{M})$, $T\succ 0$ represents $T=T^{\top}$ and $\langle Tw, w\rangle \geq 0$ for all $w \in \mathcal{M}$, where ``$=$" is true if and only if $w=0$; and $\lambda_{\max}(T)$, $\lambda_{\min}(T)$ denote the maximum and minimum eigenvalue, respectively. 
    In addition, $\operatorname{diag}(A_1,\cdots,A_n)$ represents a diagonal block matrix, where the $i_{\text{th}}$ diagonal block is $A_i$. $\bm{0}_n,\bm{1}_m$ are $n$-dimensional zero vector and $m$-dimensional vector whose entries are all ones, respectively.
    
    $\bullet$ The space of $k$-th continuously differentiable functions from $\mathcal{M}$ to $\mathcal{N}$ is denoted by $C^k(\mathcal{M}; \mathcal{N})$. Similarly, when the spaces $\mathcal{M}$ and $\mathcal{N}$ are identical, the notation $C^k(\mathcal{M})$ is used for short. For any differentiable vector function $\mathcal{F}(\bm{x})$, $\mathcal{F}_{x_i}$ represents the partial derivative of $\mathcal{F}$ with respect to $x_i$.  $\mathcal{O}(\alpha^k)$ is said to be the infinitesimal of $k$-order of $\alpha$ if $\lim_{\alpha \rightarrow 0}\frac{\mathcal{O}(\alpha^k)}{\alpha^k} = c$, $c\neq 0$. 

\section{Neural Observer}
\label{sec: Neural Observer}
    \tikzstyle{G}=[draw,rounded corners,fill=black!20!red!20!white, minimum height=3em, minimum width=7em,very thick]
    \tikzstyle{N}=[draw,rounded corners,fill=black!20!blue!20!white, minimum height=3em, minimum width=7em,very thick]
    \tikzstyle{sum} = [draw,circle, fill=black!20!blue!40!white,radius=1em,very thick]
    
    In this paper, we focus on an underlying, but not necessarily single-input single-output (SISO) or multiple-input multiple-output (MIMO), continuous-time nonlinear and uncertain system formulated by an ordinary differential equation (ODE):
    \begin{equation}
        \begin{aligned}
            \dot{x}(t)&=f(x(t),u(t),w(t),t),\\
            y(t) &= Cx(t),
        \end{aligned}
        \label{eq: general systems}
    \end{equation}
    where $x(t)\in \mathbf{R}^{n_s}$, $u(t) \in \mathbf{R}^{n_u}$, and $y(t) \in \mathbf{R}^{n_o}$ denote the state, the control input, and the control output, respectively. Generally, the external disturbances $w \in \mathbf{R}^{n_d}$ satisfying $\sup_{t \in \mathcal{T}_{\geq 0}}\|(w,\dot{w})\|<\infty$ are considered in dynamical systems. $f \in C^1(\mathbf{R}^{n_s} \times \mathbf{R}^{n_u} \times \mathbf{R}^{n_d}; \mathbf{R}^{n_s})$ {\color{black}is a nonlinear function} called the \textit{total uncertainty}, might be partially unknown or totally unknown. $C \in \mathcal{L}(\mathbf{R}^{n_s},\mathbf{R}^{n_o})$ denotes the observation matrix of the system.
    
    Due to uncertainty and disturbance, the direct measurement of state would be costly and less credible. Nevertheless, the output measurement is convenient to obtain. 
    Hence, {\color{black}\textbf{our observation objective} is to design an NN based output-feedback observer (neural observer)}, such that the state of system \eqref{eq: general systems} is \textit{globally observable} for any initial state $x(0)$ and total uncertainty. 
    
    To present the structure of the neural observer, we first introduce an output-feedback neural network (NN) mapping $\pi_{\theta}(\cdot)$ with a parameter $\theta$. We consider $\pi_{\theta}(\cdot)$ as a feed-forward NN with $L$ hidden layers and activation functions $\sigma(\cdot) \in C(\mathbf{R}|\sigma(0)=0)$ that are identical in all layers. It should be noted that the input is the $0_{\text{th}}$-layer and the output is given by the $(L+1)_{\text{th}}$-layer, i.e., $\pi_{\theta}^{[0]}(x)=x$ and $\pi_{\theta}^{[L+1]}(x)=\pi_{\theta}(x)$. Let $n_l$ be the number of neurons in $l_{\text{th}}$-layer. By given weights matrix $W^{l} \in \mathcal{L}(\mathbf{R}^{n_{l-1}},\mathbf{R}^{n_l}) $, the $(L+4)$-tuple parameter $\theta$ and the NN mapping $\pi_{\theta}(\cdot)$ are defined as follows:
    \begin{equation}
        \begin{aligned}
            \theta &=\left(L,n_{\sigma},W^{1},\cdots,W^{L+2}\right),\ n_{\sigma}\triangleq \sum_{i=1}^L n_i\\ 
            \pi^{[l]}_{\theta}(x)&=\sigma^{[l]}\left(W^{l}\pi^{[l-1]}_{\theta}(x)\right),\ l=1,\cdots,L,\\
            \pi_{\theta}(x)&=W^{L+1}\pi^{[L]}_{\theta}(x)+W^{L+2}\pi_{\theta}^{[0]}(x),
        \end{aligned}
        \label{eq: resnets}
    \end{equation}
    where $\sigma^{[l]}(x)=[\sigma(x_1),\cdots,\sigma(x_{n_l})]^{\top}$. We note that when $W^{L+2}\neq O$, the NN mapping is called the residual neural network proposed by He et al. \cite{he2016deep} and shown in Fig. \ref{fig: resnets}.
    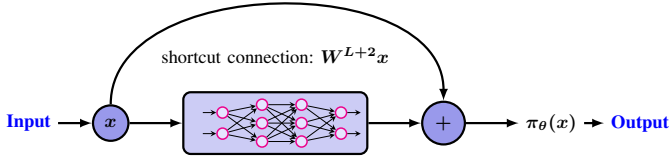
\begin{figure}[t]
        \centering
        \scalebox{0.7}{
        \begin{tikzpicture}[auto, node distance=2cm]
            \node [N] (observer) {\NNsimple};
            \node[sum, inner sep=4pt, left=1cm of observer.west, very thick] (input) {$\bm{x}$};
            \node[sum, inner sep=4pt, right=1cm of observer.east,very thick] (sum) {$\bm{+}$};
            \node[ right=1cm of sum.east,very thick] (output) {$\bm{\pi_{\theta}(x)}$};
            
            \draw[latex-,very thick] (input) --++ (-1cm,0)node[left,pos=1]{\textcolor{blue}{\textbf{Input}}};
            \draw[-latex,very thick] (input)--(observer);
            \draw[-latex,very thick] (observer)--(sum);
            \draw[-latex,very thick] (sum) --(output); 
            \draw[-latex,very thick] (input.north) to [out=90,in=90](sum.north) node[yshift=1cm,pos=0.5]{shortcut connection: $\bm{W^{L+2}x}$};
             \draw[-latex,very thick] (output) --++ (1cm,0)node[right,pos=1]{\textcolor{blue}{\textbf{Output}}};
        \end{tikzpicture}
        }
        \caption{Residual neural network: a feed-forward NN with $L$ hidden layers and a shortcut connection.}
        \label{fig: resnets}
    \end{figure}
    
    Our work focuses on such {\color{black}an NN} mapping, which facilitates revealing the existence of the parameter $\theta$ in observer $\pi_{\theta}(\cdot)$ that would be demonstrated in Remark \ref{rm: role explanation for shortcut}. Based on the defined NN, we construct the neural observer as following:
    \begin{equation}
    \left\{
        \begin{aligned}                \dot{\widehat{x}}(t)&=g(\widehat{x}(t),u(t),\bm{\pi_{\theta}}(y-\widehat{y})),\\
            \widehat{y}&=C\widehat{x},
        \end{aligned}
    \right.
    \label{eq: general nn observers}
    \end{equation}
    where $\widehat{x} \in \mathbf{R}^{n_{\widehat{s}}}$ and $\widehat{y} \in \mathbf{R}^{n_o}$ are {\color{black}estimated state and estimated output}, respectively. $\bm{\pi_{\theta}}(y-\widehat{y})=[\pi^{\top}_{\theta_1}(y-\widehat{y}),\cdots,\pi^{\top}_{\theta_{K}}(y-\widehat{y})]^{\top}$ represents the NN mapping vector, where $\theta_i=(L_i,n_{\sigma_i}, W_i^{1},\cdots,W_i^{L_i+2})$. Moreover, $g$ is a known and continuous function.
    The block diagram of the neural observation framework is shown in Fig. \ref{fig: neural observation framework}.
    
    \begin{figure}[t]
        \centering
        \scalebox{.7}{
            \begin{tikzpicture}[auto]
                \node[G](plant) at (0em,0em) {Plant};
                \node[N, below=4em of plant] (NN) {Neural Observer};
                \draw[very thick,latex-](plant.west)--++(-4em,0)node[above,pos=0.5,name=u]{$u$};
                \draw[very thick,-latex](plant.east)--++(4em,0)node[above,pos=0.5,name=y]{$y$};
                \draw[very thick,-latex](u)|-(NN.west);
                \draw[very thick,-latex](NN.east)--++(4em,0)node[above,pos=0.45,name=hat_y]{$\widehat{y}$};
                \node[sum,below=2em of y](sum){$\bm{+}$};
                \draw[very thick,-latex](y)--(sum.north);
                \draw[very thick,-latex](hat_y)--(sum.south)node[right,pos=0.8]{$\bm{-}$};
                \draw[very thick,-latex](sum.west)-|(NN.north);
            \end{tikzpicture}
        }
        \caption{The block diagram of the neural observation framework: the observed error $y(t)-\widehat{y}(t)$ is the input of NN mapping vector in the neural observer.}
        \label{fig: neural observation framework}
    \end{figure}
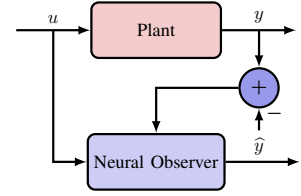
    
    \begin{remark}[The keys to neural observers]
    \quad\quad\quad\quad\quad\quad
    \begin{itemize}
        \item[(1)] It is worth noting that the dimension of $\widehat{x}$ is designed to be no lower than the state $x$, i.e., $n_{\widehat{s}} \geq n_s$, since the relatively higher dimensional information may improve the estimation performance, which is similar to the well-known kernel trick in machine learning algorithms (such as Gaussian process regression \cite{rasmussen2005gaussian}).
        \item[(2)] Furthermore, \textcolor{black}{the construction of the continuous function $g$ follows the ``white box modeling'' information, including but not limited to the known system matrices $A,B,C$ in the LTI system (see 
        details in Sec. \ref{sec: problem formulation} A), the integrator chain structure and the order of models $n$ in the integrator chain system (see Sec. \ref{sec: problem formulation} B), the known system matrices $A, B, C, B_w$ in the nonlinear system (see Sec. \ref{sec: problem formulation} C), and so on. }
        \item[(3)] \textcolor{black}{Moreover, in order to reduce the computational complexity, the parameters of different NNs in an NN mapping vector $\bm{\pi_{\theta}}(\cdot)=[\pi^{\top}_{\theta_1}(\cdot),\cdots,\pi^{\top}_{\theta_{K}}(\cdot)]^{\top}$ could be identical. For example, $\theta_{n_i}=\theta_{n_j}$ for $i,j \in I$, where $I$ is an index-subset of $\{1,\cdots,K\}$.}
    \end{itemize}
    \label{rm: keys for NO}
    \end{remark}

\section{Problem Formulation}
\label{sec: problem formulation}
    

    Before presenting the main results, we first clarify the main problems that we will consider in this paper. We aim to  find a family of architectures, including $\bm{\pi_{\theta}}$ and $g(\widehat{x},u,\bm{\pi_{\theta}})$, for output-feedback observation tasks. Moreover, the architectures are expected to ensure the existence of NN parameter $\bm{\theta}$ and enable the implementation of neural observers. 
    To analyze the neural observers theoretically, we propose the following definitions:
    
    \begin{definition}[\textbf{neural observable}]
    
   We suppose that there exists the NN mapping vector $\bm{\pi_{\theta}}$ such that the closed-loop system composed of \eqref{eq: general systems} and \eqref{eq: general nn observers} satisfies for all $x(0),\widehat{x}(0)$,
    $$
    \|x(t)-T\widehat{x}(t)\|_2\rightarrow0, \text{in some sense, for example,}\ t\rightarrow +\infty,
    $$
    where $T\in\mathcal{L}(\mathbf{R}^{n_{\widehat{s}}},\mathbf{R}^{n_s})$ with $T_{ij}=1,\ i=j;\ T_{ij}=0,\ {\color{black}\text{else}}$.
    Then we say that the system \eqref{eq: general systems} is \textbf{neural observable}.
    \label{def: neural observable}
    \end{definition}
    
   We note that $T=I$ if $n_s$ is equal to $n_{\widehat{s}}$. And $T$ is also applied in the following definition.
    \begin{definition}[{\color{black}\textbf{neural exponentially observable}}]
   We consider the aforementioned closed-loop system. If there exists two constants $M>0$, $\kappa >0$, and the NN mapping vector $\bm{\pi_{\theta}}$, such that for any initial {\color{black}state} $x(0)$ and $\widehat{x}(0)$, the closed-loop system satisfies that for all $t>0$,
    $$
    \|x(t)-T\widehat{x}(t)\|_2\leq M\exp^{-\kappa t}\{\|x(0)\|_2+\|\widehat{x}(0)\|_2\},
    $$
    then the system \eqref{eq: general systems} is called {\color{black}\textbf{neural exponentially observable}}.
    \end{definition}
    

    In this paper, the canonical observation problems for three specific dynamical models of system~\eqref{eq: general systems} are taken into account, including the linear systems without uncertainty, the \textcolor{black}{integrator chain} systems and the MIMO nonlinear systems (consisting of a linear dynamic part and the general uncertainty). Based on the above definitions, we would like to post the question: \textit{under what conditions are these systems neural observable?}
    
    \subsection{Neural Observers for Linear Systems}
    \tikzstyle{block} = [draw, fill=black!20!red!20!white, rectangle, minimum height=3em, minimum width=6em, very thick]
    \tikzstyle{block_NN} = [draw, fill=black!20!blue!20!white, rectangle, minimum height=3em, minimum width=6em, very thick]
    \tikzstyle{sum} = [draw, fill=black!20!blue!40!white, circle, node distance=1cm, very thick]
    \tikzstyle{input} = [coordinate]
    \tikzstyle{output} = [coordinate]
    
    We first consider the following continuous-time LTI system without uncertainty, that is a typical case in \eqref{eq: general systems}:
    \begin{equation}
        \left\{
        \begin{aligned}
        \dot{x}&=Ax+Bu,\ x(0)=x_0,\\
        y&=Cx,
        \end{aligned}
        \right.
        \label{eq: linear systems}
    \end{equation}
    where $A\in \mathcal{L}(\mathbf{R}^{n_s})$ and $B\in \mathcal{L}(\mathbf{R}^{n_s},\mathbf{R}^{n_u})$ are known system matrices. For the neural observable problem, we employ the standard assumption.
    \begin{assumption}
    $(A, B)$ is controllable, and $(C, A)$ is observable.
    \label{ap: control-observe condition}
    \end{assumption}
    
    Due to the availability of system matrices $(A,B,C)$, we can construct a neural observer corresponding to \eqref{eq: linear systems}, which is consistent with Remark \ref{rm: keys for NO}, as follows:
    \begin{equation}
    \left\{
        \begin{aligned}
            \dot{\widehat{x}}&=A\widehat{x}+Bu+\pi_{\theta_1}(y-\widehat{y}),\ \widehat{x}(0)=\widehat{x}_0,\\
            \widehat{y}&=C\widehat{x},
        \end{aligned}
    \right.
    \label{eq: nos for linear systems}  
    \end{equation}
    where $\pi_{\theta_1}(\cdot)$ is {\color{black}an NN} mapping. The concrete neural observation diagram for the system \eqref{eq: linear systems} is shown in Fig.~\ref{fig: NN-continuous}.  
    Accordingly, we propose the following intuitive questions:
    \begin{figure}[t]
        \centering
        \scalebox{0.7}{
            \begin{tikzpicture}[auto,  node distance=2cm, >=latex']
                \node [input] (input) {};
                \node [block, right of=input,  node distance=3cm] (nominal plant) {$\dot{x}(t)=Ax(t)+Bu(t)$};
                \draw [draw,-latex,very thick] (input) -- node [name=u] {$u(t)$} (nominal plant);
                \node [block, right of=nominal plant, node distance=4.5cm] (nominal output) {$y=Cx$};
                \draw  [-latex,very thick] (nominal plant) -- node[name=x] {$x(t)$} (nominal output);
                \node [output, right of=nominal output, node distance=3cm] (output) {$y(t)$};
                \draw [-latex,very thick] (nominal output) -- node [name=y] {$y(t)$}(output);

                \node [sum, below of=y, node distance=2.5cm] (sum) {$\bm{+}$};
                \draw [-latex,very thick] (y) -- (sum);
                \node [block_NN, below of=nominal plant, node distance=2.25cm] (NN) {\NNsimple};
                \draw [-latex,very thick] (sum) --node  [near end] {$y(t)-\widehat{y}(t)$} (NN);
                \draw (NN)  node [yshift=-0.45cm, xshift=-1.5cm] {$\pi_{\theta_1}$};

                \node [block_NN, below of=NN, node distance=2.25cm] (observer plant) {$\dot{\widehat{x}}(t)=A\widehat{x}(t)+Bu(t)$};
                \draw [-latex,very thick] (NN) -- (observer plant);
                \node [block_NN, right of=observer plant, node distance=4.5cm] (observer output) {$\widehat{y}=C\widehat{x}$};
                \draw  [-latex,very thick] (observer plant) -- node[name=observer_x] {$\widehat{x}(t)$} (observer output);
                \draw [-latex,very thick] (observer output)  -| node [name=observer_y, near start] {$\widehat{y}(t)$} (sum);
                \draw [-latex,very thick] (u) |- (observer plant);
                \draw (sum) node [yshift=-0.5cm, xshift=0.25cm]  {$\bm{-}$};

            \end{tikzpicture}
        }
        \caption{The diagram of neural observers for LTI (continuous) systems}
        \label{fig: NN-continuous}
    \end{figure}
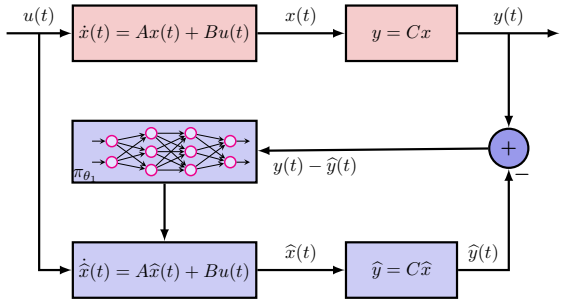
    \begin{problem}
    Under Assumption \ref{ap: control-observe condition}, what is the necessary condition for the system \eqref{eq: linear systems} to be {\color{black}neural exponentially observable}?
    \label{pro: no for linear systems}
    \end{problem}
    
    {\color{black}We note that in analogy with classical control problems, \textit{if the system \eqref{eq: linear systems} is {\color{black}neural exponentially observable}, the {\color{black}estimated state} $\widehat{x}(t)$ can be utilized to design a feedback controller for system \eqref{eq: linear systems}.}} Inspired by NN controllers for discrete-time LTI systems in \cite{yin2022stability,pauli2021offset}, we construct the following observer-based NN controller:
    \begin{equation}
        u(t) = \pi_{\theta_2}(\widehat{x}).
        \label{eq: controller for linear systems}
    \end{equation}
    Consequently, a second question for system \eqref{eq: linear systems} is raised:
    \begin{problem}
    By applying the control law $u(t)=\pi_{\theta_2}(\widehat{x})$, the problem is to find a suitable NN mapping $\pi_{\theta_2}(\cdot)$ such that $\lim_{t\rightarrow \infty} \|x(t)\|_2=0$, in the meanwhile the system \eqref{eq: linear systems} is {\color{black}neural exponentially observable} under the given Assumption \ref{ap: control-observe condition}.
    \label{prop: control problem for linear systems}
    \end{problem}
    
    The results for the above questions are considered as the most fundamental ones, which could be served as the baselines in the following sections. 
    
    \subsection{Neural Observers for Integrator Chain Nonlinear Systems}  
   We consider a class of SISO uncertain systems described by the following differential equation with an order of $n$:
    \begin{equation}
        \left\{
        \begin{aligned}
        x^{(n)}(t)&=\mathcal{F}(t,x(t),\cdots,x^{(n-1)}(t),w(t))+bu(t),\\
        y(t)&=x(t),
        \end{aligned}
        \right.
        \label{eq: integral-chain nonlinear systems}
    \end{equation}
    where $\mathcal{F}(\cdot)$ is an unknown and continuously differentiable function, and $b$ is a known constant. \textcolor{black}{The above system \eqref{eq: integral-chain nonlinear systems} is an integral-chain system that can be rewritten as a controller canonical form:
    \begin{equation}
        \left\{
        \begin{aligned}
            \dot{\bm{x}}&=\mathcal{A}\bm{x}+\mathcal{B}(\mathcal{F}(t,\bm{x},w)+bu),\ \bm{x}(0)=\bm{x}_0,\\
            y&=c\bm{x},
        \end{aligned}
        \right.
        \label{eq: integral-chain nonlinear systems_2}
    \end{equation}
    where $\bm{x} = [x_1,\cdots,x_n]^{\top}$, $\mathcal{A} = (a_{ij})_{n\times n}$ is defined by 
    \begin{equation*}
        \begin{aligned}
            a_{ij}&=\left\{\begin{array}{l}
        1,\ i+1=j  \\
        0,\ \text{else}
        \end{array}\right.,\ c=[1,0, \cdots, 0], \\
            \mathcal{B}&=[0, \ldots, 0,1]^{\top}. 
        \end{aligned}
    \end{equation*}
    Note that $(\mathcal{A},\mathcal{B},c)$ is a canonical form representation of a chain of $n$ integrators.} \textcolor{black}{We note that if $\mathcal{F}(\cdot)$ in the integral-chain system is known, one can prove that \eqref{eq: integral-chain nonlinear systems_2} is a flat and controllable system \cite{fliess1995flatness}. When the order $n=2$, the model \eqref{eq: integral-chain nonlinear systems} can describe most of the common physical systems via Newton's second law, including the inverted pendulum model shown in Section \ref{sec: Numerical Experiments}.} And due to the differentiability of $\mathcal{F}(\cdot)$, the system \eqref{eq: integral-chain nonlinear systems_2} is a case of \eqref{eq: general systems}. Then, we make some basic assumptions for nonlinear systems \eqref{eq: integral-chain nonlinear systems}.

    From the Remark \ref{rm: keys for NO}, we can regard the matrix $\mathcal{A}$ and $c$ as the knowledge that is used to describe the corresponding neural observer for systems \eqref{eq: integral-chain nonlinear systems}:
    \begin{equation}
        \left\{
        \begin{aligned}
            \dot{\widehat{x}}_i&=\widehat{x} _{i+1}+\epsilon^{n-i}\pi_{\theta_i}(\epsilon^{-n}(y-\widehat{y})),\widehat{x}_i(0)=\widehat{x}_{i,0},\\ \qquad &\qquad\qquad\qquad\qquad\qquad\qquad\qquad i=1,\cdots,n-1, \\
            \dot{\widehat{x}}_n&=\widehat{x} _{n+1}+\pi_{\theta_n}(\epsilon^{-n}(y-\widehat{y}))+bu,\widehat{x}_n(0)=\widehat{x}_{n,0},\\
            \dot{\widehat{x}}_{n+1}&=\epsilon^{-1}\pi_{\theta_{n+1}}(\epsilon^{-n}(y-\widehat{y})),\widehat{x}_{n+1}(0)=\widehat{x}_{n+1,0},\\
            \widehat{y}&=c\widehat{\bm{x}},
        \end{aligned}  
        \right.
        \label{eq: nos for integral-chain nonlinear systems}
    \end{equation}
    where $\epsilon$ is a positive constant, $\widehat{\bm{x}}=\left[\widehat{x}_1,\cdots,\widehat{x}_n\right]^{\top}$.
    
    \begin{assumption}[\cite{guo2013on,guo2016active}]
    (1) Firstly, there exists a continuous function $\chi(x,w)$ such that $\sup_{t \in \mathcal{T}_{\geq 0}}\|(\mathcal{F},\nabla \mathcal{F})\|_2\leq\chi(x,w)$. 
    (2) Secondly, there exists a bounded control $u(t)$ such that $\sup_{t \in \mathcal{T}_{\geq 0}} \{\|\bm{x}(t)\|_2+|u(t)|\}<\infty$.
    \label{ap: the uncertainty for integral-chain}
    \end{assumption}
    
    We note that the first assumption imposed the differentiability of noise $w$ and uncertainty $\mathcal{F}$ with respect to time. Based on the neural observer, we note that the simplest way to satisfy Assumption \ref{ap: the uncertainty for integral-chain} is to design the bounded control in the linear form $u(t)=\frac{\rho \sum_{i=1}^n k_i \operatorname{sat}_{M_i}(\rho^{n-i}\widehat{x}_i(t))-\operatorname{sat}_{M_{n+1}}(\widehat{x}_{n+1}(t))}{b}$ with parameters of $\rho,k_i,M_i$, where the control gain $K=[k_1,\cdots,k_n]$ is designed by the Hurwitz matrix $\mathcal{A}+c_0K$ with $c_0=[0,\cdots,1]^{\top}$, the details of which can be found in \cite{zhao2016active}.
    
    Therefore, we informally introduce the observation problem for systems \eqref{eq: integral-chain nonlinear systems}:
    \begin{problem}
    Consider the neural observer-based closed system \eqref{eq: integral-chain nonlinear systems}, \eqref{eq: nos for integral-chain nonlinear systems}, with the Assumption \ref{ap: the uncertainty for integral-chain}, we need to investigate the necessary conditions for the underlying result:
    \begin{itemize}
        \item the system \eqref{eq: integral-chain nonlinear systems} is neural observable.
    \end{itemize}
    \label{pro: dc for integral-chain systems}
    \end{problem}
    \begin{remark}
        We need to point out that the formulation of the neural observer \eqref{eq: nos for integral-chain nonlinear systems} resembles the extended state observer (ESO) in active disturbance rejection control \cite{guo2013on,zhao2017nonlinear}. But in fact, the construction of nonlinear functions in ESO is complicated in industrial processes. Hence, due to the approximating capability of NNs, we can take advantage of this property to relieve these pressures.
    \end{remark}
    \subsection{Neural Observers for MIMO systems}
    \textcolor{black}{As another case of system \eqref{eq: general systems}, the following MIMO nonlinear system (composed of a linear dynamic and general uncertainty) is taken into account.
    \begin{equation}
        \left\{
        \begin{aligned}
           \dot{x}&=Ax+Bu+B_w\mathcal{K}(x,w,t),\ x(0)=x_0,\\
            y&=Cx,
        \end{aligned}
        \right.
        \label{eq: nonintegral-chain systems}
    \end{equation}
    where $A$ and $B$ are defined in the same way as those in \eqref{eq: linear systems}. $\mathcal{K}\in C^1(\mathbf{R}^{n_s}\times\mathbf{R}^{n_d}\times\mathbf{R};\mathbf{R}^{n_q})$ represents the uncertainty with respect to $x(t)$, $w(t)$, and $t$. $B_w\in \mathcal{L}(\mathcal{R}^{n_q},\mathcal{R}^{n_s})$ is a known matrix.
    \begin{remark}
    Since system \eqref{eq: integral-chain nonlinear systems_2} is an affine control system,
    system \eqref{eq: integral-chain nonlinear systems_2} is one case of \eqref{eq: nonintegral-chain systems}. In other words, \eqref{eq: nonintegral-chain systems} is a more general formulation compared to \eqref{eq: integral-chain nonlinear systems}. We also note that \eqref{eq: nonintegral-chain systems} is not restricted to the integral chain form and may subject to the mismatched uncertainty and disturbances \cite{li2012generalized}.
    \end{remark}} 
    \begin{assumption}
    \begin{itemize}
        \item[(1)] \textcolor{black}{$A$, $C$ and $B_w$ satisfy an \textit{extending observable condition}, i.e.,
        $$
        (\mathbf{C},\mathbf{A})\triangleq\left(\left[C,O\right],\left[\begin{array}{cc}
        A & B_w\\
        O & O
        \end{array}\right]\right)\ \text{is observable};
        $$
        }
        \item[(2)] $\sup_{t \in \mathcal{T}_{\geq 0}}(\|\mathcal{K}\|_2,\|\nabla \mathcal{K}\|_2)\leq\varpi(x,w)$, where $\varpi(x,w)\in C(\mathbf{R}^{n_s+1};\mathbf{R}) $; 
        \item[(3)] \textcolor{black}{there exists a bounded control law $u=u(t)$ such that the state $x(t)$ is bounded.}
    \end{itemize}
    \label{ap: the uncertainty for nonintegral-chain systems}
    \end{assumption}
    \textcolor{black}{Now, we give an example to illustrate the \textit{extending observable condition} and a necessary condition of the extending observability.
    \begin{example}
        We consider that $A=\left[\begin{array}{cc}
        0 & 1\\
        1 & 1
        \end{array}\right]$, $B_w=[1,0]^{\top}$ and $C=[1,0]$. It is easy to check that 
        $$
        \operatorname{r}\left[\begin{array}{c}
        \mathbf{C}^{\top}, (\mathbf{CA})^{\top}, (\mathbf{CA^2})^{\top}
        \end{array}\right]=3 \Longleftrightarrow (\mathbf{C},\mathbf{A})\ \text{is observable}.
        $$
    \end{example}
    \begin{proposition}
        $(C,A)$ is observable if $A$, $C$, $B_w$ satisfy the \textit{extending observable} condition defined in Assumption \ref{ap: the uncertainty for nonintegral-chain systems}.
        \label{prop: ness-cond for extending observable}
    \end{proposition}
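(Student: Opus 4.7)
The cleanest route is the contrapositive via the Kalman observability matrix. I would first compute the block structure of the observability matrix of $(\mathbf{C},\mathbf{A})$. By induction on $k$, using the block triangular form of $\mathbf{A}$,
\begin{equation*}
\mathbf{C}\mathbf{A}^{k} = \bigl[\,CA^{k},\; CA^{k-1}B_w\,\bigr],\qquad k\geq 1,
\end{equation*}
with the convention $\mathbf{C}\mathbf{A}^{0}=[C,\,O]$. Hence the full observability matrix of $(\mathbf{C},\mathbf{A})$ has the block form
\begin{equation*}
\mathcal{O}_{(\mathbf{C},\mathbf{A})} \;=\; \begin{bmatrix} C & O \\ CA & CB_w \\ CA^{2} & CAB_w \\ \vdots & \vdots \\ CA^{n_s+n_q-1} & CA^{n_s+n_q-2}B_w \end{bmatrix}.
\end{equation*}

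The plan is then to argue by contradiction. Suppose $(C,A)$ is \emph{not} observable. By the standard Kalman rank test there exists a nonzero $v\in\mathbf{R}^{n_s}$ with $CA^{k}v=0$ for $k=0,1,\ldots,n_s-1$; Cayley--Hamilton extends this to all $k\geq 0$. I would then consider the padded vector $\tilde v = [v^{\top},\;\bm{0}_{n_q}^{\top}]^{\top}\in\mathbf{R}^{n_s+n_q}$ and verify by direct substitution that $\mathcal{O}_{(\mathbf{C},\mathbf{A})}\,\tilde v=\bm{0}$, since every row block evaluates to $CA^{k}v=0$. This produces a nonzero element of $\ker \mathcal{O}_{(\mathbf{C},\mathbf{A})}$, contradicting the assumed observability of $(\mathbf{C},\mathbf{A})$. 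Therefore $(C,A)$ must be observable.

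There is no real obstacle here; the argument is essentially bookkeeping on the block structure. The one minor point worth being explicit about is invoking Cayley--Hamilton to pass from the $n_s$-step null condition for $(C,A)$ to the $(n_s+n_q)$-step one needed to conclude $\mathcal{O}_{(\mathbf{C},\mathbf{A})}\tilde v=0$, since the two observability matrices have different row counts. An equally short alternative would be the PBH test: evaluating $\mathrm{rank}\bigl[\mathbf{A}-\lambda I;\;\mathbf{C}\bigr]$ at each eigenvalue $\lambda$ and extracting the analogous rank statement for $[A-\lambda I;\;C]$ from the block-triangular structure; I would keep the Kalman-matrix proof as the primary one because the extraction of the witness $\tilde v$ is transparent.
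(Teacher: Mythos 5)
Your proof is correct, but it takes a genuinely different route from the paper. You work with the Kalman observability matrix: you compute $\mathbf{C}\mathbf{A}^{k}=[CA^{k},\,CA^{k-1}B_w]$ from the block-triangular structure, take an unobservable direction $v$ of $(C,A)$, extend $CA^{k}v=0$ to all $k\geq 0$ via Cayley--Hamilton, and exhibit the padded vector $\tilde v=[v^{\top},\bm{0}_{n_q}^{\top}]^{\top}$ as an explicit nonzero element of $\ker\mathcal{O}_{(\mathbf{C},\mathbf{A})}$. The paper instead runs the contrapositive through the PBH rank test: assuming $\operatorname{r}[s_0I-A^{\top},C^{\top}]<n_s$ for some $s_0$, it bounds the rank of the extended PBH matrix $[s_0I-\mathbf{A};\,\mathbf{C}]$ below $n_s+n_q$, which forces a case split between $s_0\neq 0$ (where the $s_0I_{n_q}$ block has full rank and decouples) and $s_0=0$ (where a cruder column-block rank bound $\operatorname{r}[B_w]\leq n_q$ is used). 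Your argument buys a uniform treatment with no case analysis and a transparent witness vector; the paper's PBH version is the one you flagged as the ``equally short alternative,'' and its only subtlety is precisely the $s_0=0$ case that your Kalman-matrix route sidesteps. The single point you correctly identified as needing care --- invoking Cayley--Hamilton to pass from the $n_s$-step null condition to the $(n_s+n_q)$-step one --- is handled explicitly, so there is no gap.
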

    \begin{proof}
        The proof is given in Appendix \uppercase\expandafter{\romannumeral1}.
    \end{proof}}
    \textcolor{black}{According to Remark \ref{rm: keys for NO}, we {\color{black}hold} $(A,B,C)$ and $B_w$ as the knowledge to design the neural observer, as shown below:
    \begin{equation}
        \left\{
        \begin{aligned}
            \dot{\widehat{x}}_1&=B_w\widehat{x}_2 +A\widehat{x}_1+Bu+ \pi_{\theta_1}(\epsilon^{-1}(y-\widehat{y})),\ \widehat{x}_1(0)=\widehat{x}_{1,0},\\
            \dot{\widehat{x} }_2&=\epsilon^{-1}\pi_{\theta_2}(\epsilon^{-1}(y-\widehat{y})),\ \widehat{x}_2(0)=\widehat{x}_{2,0},\\
            \widehat{y}&=\mathbf{C}\bm{\widehat{x}},
        \end{aligned}  
    \right.
    \label{eq: nos for nonintegral-chain linear systems}
    \end{equation}
    where $\epsilon$ is positive, $\bm{\widehat{x}}=[\widehat{x}^{\top}_1,\widehat{x}^{\top}_2]^{\top}$.} In the end, a problem is raised accordingly:
    
    \begin{problem}
    We need to seek out the necessary condition for system \eqref{eq: nonintegral-chain systems} to be neural observable under Assumption \ref{ap: the uncertainty for nonintegral-chain systems}.
    \label{pro: d for nonintegral-chain systems}
    \end{problem}
\section{NN Representation and NN mapping vector}
    \label{sec: NN Representation and NN mapping vector}
    We will present the main theorems of this work in a later section: Theorems \ref{theo: LMI for LTI without control}-\ref{theo: LMI for nonintegral-chain systems}, which directly solve each of the problems mentioned above. As a necessary prelude, however, we introduce the following two definitions. The first definition is about the isolation of nonlinear activation function from the linear operation of NNs defined in \eqref{eq: resnets} and QCs for activation functions, similarly done in \cite{yin2022stability,fazlyab2022safety} and \cite{fazlyab2019efficient}, respectively. In the second, we define the concept of NN mapping vector and QCs for NN mapping vector.
\subsection{NN Isolation and QCs for Single NN mapping}
    For a specific NN mapping $\pi_{\theta}$ and the input $x\in\mathbf{R}^{\text{Input}}$, we define $w^{0}=x$ and $\xi^i=W^iw^{i-1}$, $w^i=\sigma^{[i]}(\xi^i),\ i=1,\cdots,L$. By collecting the input and output of all activation functions, we denote two $n_{\sigma}$ dimensional vectors $\xi_{\sigma}$ and $w_{\sigma}$ as follows:
    $$
    \xi_{\sigma}\triangleq\left[\begin{array}{c}
    \xi^{1} \\
    \vdots \\
    \xi^{L}
    \end{array}\right],  \ 
    w_{\sigma}\triangleq\left[\begin{array}{c}
    w^{1} \\
    \vdots \\
    w^{L}
    \end{array}\right].
    $$
    Then, by recalling that $\pi_{\theta}(x)=W^{L+1}w^{L}+W^{L+2}\pi_{\theta}^{[0]}(x)$ and $\pi_{\theta}^{[0]}(x)=x$, the NN mapping $\pi_{\theta}$ can be rewritten into
    \begin{equation}
    \left[
        \begin{array}{c}
            \pi_{\theta}\\
            \xi^1 \\
            \vdots \\
            \xi^L
        \end{array}
    \right]=
    \left[\begin{array}{c|cccc}
        W^{L+2} & O & O & \cdots & W^{L+1} \\
        \hline W^{1} & O & \cdots & O & O \\
        O & W^{2} & \cdots & O & O \\
        \vdots & \vdots & \ddots & \vdots & \vdots \\
        O & O & \cdots & W^{L} & O
        \end{array}\right]
    \left[
        \begin{array}{c}
            x\\
            w^1 \\
            \vdots \\
            w^L
        \end{array}
    \right],
    \label{eq: NN representation 0}
    \end{equation}
    which can be abbreviated in the following formulation:
    \begin{equation}
        \left[
            \begin{array}{c}
                \pi_{\theta}(x)\\
                \xi_{\sigma}
            \end{array}
        \right]=
        \left[
            \begin{array}{cc}
                N_{\pi x} & N_{\pi w} \\
                N_{\xi x} & N_{\xi w}
            \end{array}
        \right]
        \left[
            \begin{array}{c}
                x\\
                w_{\sigma}
            \end{array}
        \right]
        \label{eq: NN representation}
    \end{equation}
    Now, we define {\color{black}the two following linear mappings}:
    \begin{equation}
        R_{\pi} \triangleq
        \left[
            \begin{array}{cc}
                I&O \\
                N_{\pi x}&N_{\pi w}
                \end{array}
        \right],\quad
        R_{\xi} \triangleq
        \left[
            \begin{array}{cc}
                N_{\xi x}&N_{\xi w}\\
                O&I
                \end{array}
        \right],
        \label{eq: R_pi and R_xi}
    \end{equation}
    then derive the corresponding linear transformations for $[x^{\top},w_{\sigma}^{\top}]^{\top}$:
    $$
    \left[
        \begin{array}{c}
            x\\
            \pi_{\theta}(x)
        \end{array}
    \right]=R_{\pi}
    \left[
        \begin{array}{c}
            x\\
            w_{\sigma}
        \end{array}
    \right],\quad
    \left[
        \begin{array}{c}
            \xi_{\sigma}\\
            w_{\sigma}
        \end{array}
    \right]=R_{\xi}
    \left[
        \begin{array}{c}
            x\\
            w_{\sigma}
        \end{array}
    \right].
    $$
    To avoid confusion, we must emphasize that two identical matrices $I$ in $R_{\pi}$ and $R_{\xi}$ belong to different spaces of linear mappings, $\mathcal{L}(\mathbf{R}^{\text{Input}})$ and $\mathcal{L}(\mathbf{R}^{n_{\sigma}})$, respectively. 
    \begin{remark}
    Due to the existence of a shortcut connection shown in Fig. \ref{fig: resnets}, the matrix $N_{\pi x}$ in \eqref{eq: R_pi and R_xi} is a non-zero matrix that makes a difference with \cite{yin2022stability}. In addition, the non-zero matrix $N_{\pi x}$ plays a crucial role in neural observability, which would be certified in Proposition \ref{prop: existence of solution of LMI}.
    \end{remark}
    
    Next, we deal with another thorny difficulty in analyzing NNs, which is the composition of nonlinear activation functions. The key is to remove the non-linearity of activation functions but preserve some geometrical properties.
    
    Consider the activation function $\sigma(\cdot)\in C(\mathbf{R}|\sigma(0)=0)$, then, the function is said to be sector bounded in sector $[\alpha,\beta]$ with $\alpha\leq \beta< \infty$ if the following inequality holds for all $s\in\mathbf{R}$:
    $$
    \left(\sigma(s)-\alpha s \right) \left(\beta s-\sigma(s)\right) \geq 0.
    $$
    Intuitively, the above inequality implies that the function $y = \sigma(s)$ lies in the open region of $y=\alpha s$, $y=\beta s$, and the origin. For the sector bounded, as mentioned earlier, the nonlinear functions commonly used in practice \cite{zhao2017nonlinear} are of the following form:
    $$
    \operatorname{fal}(s, \gamma, \delta)=\left\{\begin{array}{l}
            |s|^{\gamma} \operatorname{sgn}(s),\ |s|>\delta, \\
            \frac{s}{\delta^{1-\gamma}},\ |s| \leq \delta,
            \end{array}\right.\ 
    $$
    which also satisfies the sector boundedness illustrated in Fig. \ref{fig: sector on fal and G}.
    \begin{figure}[t]
        \centering
        \scalebox{0.7}{
           \begin{tikzpicture}
                \centering
                \draw[<->, ultra thick](\num,0)node[below] {$s$}--(0,0)--(0,\num-1)node[above] {$\operatorname{y}(s)$};
                \draw[ultra thick](-\num,0)--(0,0)--(0,-\num+2);
                \draw[heavy_blue,domain=-0.5:0.5,ultra thick] plot(\x,{1/(sqrt(0.5))*\x});
                \draw [heavy_blue,domain=0.5:4,ultra thick,samples=100] plot(\x,{(\x)^(0.5)})node[above]{$y=\operatorname{fal}(s, \gamma, \delta)$};
                \draw [heavy_blue,domain=-4:-0.5,ultra thick,samples=100] plot(\x,{-(-\x)^(0.5)});
                \draw [heavy_purple,domain=-1:1.5,ultra thick,samples=50] plot(\x,{((2*\x)+0.01)})node[below,right]{$y=\operatorname{\beta}s$};
                \draw [rose_red ,domain=-4:4,ultra thick,samples=50] plot(\x,0.05)node[above]{$y=\operatorname{\alpha}s$};
                \draw [heavy_blue,ultra thick,densely dashed] (0.5,0)node[below]{$\delta$}--(0.5,0.7);
                \draw [heavy_blue,ultra thick,densely dashed] (-0.5,0)node[above]{$-\delta$}--(-0.5,-0.7);
            \end{tikzpicture} 
        }
        \caption{Sector boundedness for $\operatorname{fal}(s, \gamma, \delta)$: The function $\operatorname{fal}(s, \gamma, \delta)$ is sector bounded with $[\alpha,\beta]$, where we set $\alpha=0$ and $\beta\geq \delta^{\gamma-1}$.}
        \label{fig: sector on fal and G}
    \end{figure}
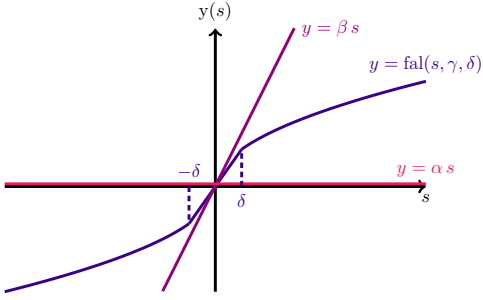
    In other words, when we take the NN in the neural observer as a single layer, with no shortcut connection ($W^{L+2}=O$), and use the $\operatorname{fal}(s, \gamma, \delta)$ function as the activation function, in this sense, then the extended state observers with the $\operatorname{fal}(s, \gamma, \delta)$ function can be included in neural observers. 
    
    Next, the activation functions $\sigma(\cdot)$ at each hidden layer are sector bounded in sector $[\alpha_i,\beta_i],i=1,\cdots,n_{\sigma}$, respectively. By denoting sector vectors $\alpha_{{\sigma}}=[\alpha_1,\cdots,\alpha_{n_{\sigma}}]$ and $\beta_{{\sigma}}=[\beta_1,\cdots,\beta_{n_{\sigma}}]$, the QCs for one NN mapping are provided as follows:
    \begin{lemma}[\cite{yin2022stability}]
    Let $\alpha_{{\sigma}}, \beta_{{\sigma}} \in \mathbf{R}^{n_{\sigma}}$ be defined above with $\alpha_{n_{\sigma}} \leq \beta_{n_{\sigma}}$. If $\lambda_{\sigma} \in \mathbf{R}^{n_{\sigma}}$ and $\lambda_{\sigma} \geq \bm{0}_{n_{\sigma}}$, then:
    \begin{equation*}
        \begin{gathered}{
            \left[\begin{array}{c}
            \xi_{\sigma} \\
            w_{\sigma}
            \end{array}\right]^{\top} \Psi_{\sigma}^{\top} M_{\sigma}(\lambda_{\sigma}) \Psi_{\sigma}\left[\begin{array}{c}
            \xi_{\sigma} \\
            w_{\sigma}
            \end{array}\right] \geq 0
            }
        \end{gathered}    
    \end{equation*}
    where $\bm{0}_{\sigma}$ is a zero vector,
    \begin{equation}
        \begin{aligned}
            \Psi_{\sigma}\triangleq&\left[\begin{array}{cc}\operatorname{diag}\left(\beta_{\sigma}\right) & -I \\ -\operatorname{diag}\left(\alpha_{\sigma}\right) & I\end{array}\right],\\
            M_{\sigma}(\lambda_{\sigma})\triangleq&\left[\begin{array}{cc}O & \operatorname{diag}(\lambda_{\sigma}) \\ \operatorname{diag}(\lambda_{\sigma}) & O\end{array}\right].
        \end{aligned}
        \label{eq: Psi_sigma and M_sigma}
    \end{equation}
    \label{lem: QC for one mapping}
    \end{lemma}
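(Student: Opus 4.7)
The plan is to reduce the quadratic form to a weighted sum of the per-neuron sector inequalities, which by assumption are each non-negative. First I would make the structure of the matrix product explicit. Computing $\Psi_\sigma[\xi_\sigma^\top, w_\sigma^\top]^\top$ directly from the block definition in \eqref{eq: Psi_sigma and M_sigma} gives a stacked vector whose top block is $\operatorname{diag}(\beta_\sigma)\xi_\sigma - w_\sigma$ and whose bottom block is $w_\sigma - \operatorname{diag}(\alpha_\sigma)\xi_\sigma$. Write these as $a$ and $b$ respectively, so the quadratic form becomes $[a^\top, b^\top] M_\sigma(\lambda_\sigma) [a^\top, b^\top]^\top$.

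Next I would exploit the off-diagonal block structure of $M_\sigma(\lambda_\sigma)$: since its only non-zero blocks are $\operatorname{diag}(\lambda_\sigma)$ in the $(1,2)$ and $(2,1)$ positions, the form collapses to $a^\top \operatorname{diag}(\lambda_\sigma) b + b^\top \operatorname{diag}(\lambda_\sigma) a = 2\sum_{i=1}^{n_\sigma} \lambda_i\, a_i b_i$. Substituting back the entries of $a$ and $b$ yields
\begin{equation*}
\begin{bmatrix}\xi_\sigma\\ w_\sigma\end{bmatrix}^\top \Psi_\sigma^\top M_\sigma(\lambda_\sigma)\Psi_\sigma\begin{bmatrix}\xi_\sigma\\ w_\sigma\end{bmatrix}
= 2\sum_{i=1}^{n_\sigma} \lambda_i (\beta_i\xi_i - w_i)(w_i - \alpha_i\xi_i).
\end{equation*}

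The final step is to invoke the sector-bound hypothesis neuron by neuron. By construction $w_i$ is the output of an activation function applied to $\xi_i$, i.e.\ $w_i=\sigma(\xi_i)$ with $\sigma$ sector bounded in $[\alpha_i,\beta_i]$. The sector inequality $(\sigma(\xi_i)-\alpha_i\xi_i)(\beta_i\xi_i-\sigma(\xi_i))\geq 0$ is exactly $(w_i-\alpha_i\xi_i)(\beta_i\xi_i - w_i)\geq 0$, so each summand $(\beta_i\xi_i - w_i)(w_i-\alpha_i\xi_i)$ is non-negative. Combined with $\lambda_i\geq 0$ for every $i$, the whole sum is non-negative, proving the claim.

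There is no real obstacle here: the argument is a bookkeeping exercise, and the only care required is in matching the sign conventions between the factored form $(w_i-\alpha_i\xi_i)(\beta_i\xi_i-w_i)$ coming from the sector definition and the product $a_i b_i$ produced by the block structure of $\Psi_\sigma^\top M_\sigma \Psi_\sigma$. I would also note in passing that the lemma allows $\alpha_i,\beta_i$ to vary per neuron (the activation function itself is the same, but the sector need not be tight), which is exactly what makes the associated $\lambda_i$'s free multipliers in the subsequent LMI formulations.
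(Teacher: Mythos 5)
Your proof is correct and follows essentially the same route as the paper: the paper establishes this bound (in the proof of its Lemma 2, of which Lemma 1 is the $K=1$ case) by expanding the quadratic form into the weighted sum $\sum_i \lambda_{\sigma,i}\left(w_{\sigma,i}-\alpha_{\sigma,i}\xi_{\sigma,i}\right)\left(\beta_{\sigma,i}\xi_{\sigma,i}-w_{\sigma,i}\right)$ and invoking the per-neuron sector inequality with $\lambda_{\sigma}\geq \bm{0}_{n_{\sigma}}$, exactly as you do. Your explicit bookkeeping of the blocks $a$, $b$ and the factor of $2$ from the off-diagonal structure of $M_{\sigma}(\lambda_{\sigma})$ is a slightly more detailed rendering of the same computation.
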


\subsection{NN Isolation and QCs for NN mapping vector}
    Whereafter, we try to isolate the non-linearity of NN mapping vector $\bm{\pi_{\theta}}(x)=[\pi^{\top}_{\theta_1}(x),\cdots,\pi^{\top}_{\theta_{K}}(x)]^{\top}$ shown in Fig. \ref{fig: NN mapping vector}, which consists of $K$ NN mappings with different parameters $\theta_i$. 
    \tikzstyle{block} = [draw, fill=blue!20, rectangle, minimum height=3pt, minimum width=6pt]
    \tikzstyle{input} = [coordinate]
    \tikzstyle{output} = [coordinate]
    \tikzstyle{layer} = [coordinate]
    \tikzstyle{pinstyle} = [pin edge={to-, very thick, black}]
    
    \begin{figure}[t]
        \centering
        \scalebox{0.8}{
            \begin{tikzpicture}[auto, >=latex']
                \node [input] (input_1) at (0, 1.5) {};
                \node [block, right of=input_1, node distance=4cm] (NN_1) {\NNsimple};
                \draw [pinstyle,-latex] (input_1) --node[above]{$T_1^1$} (NN_1);
                \node [right of=NN_1, node distance=3cm] (output_1) {$\pi_{\theta_1}(x)$};
                \draw [pinstyle,-latex] (NN_1) --node[above]{$T_1^2$} (output_1);
                \draw (NN_1)  node [yshift=-0.45cm, xshift=-1.5cm] {$\pi_{\theta_1}$};
                \draw (NN_1)  node [yshift=0.8cm] {$L_1$ layers};

                \node [input] (input_2) at (0, -1.5) {};
                \node [block, right of=input_2, node distance=4cm] (NN_2) {\NNsimple};
                \draw [pinstyle,-latex] (input_2) --node[above]{$T_K^1$} (NN_2);
                \node [right of=NN_2, node distance=3cm] (output_2) {$\pi_{\theta_K}(x)$};
                \draw [pinstyle,-latex] (NN_2) --node[above]{$T_K^2$} (output_2);
                \draw (NN_2)  node [yshift=-0.45cm, xshift=-1.5cm] {$\pi_{\theta_K}$};
                \draw (NN_2)  node [yshift=-0.8cm] {$L_K$ layers};

                \node (input) at (0,0) {$x$};
                \draw [dashed, very thick, -latex] (input) -- node[name=x_1] {\textcolor{blue}{input}} (input_1);
                \draw [dashed, very thick, -latex] (input) -- node[name=x_2,left] {\textcolor{blue}{input}} (input_2);

                \node (output) at (7,0) {$\bm{\pi_{\theta}}(x)=[\pi^{\top}_{\theta_1}(x),\cdots,\pi^{\top}_{\theta_{K}}(x)]^{\top}$};
                \node (series) at (4,.3) {$\vdots$};
                \node (series) at (4,-.3) {$\vdots$};
                \draw [dashed, very thick, -latex] (output_1) -- (output);
                \draw [dashed, very thick, -latex] (output_2) -- (output);
            \end{tikzpicture}
        }
        \caption{The diagram of NN mapping vector}
        \label{fig: NN mapping vector}
    \end{figure}
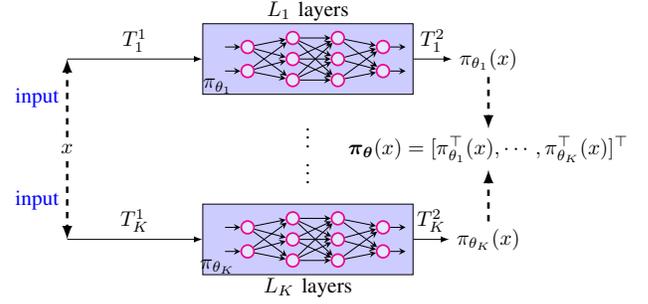
   
    We denote $w_k^{0}=x$ and $\xi_k^i=W_k^iw_k^{i-1}$, $w_k^i=\sigma^{[i]}(\xi_k^i),\ i=1,\cdots,L_k,\ k=1,\cdots,K$, and two $n_{\sigma_k}\triangleq \sum_{i=1}^{L_k} n_i$ dimensional vectors $\xi_{\sigma_k}$ and $w_{\sigma_k}$ as follows:
    $$
    \xi_{\sigma_k}\triangleq\left[\begin{array}{c}
    \xi_k^{1} \\
    \vdots \\
    \xi_k^{L_k}
    \end{array}\right],  \ 
    w_{\sigma_k}\triangleq\left[\begin{array}{c}
    w_k^{1} \\
    \vdots \\
    w_k^{L_k}
    \end{array}\right].
    $$
    With the help of \eqref{eq: NN representation}, we derive the following transformation 
    \begin{equation*}
        \left[
            \begin{array}{c}
                \pi_{\theta_k}(x)\\
                \xi_{\sigma_k}
            \end{array}
        \right]=
        \underbrace{\left[
            \begin{array}{cc}
                N_{\pi_k x} & N_{\pi_k w_k} \\
                N_{\xi_k x} & N_{\xi_k w_k}
            \end{array}
        \right]}_{N_k}
        \left[
            \begin{array}{c}
                x\\
                w_{\sigma_k}
            \end{array}
        \right],
    \end{equation*}
    $$
    N_k=
    \left[\begin{array}{c|cccc}
        T^2_k W_k^{L_k+2} T^1_k & O & O & \cdots & T^2_kW_k^{L_k+1} \\
        \hline W_k^{1}T^1_k & O & \cdots & O & O \\
        O & W_k^{2} & \cdots & O & O \\
        \vdots & \vdots & \ddots & \vdots & \vdots \\
        O & O & \cdots & W_k^{L_k} & O
        \end{array}\right].
    $$
    We suppose $\xi^{K}_{\sigma}=[\xi^{\top}_{\sigma_1},\cdots,\xi^{\top}_{\sigma_K}]^{\top}$ and $w^{K}_{\sigma}=[w^{\top}_{\sigma_1},\cdots,w^{\top}_{\sigma_K}]^{\top}$, and have
    \begin{equation}
    \left[
        \begin{array}{c}
            \bm{\pi_{\theta}}(x)\\
            \xi^{K}_{\sigma}
        \end{array}
    \right]=
    \left[
        \begin{array}{cc}
            \widehat{N_{\bm{\pi}x}}  & \widehat{N_{\bm{\pi} w}}  \\
            \widehat{N_{\xi x}}  & \widehat{N_{\xi w}} 
        \end{array}
    \right]
    \left[
        \begin{array}{c}
            x\\
            w^{K}_{\sigma}
        \end{array}
    \right],
    \label{eq: NN representation_2}
    \end{equation}
    where the block matrices equal to
    $$
    \widehat{N_{\bm{\pi}x}}=\left[\begin{array}{c}
    N_{\pi_1 x} \\
    \vdots \\
    N_{\pi_K x}
    \end{array}\right],\ 
    \widehat{N_{\bm{\pi} w}}=\operatorname{diag}\left(N_{\pi_1 w_1},\cdots,N_{\pi_K w_K}\right),
    $$
    $$
    \widehat{N_{\xi x}}=\left[\begin{array}{c}
    N_{\xi_1 x} \\
    \vdots \\
    N_{\xi_K x}
    \end{array}\right],\ 
    \widehat{N_{\xi w}}=\operatorname{diag}\left(N_{\xi_1 w_1},\cdots,N_{\xi_K w_K}\right).
    $$
    Then, it is easy to verify that the following transformations derived from \eqref{eq: NN representation_2} are held on:
    $$
    \left[
        \begin{array}{c}
            x\\
            \bm{\pi_{\theta}}(x)
        \end{array}
    \right]=\widehat{R_{\bm{\pi}}}
    \left[
        \begin{array}{c}
            x\\
            w^{K}_{\sigma}
        \end{array}
    \right],\quad
    \left[
        \begin{array}{c}
            \xi^{K}_{\sigma}\\
            w^{K}_{\sigma}
        \end{array}
    \right]=\widehat{R_{\xi}}
    \left[
        \begin{array}{c}
            x\\
            w^{K}_{\sigma}
        \end{array}
    \right],
    $$
    where 
    \begin{equation}
        \widehat{R_{\bm{\pi}}} \triangleq
        \left[
            \begin{array}{cc}
                I&O \\
                \widehat{N_{\bm{\pi} x}}&\widehat{N_{\bm{\pi} w}}
                \end{array}
        \right],\quad
        \widehat{R_{\xi}} \triangleq
        \left[
            \begin{array}{cc}
                \widehat{N_{\xi x}}&\widehat{N_{\xi w}}\\
                O&I
                \end{array}
        \right].
        \label{eq: R_pi and R_xi 2}
    \end{equation}
    Afterward, by gathering each sector vector $\alpha_{\sigma_k},\ \beta_{\sigma_k}$ from $\pi_{\theta_k}(x)$ and making $n_{\sigma}^K=\sum_{k=1}^K n_{\sigma_k}$, $\alpha_{\sigma}^K=[\alpha^{\top}_{\sigma_1},\cdots,\alpha^{\top}_{\sigma_K}]^{\top}$, $\beta_{\sigma}^K=[\beta^{\top}_{\sigma_1},\cdots,\beta^{\top}_{\sigma_K}]^{\top}$, we take a vector $\lambda^K_{\sigma}\in\mathbf{R}^{n_{\sigma}^K}$ with positive components and the following matrices with the parameter $K$ 
    \begin{equation}
        \begin{aligned}
            \mathbf{\Psi}_{\sigma}(\bm{\alpha},\bm{\beta};K)\triangleq&\left[\begin{array}{cc}\operatorname{diag}\left(\beta^K_{\sigma}\right) & -I \\ -\operatorname{diag}\left(\alpha^K_{\sigma}\right) & I\end{array}\right],\\
            \mathbf{M}_{\sigma}(\lambda^K_{\sigma};K)\triangleq&\left[\begin{array}{cc}O & \operatorname{diag}(\lambda^K_{\sigma}) \\ \operatorname{diag}(\lambda^K_{\sigma}) & O\end{array}\right].
        \end{aligned}
        \label{eq: Psi_sigma and M_sigma 2}
    \end{equation}
    {\color{black} Later, $\mathbf{\Psi}_{\sigma}(\bm{\alpha},\bm{\beta};K)$ and $ \mathbf{M}_{\sigma}(\lambda^K_{\sigma};K)$ are simply denoted as $\mathbf{\Psi}(K)$ and $\mathbf{M}(K)$, respectively.} Then, it suffices to show the following QC for NN mapping vector:
    \begin{lemma}[$K$ Law of Quadratic Constraint]
    For any $\lambda^K_{\sigma}\geq \bm{0}_{n_{\sigma}^K}$, $\xi_{\sigma}^K$ and $w_{\sigma}^K$ defined in \eqref{eq: NN representation_2}, matrices $\mathbf{\Psi}_{\sigma}(\bm{\alpha},\bm{\beta};K)$ and $ \mathbf{M}_{\sigma}(\lambda^K_{\sigma};K)$ defined in \eqref{eq: Psi_sigma and M_sigma 2}, the QC is held for NN mapping vector $\bm{\pi_{\theta}}$ consisting of $K$ NN mappings
    $$
    \begin{gathered}{
        \left[\begin{array}{c}
        \xi^K_{\sigma} \\
        w^K_{\sigma}
        \end{array}\right]^{\top} \mathbf{\Psi}(K)^{\top} \mathbf{M}(K) \mathbf{\Psi}(K)\left[\begin{array}{c}
        \xi^K_{\sigma} \\
        w^K_{\sigma}
        \end{array}\right] \geq 0.
        }
    \end{gathered}   
    $$
    \label{lem: QC for K mappings}
    \end{lemma}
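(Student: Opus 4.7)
My plan is to reduce the claim to $K$ independent applications of Lemma \ref{lem: QC for one mapping}. The key structural observation is that $\mathbf{\Psi}(K)$ and $\mathbf{M}(K)$ built in \eqref{eq: Psi_sigma and M_sigma 2} have exactly the same block pattern as their single-mapping counterparts $\Psi_{\sigma}$ and $M_{\sigma}$ in \eqref{eq: Psi_sigma and M_sigma}; only the diagonal blocks $\operatorname{diag}(\alpha^{K}_{\sigma})$, $\operatorname{diag}(\beta^{K}_{\sigma})$, $\operatorname{diag}(\lambda^{K}_{\sigma})$ have grown. In particular, no cross terms between activations of different sub-networks are introduced, so the quadratic form decouples across the $K$ sub-NNs.

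First I would multiply out $\mathbf{\Psi}(K)^{\top}\mathbf{M}(K)\mathbf{\Psi}(K)$ by direct block multiplication. Since every factor is (block-)diagonal, the product collapses to a symmetric matrix whose four blocks are diagonal, and after simplification the induced quadratic form reduces coordinatewise to
\[
\begin{bmatrix}\xi^{K}_{\sigma}\\ w^{K}_{\sigma}\end{bmatrix}^{\top}\mathbf{\Psi}(K)^{\top}\mathbf{M}(K)\mathbf{\Psi}(K)\begin{bmatrix}\xi^{K}_{\sigma}\\ w^{K}_{\sigma}\end{bmatrix} = 2\sum_{i=1}^{n_{\sigma}^{K}}\lambda^{K}_{\sigma,i}\bigl(w^{K}_{\sigma,i}-\alpha^{K}_{\sigma,i}\xi^{K}_{\sigma,i}\bigr)\bigl(\beta^{K}_{\sigma,i}\xi^{K}_{\sigma,i}-w^{K}_{\sigma,i}\bigr).
\]
This is exactly the elementwise formula that underlies Lemma \ref{lem: QC for one mapping}; only the length of the index set has changed.

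Next I would partition $\{1,\ldots,n_{\sigma}^{K}\}$ into $K$ contiguous groups matching the activation coordinates of $\pi_{\theta_1},\ldots,\pi_{\theta_K}$, according to the ordering fixed by $\xi^{K}_{\sigma}=[\xi_{\sigma_1}^{\top},\ldots,\xi_{\sigma_K}^{\top}]^{\top}$ and $w^{K}_{\sigma}=[w_{\sigma_1}^{\top},\ldots,w_{\sigma_K}^{\top}]^{\top}$. On the $k$-th group, the partial sum agrees with the single-mapping quadratic form obtained by applying Lemma \ref{lem: QC for one mapping} to $(\xi_{\sigma_k},w_{\sigma_k})$ with multiplier vector $\lambda_{\sigma_k}\geq\bm{0}$, namely the corresponding slice of $\lambda^{K}_{\sigma}$. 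Each block contribution is therefore non-negative by Lemma \ref{lem: QC for one mapping}, and summing the $K$ of them yields the desired inequality.

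I do not anticipate a substantive obstacle; the proof is essentially bookkeeping. The only care needed is to confirm that the concatenation pattern fixed in \eqref{eq: NN representation_2} genuinely pairs coordinate $i$ of $w^{K}_{\sigma}$ with $\sigma(\xi^{K}_{\sigma,i})$ for the same scalar sector-bounded activation $\sigma$, so that the single-variable sector inequality $(\sigma(s)-\alpha s)(\beta s-\sigma(s))\geq 0$ can be invoked termwise. Once that correspondence is verified, Lemma \ref{lem: QC for one mapping} applies blockwise and the claim follows immediately.
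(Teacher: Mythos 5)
Your proposal is correct and follows essentially the same route as the paper: expand the quadratic form into a sum of terms $\lambda_{\sigma_k,i}\,(w_{\sigma_k,i}-\alpha_{\sigma_k,i}\xi_{\sigma_k,i})(\beta_{\sigma_k,i}\xi_{\sigma_k,i}-w_{\sigma_k,i})$, each nonnegative by the scalar sector bound, and sum over the $K$ blocks. Your explicit block multiplication (including the factor of $2$) is if anything slightly more careful than the paper's one-line computation.
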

    \begin{proof}
        The proof is given in Appendix \uppercase\expandafter{\romannumeral2}.
    \end{proof}
    
    Notice that as $K=1$, Lemma \ref{lem: QC for one mapping} is a case of Lemma \ref{lem: QC for K mappings}. In the relevant sections below, we would pre-state the value of the parameter $K$ and the input $x$ for NN mapping vector.
    
\section{Main Result}
\label{sec: Main Result}
    The approach in the previous section can be summarized as follows: by using isolation of non-linearity for {\color{black}an NN} mapping and QCs for activation functions, we extend the characterization of a single NN mapping to NN mapping vector composed of several NN mappings. In this section, we utilize this approach to answer the questions proposed in Section \ref{sec: problem formulation}. 
\subsection{Neural Observers for Systems without Uncertainty}
    For the intuitiveness and simplicity of the arguments, we start analyzing of the neural observability of linear systems. First, we formally state our main result for Problem \ref{pro: no for linear systems} in the following theorem.
    \begin{theorem}
   We consider {\color{black}an NN} mapping $\pi_{\theta}$ with $\theta$ and a vector $\lambda_{\sigma}$ that satisfies the quadratic constraint in Lemma \ref{lem: QC for one mapping}. We update $W^{L+2}$ and $W^1$ in \eqref{eq: R_pi and R_xi} to $W^{L+2}C$ and $W^1C$, respectively. If there exists a matrix $P\in\mathcal{L}(\mathbf{R}^{n_s})$ and $P\succ O$ such that 
    \begin{equation}
            R_{\pi}^{\top}
            \left[
                \begin{array}{cc}
                    A^{\top}P + PA & P\\
                    P&O
                \end{array}
            \right]R_{\pi}+
            R_{\xi}^{\top}\Psi_{\sigma}^{\top} M_{\sigma}(\lambda_{\sigma}) \Psi_{\sigma}R_{\xi} \prec O,
            \label{eq: LMI for LTI without control}
        \end{equation}
        then the LTI system \eqref{eq: linear systems} is {\color{black}neural exponentially observable}, equivalently, 
        $$
        \|x(t)-\widehat{x}(t)\|_2\leq M\exp^{-\kappa t}\{\|x(0)\|_2+\|\widehat{x}(0)\|_2\},
        $$
        where $R_{\pi}$, $R_{\xi}$, and $\Psi_{\sigma}$, $M_{\sigma}(\lambda_{\sigma})$ are defined in \eqref{eq: R_pi and R_xi}, and \eqref{eq: Psi_sigma and M_sigma}, respectively.
        \label{theo: LMI for LTI without control}
    \end{theorem}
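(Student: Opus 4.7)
The plan is to prove exponential convergence by constructing a quadratic Lyapunov function for the estimation error and showing its time derivative is strictly negative definite by combining the supplied LMI with the quadratic constraint for the activation functions.

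First, I would introduce the error variable $e(t)=x(t)-\widehat{x}(t)$ and derive its dynamics. Subtracting the observer equation \eqref{eq: nos for linear systems} from the plant \eqref{eq: linear systems}, the control term $Bu$ cancels and, since $y-\widehat{y}=Ce$, I obtain the autonomous nonlinear error system $\dot{e}=Ae-\pi_{\theta_1}(Ce)$. The key observation (explaining why $W^{1}$ and $W^{L+2}$ are absorbed with $C$) is that viewing $\pi_{\theta_1}(Ce)$ as a map of the argument $e$, the isolation of nonlinearity in Section \ref{sec: NN Representation and NN mapping vector} applies verbatim after the substitutions $W^{1}\!\mapsto\!W^{1}C$ and $W^{L+2}\!\mapsto\!W^{L+2}C$, so that
$\bigl[e^\top,\pi_{\theta_1}(Ce)^\top\bigr]^\top = R_\pi\bigl[e^\top,w_\sigma^\top\bigr]^\top$
and $\bigl[\xi_\sigma^\top,w_\sigma^\top\bigr]^\top = R_\xi\bigl[e^\top,w_\sigma^\top\bigr]^\top$.

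Next I would take the Lyapunov candidate $V(e)=e^\top P e$ with the $P\succ O$ provided by the LMI, and compute $\dot V = e^\top(A^\top P+PA)e+2e^\top P\,(-\pi_{\theta_1}(Ce))$ along the error dynamics. Assembling this into the augmented vector $\bigl[e^\top,w_\sigma^\top\bigr]^\top$ via $R_\pi$, one sees that
\[
\dot V = \begin{bmatrix} e \\ w_\sigma\end{bmatrix}^{\!\top} R_\pi^\top\!\begin{bmatrix} A^\top P+PA & P \\ P & O\end{bmatrix}\! R_\pi \begin{bmatrix} e \\ w_\sigma\end{bmatrix},
\]
up to the sign convention encoded by the authors (the minus sign from $-\pi_{\theta_1}$ can be absorbed into the output weights, which is the standard device used in this QC framework). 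I would then invoke Lemma \ref{lem: QC for one mapping}: since $\lambda_\sigma\geq \bm 0_{n_\sigma}$ and $[\xi_\sigma;w_\sigma]=R_\xi[e;w_\sigma]$, the quadratic form $\bigl[e^\top,w_\sigma^\top\bigr]R_\xi^\top \Psi_\sigma^\top M_\sigma(\lambda_\sigma)\Psi_\sigma R_\xi\bigl[e^\top,w_\sigma^\top\bigr]^\top$ is non-negative. Adding this non-negative term to $\dot V$ and invoking the hypothesis \eqref{eq: LMI for LTI without control}, strictness of the LMI yields an $\eta>0$ with
\[
\dot V \;\le\; -\eta\,\bigl\|[e^\top,w_\sigma^\top]^\top\bigr\|_2^{\,2} \;\le\; -\eta\,\|e\|_2^{\,2}.
\]

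Finally I would convert this dissipation inequality into the required exponential bound. Using $\lambda_{\min}(P)\|e\|_2^2 \le V(e)\le \lambda_{\max}(P)\|e\|_2^2$, the estimate becomes $\dot V \le -\frac{\eta}{\lambda_{\max}(P)}V$, so by a standard Comparison/Gronwall argument $V(e(t))\le V(e(0))\exp\!\bigl(-\tfrac{\eta}{\lambda_{\max}(P)}t\bigr)$. Extracting the norm and using the triangle inequality $\|e(0)\|_2\le\|x(0)\|_2+\|\widehat{x}(0)\|_2$ delivers the claim with explicit constants $\kappa = \eta/(2\lambda_{\max}(P))$ and $M=\sqrt{\lambda_{\max}(P)/\lambda_{\min}(P)}$.

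The only delicate step I foresee is the bookkeeping in the first paragraph, namely verifying that substituting $W^{1}\!\mapsto\!W^{1}C$ and $W^{L+2}\!\mapsto\!W^{L+2}C$ in the defining relation \eqref{eq: NN representation} correctly encodes the composition $\pi_{\theta_1}\circ C$ without altering the QC structure inherited from Lemma \ref{lem: QC for one mapping}; everything else (Lyapunov differentiation, S-procedure-style addition of the non-negative QC, and the Gronwall step) is routine once that identification is in place.
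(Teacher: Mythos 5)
Your proposal is correct and follows essentially the same route as the paper's proof: form the error dynamics, absorb $C$ into $W^1$ and $W^{L+2}$, take $V=e^\top Pe$, add the non-negative quadratic constraint from Lemma \ref{lem: QC for one mapping}, invoke strictness of the LMI, and finish with Gronwall, arriving at the same constants $M$ and $\kappa$. The only cosmetic difference is your sign convention $e=x-\widehat{x}$ (the paper takes $e=\widehat{x}-x$, which makes the error dynamics $\dot e=Ae+\pi_{\theta_1}(Ce)$ and removes the minus sign you had to absorb), which you correctly identify as harmless bookkeeping.
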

    \begin{proof}
        {\color{black} The proof is provided in Appendix \uppercase\expandafter{\romannumeral3}.}
    \end{proof}
    
    Furthermore, it is not difficult to imply that the neural exponential observability of system \eqref{eq: linear systems} and the existence of $\theta$ in $\pi_{\theta}$ depend heavily on the existence of $P$, i.e., the solution of LMI \eqref{eq: LMI for LTI without control}. Hence, a natural sub-question is: under what conditions does the solution $P$ in LMI exist? To our best knowledge, this question has not been effectively solved in the NN-based closed-loop control (for example, \cite[Theorem 1]{yin2022stability} and \cite[Theorem 1]{pauli2021offset}) at present. Therefore, we present the following proposition to answer this sub-question.
    
    \begin{proposition}
        We set $\alpha_{\sigma}=\mathbf{0}_{n_{\sigma}}$, $\widetilde{A}=A+N_{\pi x}$ with $N_{\pi x}=W^{L+2}C$, $Q\in \mathcal{L}(\mathbf{R}^{n_s})$ is a diagonal matrix with positive diagonal entries, and $P=\int_0^{\infty}e^{\widetilde{A}^{\top}t}Qe^{\widetilde{A}t}\mathrm{d}t$. We suppose that there exists $\lambda_{\sigma} > \bm{0}_{n_{\sigma}}$ such that 
        \begin{itemize}
            \item[(\romannumeral1)] $\|M_1\|_{\infty}\leq \min_{i}(q_{i})$, where $M_1=-PN_{\pi w}-N_{\xi x}^{\top}R_1$ with $R_1=\operatorname{diag}(\lambda_{\sigma} \circ \beta_{\sigma})\footnote{``$\circ$" represents the Hadamard product}$, and $q_{i}$ is the $i_{\text{th}}$ diagonal entry in $Q$,
            \item[(\romannumeral2)] $\|M_1^{\top}\|_{\infty} + \|M_2\|_{\infty}\leq 2\min_{i}(\lambda_{\sigma,i})$, where $\lambda_{\sigma,i}$ is the $i_{\text{th}}$ element of $\lambda_{\sigma}$, $M_2 = R_1N_{\xi w}+N_{\xi w}^{\top}R_1$.
        \end{itemize}
        Then LMI (\ref{eq: LMI for LTI without control}) has a solution $P$ if and only if $(C,A)$ is observable.
        \label{prop: existence of solution of LMI}
    \end{proposition}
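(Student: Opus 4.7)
The proof naturally splits into the two directions of the biconditional, with sufficiency being a constructive calculation and necessity requiring a structural/PBH argument.

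For the \textbf{sufficiency} direction ($(C,A)$ observable $\Rightarrow$ an admissible $P$ exists), my plan is as follows. First, since $(C,A)$ is observable, the dual to the pole-placement theorem allows one to choose the shortcut weight $W^{L+2}$ (hence $N_{\pi x}=W^{L+2}C$) so that $\widetilde{A}=A+N_{\pi x}$ is Hurwitz with arbitrary prescribed spectrum. Then, for any diagonal $Q\succ O$ with positive diagonal entries, the proposed integral
\[
P=\int_{0}^{\infty} e^{\widetilde{A}^{\top}t}\,Q\,e^{\widetilde{A}t}\,\mathrm{d}t
\]
converges to a symmetric positive-definite matrix that satisfies the continuous-time Lyapunov identity $\widetilde{A}^{\top}P+P\widetilde{A}=-Q$. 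Next, substituting this $P$ and $\alpha_{\sigma}=\bm{0}$ into \eqref{eq: LMI for LTI without control} and expanding $R_{\pi}^{\top}[\cdot]R_{\pi}$ and $R_{\xi}^{\top}\Psi_{\sigma}^{\top}M_{\sigma}(\lambda_{\sigma})\Psi_{\sigma}R_{\xi}$ with the aid of \eqref{eq: R_pi and R_xi} and \eqref{eq: Psi_sigma and M_sigma}, the Lyapunov identity collapses the $(1,1)$ block to $-Q$ and the remaining terms assemble into
\[
\Phi=\begin{bmatrix} -Q & -M_{1}\\ -M_{1}^{\top} & M_{2}-2\operatorname{diag}(\lambda_{\sigma})\end{bmatrix},
\]
so the claim reduces to $\Phi\prec O$, equivalently $-\Phi\succ O$. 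I would establish this final inequality by a row-wise strict diagonal-dominance (Gershgorin) argument: condition $(\romannumeral1)$ forces each diagonal entry $q_{i}$ of $Q$ to strictly dominate the row-sum contribution of $M_{1}$, while condition $(\romannumeral2)$ forces each $2\lambda_{\sigma,i}$ to strictly dominate the combined row-sums of the in-block entries of $M_{2}$ and the cross-block entries of $M_{1}^{\top}$; strict diagonal dominance with positive diagonal then yields positive definiteness.

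For the \textbf{necessity} direction ($P$ solves \eqref{eq: LMI for LTI without control} $\Rightarrow$ $(C,A)$ observable), my plan is to extract the principal $(1,1)$ block of the LMI after the same expansion. The block $-Q$ was only produced by invoking the Lyapunov identity in one direction, so more carefully: from $P\succ O$ satisfying the strict LMI one obtains $\widetilde{A}^{\top}P+P\widetilde{A}\prec O$, and the Lyapunov stability theorem then forces $\widetilde{A}=A+W^{L+2}C$ to be Hurwitz. To upgrade Hurwitz-ness of $A+W^{L+2}C$ to observability of $(C,A)$, I would argue by contradiction using the PBH test. If $(C,A)$ were unobservable, there would exist a right eigenvector $v$ of $A$ with eigenvalue $\lambda$ and $Cv=0$; then $(A+W^{L+2}C)v=\lambda v$ for \emph{every} $W^{L+2}$, so that unobservable eigenvalue is pinned and the stability margin of $\widetilde{A}$ cannot be made large enough to satisfy the scale-dependent bounds in $(\romannumeral1)$--$(\romannumeral2)$ for all admissible NN weights $W^{1},\dots,W^{L+1}$. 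Combining this rigidity with the arbitrary-pole-placement freedom that the conditions require gives the desired contradiction.

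The \textbf{main obstacle} I anticipate is precisely this necessity direction. A direct extraction of the Lyapunov block only delivers detectability of $(C,A)$, which is strictly weaker than observability. Bridging this gap requires exploiting the coupling between the spectrum of $\widetilde{A}$ and the NN-induced perturbation norms $\|M_{1}\|_{\infty},\|M_{2}\|_{\infty}$ inside conditions $(\romannumeral1)$--$(\romannumeral2)$; the cleanest route appears to be a PBH-based contradiction that uses the invariance of any unobservable eigenvalue of $A$ under the rank-constrained correction $W^{L+2}C$ to preclude the pole-placement freedom implicitly demanded by the hypothesis. The sufficiency direction is, by contrast, mostly bookkeeping, where the only subtle point is to promote the weak inequalities in $(\romannumeral1)$--$(\romannumeral2)$ to the strict inequalities demanded by $\prec O$, which can be done by an arbitrarily small perturbation of $Q$ (or of $\lambda_{\sigma}$).
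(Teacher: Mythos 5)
Your sufficiency argument coincides with the paper's: use observability to pole-assign $W^{L+2}$ so that $\widetilde{A}=A+W^{L+2}C$ is Hurwitz, take $P=\int_0^{\infty}e^{\widetilde{A}^{\top}t}Qe^{\widetilde{A}t}\mathrm{d}t$ as the solution of $\widetilde{A}^{\top}P+P\widetilde{A}=-Q$, expand the two terms of \eqref{eq: LMI for LTI without control} into the block matrix you call $\Phi$ (the paper's $-M_0$), note that $M_2$ has zero diagonal, and conclude by strict diagonal dominance of $-\Phi$. Your observation that conditions (\romannumeral1)--(\romannumeral2) only deliver \emph{non-strict} dominance, so that a small perturbation of $Q$ or $\lambda_{\sigma}$ is needed to get $\prec O$, is a real wrinkle that the paper passes over silently; flagging it is correct.

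The necessity direction is where the gap lies, and you have diagnosed it accurately but not closed it --- nor, in fact, does the paper. The paper's necessity proof is a single assertion: if $(C,A)$ is unobservable then $\widetilde{A}$ is not Hurwitz, hence $\widetilde{A}^{\top}\widetilde{P}+\widetilde{P}\widetilde{A}\prec O$ with $\widetilde{P}\succ O$ is impossible. As your PBH argument shows, an unobservable eigenvalue of $A$ is indeed pinned in $A+W^{L+2}C$ for every $W^{L+2}$, but this obstructs Hurwitzness only when that eigenvalue lies in the closed right half-plane; a \emph{stable} unobservable mode leaves $\widetilde{A}$ Hurwitz, so the argument establishes only detectability of $(C,A)$, exactly as you say. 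Your proposed bridge --- that the scale-dependent bounds in (\romannumeral1)--(\romannumeral2) implicitly demand arbitrary pole-placement freedom --- does not hold up: those conditions only require that \emph{some} choice of weights satisfies two norm inequalities, and for a Hurwitz $\widetilde{A}$ with a stable unobservable mode one can generally shrink $W^{L+2}$ and the inner weights (hence $N_{\pi w}$, $N_{\xi x}$, $N_{\xi w}$) until both inequalities hold, so no contradiction is produced. The ``only if'' half of the biconditional therefore remains unproven in your proposal; the honest conclusion is that the statement as written should be weakened to detectability (or supplemented by an argument neither you nor the paper supplies), and your anticipated ``main obstacle'' is not a technicality but the substantive defect of this direction.
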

    \begin{proof}
        {\color{black} The proof is provided in Appendix \uppercase\expandafter{\romannumeral1}.}
    \end{proof}
    
    \begin{remark}
        It should be noticed that $W^{L+2}$ in the shortcut connection of the NN plays an essential role in the construction of the above solution $P$ by pole assignments. Moreover, from (\romannumeral1) and (\romannumeral2), the solution $P$ also be utilized to guarantee the existence of a reliable NN mapping $\pi_{\theta}$. We note that $(C, A)$ is observable if $A$ is Hurwitz. Then, we can take $\widetilde{A}=A$ by setting $W^{L+2}=O$ from the NN mapping $\pi_{\theta}$, which means that the residual neural network defined in \eqref{eq: resnets} will degenerate to a fully-connected NN. Therefore, Corollary \ref{coro: existence of solution of LMI} shows that the LMI \eqref{eq: LMI for LTI without control} solution exists in this case.
        \label{rm: role explanation for shortcut}
    \end{remark}
    
    \begin{corollary}
        Let the assumptions (\romannumeral1) and (\romannumeral2) be still satisfied. We set $\alpha_{\sigma}=\bm{0}_{n_{\sigma}}$ but $\widetilde{A}=A$, and corresponding matrices $Q,\ P$ defined in Proposition \ref{prop: existence of solution of LMI}. Then $P$ is a solutions for LMI \eqref{eq: LMI for LTI without control} if and only if $A$ is Hurwitz.
    \label{coro: existence of solution of LMI}
    \end{corollary}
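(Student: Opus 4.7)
The plan is to derive this corollary as a direct specialization of Proposition \ref{prop: existence of solution of LMI} obtained by setting $W^{L+2}=O$. That choice forces $N_{\pi x}=W^{L+2}C=O$ and hence $\widetilde{A}=A+N_{\pi x}=A$, so the residual NN of \eqref{eq: resnets} collapses to a fully-connected feed-forward NN, as already noted in Remark \ref{rm: role explanation for shortcut}. Under this reduction, the Lyapunov integral featured in Proposition \ref{prop: existence of solution of LMI} becomes $P=\int_{0}^{\infty}e^{A^{\top}t}Qe^{At}\,\mathrm{d}t$, and the pole-assignment freedom that the proposition exploits through $W^{L+2}$ is absent, so the convergence requirement ``$\widetilde{A}$ Hurwitz'' reduces directly to ``$A$ Hurwitz.''

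For the ``if'' direction, suppose $A$ is Hurwitz. The integral defining $P$ then converges absolutely, $P\succ O$ because $Q$ is diagonal with strictly positive entries, and $P$ satisfies the Lyapunov equation $A^{\top}P+PA=-Q$. Substituting $\widetilde{A}=A$ and $\alpha_{\sigma}=\bm{0}_{n_{\sigma}}$ into the block-matrix computation carried out in the proof of Proposition \ref{prop: existence of solution of LMI}, the left-hand side of \eqref{eq: LMI for LTI without control} reduces to
\begin{equation*}
\begin{bmatrix} -Q & -M_{1} \\ -M_{1}^{\top} & M_{2}-2\operatorname{diag}(\lambda_{\sigma}) \end{bmatrix},
\end{equation*}
with $M_{1}$ and $M_{2}$ as defined in Proposition \ref{prop: existence of solution of LMI}. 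Hypotheses $(\mathrm{i})$ and $(\mathrm{ii})$ are precisely the block-wise strict diagonal-dominance thresholds needed to certify negative definiteness by Gershgorin, exactly as in the cited proposition, and \eqref{eq: LMI for LTI without control} follows.

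For the ``only if'' direction, observe first that $P$ is well-defined as an element of $\mathcal{L}(\mathbf{R}^{n_{s}})$ if and only if $A$ is Hurwitz, since the Lyapunov integral converges absolutely precisely in that regime. Consequently, $P$ can satisfy \eqref{eq: LMI for LTI without control} only when $A$ is Hurwitz. For a more constructive verification, assume $P$ is well-defined and solves the LMI; testing the quadratic form against the vector $[v^{\top},\bm{0}^{\top}]^{\top}$ annihilates the quadratic-constraint contribution, because $N_{\pi x}=O$ together with $\alpha_{\sigma}=\bm{0}_{n_{\sigma}}$ forces the $x$--$x$ block of $R_{\xi}^{\top}\Psi_{\sigma}^{\top}M_{\sigma}(\lambda_{\sigma})\Psi_{\sigma}R_{\xi}$ to vanish, leaving $v^{\top}(A^{\top}P+PA)v<0$ for every nonzero $v$. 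Combined with $P\succ O$, Lyapunov's stability theorem then forces $A$ to be Hurwitz.

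The main obstacle is purely algebraic bookkeeping: one must verify that setting $N_{\pi x}=O$ simultaneously kills the pole-assignment cross terms $PN_{\pi x}+N_{\pi x}^{\top}P$ that otherwise appear in the $(1,1)$ block of $R_{\pi}^{\top}(\cdot)R_{\pi}$, and that the $x$--$x$ block of $R_{\xi}^{\top}\Psi_{\sigma}^{\top}M_{\sigma}(\lambda_{\sigma})\Psi_{\sigma}R_{\xi}$ is indeed zero when $\alpha_{\sigma}=\bm{0}_{n_{\sigma}}$. Both checks follow directly from the structures in \eqref{eq: R_pi and R_xi} and \eqref{eq: Psi_sigma and M_sigma} and do not require any new estimate beyond those already used in the proof of Proposition \ref{prop: existence of solution of LMI}.
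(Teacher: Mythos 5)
Your proposal is correct and follows essentially the same route as the paper, which simply declares the corollary a direct specialization of Proposition \ref{prop: existence of solution of LMI} with $W^{L+2}=O$ (hence $N_{\pi x}=O$, $\widetilde{A}=A$), so that the block computation, the Lyapunov equation $A^{\top}P+PA=-Q$, and the diagonal-dominance certificate from hypotheses (\romannumeral1)--(\romannumeral2) carry over verbatim. Your ``only if'' step---testing the LMI against $[v^{\top},\bm{0}^{\top}]^{\top}$ to extract $v^{\top}(A^{\top}P+PA)v<0$ and invoking Lyapunov's theorem with $P\succ O$---is in fact a cleaner justification than the contradiction sketched in the proposition's necessity argument, but it is the same underlying idea rather than a different proof.
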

    \begin{proof}
    The proof is a direct extension of Proposition \ref{prop: existence of solution of LMI}.
    \end{proof}
    
    The next theorem gives the necessary conditions to achieve the control target in Problem \ref{prop: control problem for linear systems} by using the measurement $\widehat{x}(t)$. From \eqref{eq: linear systems}-\eqref{eq: nos for linear systems}, by utilizing the NN controller of the form $u(t)=\pi_{\theta_2}(\widehat{x})$, it is not difficult to obtain
        \begin{equation}
            \left\{
                \begin{aligned}
                \dot{x}&=A x+B \pi_{\theta_1}(\widehat{x}) \\
                \dot{\widehat{x}}&=A \widehat{x}+B \pi_{\theta_1}(\widehat{x}) + \pi_{\theta_2}\left(C(\widehat{x}-x)\right)
                \end{aligned}
            \right.
            \label{eq: system and nos for LTI}
        \end{equation}
        By denoting $x_1 \triangleq  x$, $x_2\triangleq \widehat{x}-x$, and $\bm{x}^{\top}=[x_1^{\top},x_2^{\top}]$, equation \eqref{eq: system and nos for LTI} turns into
        $$
        \frac{\mathrm{d}\bm{x}}{\mathrm{d}t}=\widehat{A}\bm{x}+v(\bm{x}),
        $$
        where $\widehat{A}=\operatorname{diag}\left(A,A\right)$ and $v(\bm{x})=\left[\begin{array}{c}
            B\pi_{\theta_1}([I,I]\bm{x})\\
            \pi_{\theta_2}([O,C]\bm{x})
        \end{array}\right].$ Correspondingly, as $K=2$, we also treat $\bm{x}(t)$ as an input variable of the NN mapping vector $\bm{\pi_{\theta}}=[\pi^{\top}_{\theta_1},\pi^{\top}_{\theta_2}]^{\top}$.
    \begin{theorem}
       We consider two NN mappings $\pi_{\theta_1}$ and $\pi_{\theta_2}$ with parameters $\theta_i=\left(L_i,n_{\sigma_i}, W_i^{1},\cdots,W_i^{L_i+2}\right),\ i=1,2$. Let parameter $K$ in Lemma \ref{lem: QC for K mappings} be equal to $2$, and $T_1^1$, $T_1^2$, $T_2^1$,  $T_2^2$ in \eqref{eq: R_pi and R_xi 2} are equal to $[I,I]$, $B$, $[O,C]$, $I$, respectively. We suppose that there exists a matrix $\widehat{P}\in \mathcal{L}(\mathbf{R}^{2n_s})$ and $\widehat{P}\succ O$ such that
        \begin{equation}
        \begin{aligned}
            \widehat{R_{\bm{\pi}}} ^{\top}
            \left[
                \begin{array}{cc}
                    \widehat{A}^{\top}\widehat{P}  + \widehat{P}\widehat{A} & \widehat{P}\\
                    \widehat{P}&O
                \end{array}
            \right]&\widehat{R_{\bm{\pi}}}\\
            +&\widehat{R_{\xi}}^{\top}\mathbf{\Psi}(2)^{\top} \mathbf{M}(2) \mathbf{\Psi}(2)\widehat{R_{\xi}} \prec O,
        \end{aligned}
        \label{eq: LMI for LTI with control}
        \end{equation}
        where $\widehat{R_{\bm{\pi}}}$, $\widehat{R_{\xi}}$, and $\bm{\Psi}(2)$, $\bm{M}(2)$ are defined in \eqref{eq: R_pi and R_xi 2}, and \eqref{eq: Psi_sigma and M_sigma 2} as $K=2$, respectively.
        Then, the LTI system \eqref{eq: linear systems} is {\color{black}neural exponentially observable} and globally exponentially stable.
        \label{theo: LMI for LTI with control}
    \end{theorem}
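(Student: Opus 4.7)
The plan is to mimic the Lyapunov analysis that presumably drives Theorem~\ref{theo: LMI for LTI without control}, but on the augmented state $\bm{x}=[x_1^{\top},x_2^{\top}]^{\top}$ with $x_1=x$ and $x_2=\widehat{x}-x$, so that both the controlled plant and the observer error are tracked simultaneously. I will take the quadratic Lyapunov candidate $V(\bm{x})=\bm{x}^{\top}\widehat{P}\bm{x}$ with $\widehat{P}\succ O$ given by the LMI \eqref{eq: LMI for LTI with control}. Along trajectories of the closed-loop system $\dot{\bm{x}}=\widehat{A}\bm{x}+v(\bm{x})$, the derivative is
\begin{equation*}
\dot{V}=\bm{x}^{\top}\bigl(\widehat{A}^{\top}\widehat{P}+\widehat{P}\widehat{A}\bigr)\bm{x}+2\bm{x}^{\top}\widehat{P}\,v(\bm{x}).
\end{equation*}

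The next step is to rewrite $\dot{V}$ as a quadratic form in $[\bm{x}^{\top},(w_{\sigma}^{K})^{\top}]^{\top}$ using the NN isolation machinery of Section~\ref{sec: NN Representation and NN mapping vector}. With the choices $T_1^1=[I,I]$, $T_1^2=B$, $T_2^1=[O,C]$, $T_2^2=I$, the vector $v(\bm{x})=[B\pi_{\theta_1}([I,I]\bm{x})^{\top},\pi_{\theta_2}([O,C]\bm{x})^{\top}]^{\top}$ is precisely the ``output'' part of the representation \eqref{eq: NN representation_2}, so that
\begin{equation*}
\begin{bmatrix}\bm{x}\\ v(\bm{x})\end{bmatrix}=\widehat{R_{\bm{\pi}}}\begin{bmatrix}\bm{x}\\ w_{\sigma}^{K}\end{bmatrix}.
\end{equation*}
Substituting this identity yields
\begin{equation*}
\dot{V}=\begin{bmatrix}\bm{x}\\ w_{\sigma}^{K}\end{bmatrix}^{\top}\widehat{R_{\bm{\pi}}}^{\top}\begin{bmatrix}\widehat{A}^{\top}\widehat{P}+\widehat{P}\widehat{A} & \widehat{P}\\ \widehat{P} & O\end{bmatrix}\widehat{R_{\bm{\pi}}}\begin{bmatrix}\bm{x}\\ w_{\sigma}^{K}\end{bmatrix}.
\end{equation*}

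Now I invoke Lemma~\ref{lem: QC for K mappings} with $K=2$: the quadratic form in $[(\xi_{\sigma}^{K})^{\top},(w_{\sigma}^{K})^{\top}]^{\top}$ under $\mathbf{\Psi}(2)^{\top}\mathbf{M}(2)\mathbf{\Psi}(2)$ is non-negative, and by the second transformation in \eqref{eq: R_pi and R_xi 2} it equals $[\bm{x}^{\top},(w_{\sigma}^{K})^{\top}]\widehat{R_{\xi}}^{\top}\mathbf{\Psi}(2)^{\top}\mathbf{M}(2)\mathbf{\Psi}(2)\widehat{R_{\xi}}[\bm{x}^{\top},(w_{\sigma}^{K})^{\top}]^{\top}$. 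Adding this non-negative quantity to $\dot{V}$ and using the LMI hypothesis \eqref{eq: LMI for LTI with control} gives
\begin{equation*}
\dot{V}\leq-\eta\,\bigl\|[\bm{x}^{\top},(w_{\sigma}^{K})^{\top}]^{\top}\bigr\|_{2}^{2}\leq -\eta\,\|\bm{x}\|_{2}^{2}
\end{equation*}
for some $\eta>0$ obtained from the spectral gap of the LMI. Standard comparison then gives $V(\bm{x}(t))\leq V(\bm{x}(0))\exp(-2\kappa t)$ with $\kappa=\eta/(2\lambda_{\max}(\widehat{P}))$, and consequently $\|\bm{x}(t)\|_{2}\leq M\exp(-\kappa t)\|\bm{x}(0)\|_{2}$ with $M=\sqrt{\lambda_{\max}(\widehat{P})/\lambda_{\min}(\widehat{P})}$. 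Since $\|\bm{x}(0)\|_{2}\leq \|x(0)\|_{2}+\|\widehat{x}(0)-x(0)\|_{2}\leq 2\|x(0)\|_{2}+\|\widehat{x}(0)\|_{2}$, the two projections $x(t)=[I,O]\bm{x}(t)$ and $\widehat{x}(t)-x(t)=[O,I]\bm{x}(t)$ each decay at rate $\kappa$, which is exactly global exponential stability and neural exponential observability in the sense of Definition~2 (with $T=I$).

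The main obstacle I anticipate is the bookkeeping step of verifying that the block structure of $\widehat{R_{\bm{\pi}}}$ and $\widehat{R_{\xi}}$ arising from the general Section~\ref{sec: NN Representation and NN mapping vector} construction genuinely matches the two closed-loop nonlinearities once the pre/post matrices $T_k^1$, $T_k^2$ are plugged in; a minor subtlety is that the ``input'' to the $K$-NN representation is $\bm{x}\in\mathbf{R}^{2n_{s}}$ rather than the native inputs of $\pi_{\theta_1}$, $\pi_{\theta_2}$, so one must check that the shortcut-connection blocks $T_k^{2}W_k^{L_k+2}T_k^{1}$ appear in the correct rows of $\widehat{N_{\bm{\pi}x}}$. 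Apart from this, the argument is a direct vector-valued extension of the scalar observer case treated in Theorem~\ref{theo: LMI for LTI without control}, and no new analytical tool beyond Lemma~\ref{lem: QC for K mappings} is required.
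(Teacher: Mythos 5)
Your proposal is correct and follows essentially the same route as the paper's own proof: the Lyapunov candidate $V(\bm{x})=\bm{x}^{\top}\widehat{P}\bm{x}$ on the augmented state $[x^{\top},(\widehat{x}-x)^{\top}]^{\top}$, the substitution via $\widehat{R_{\bm{\pi}}}$, the addition of the non-negative quadratic constraint from Lemma \ref{lem: QC for K mappings} with $K=2$, and the comparison argument yielding the exponential bound. The only cosmetic difference is your explicit bound $\|\bm{x}(0)\|_2\leq 2\|x(0)\|_2+\|\widehat{x}(0)\|_2$, which matches the paper's constant up to a factor.
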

    \begin{proof}
        {\color{black} The proof is provided in Appendix \uppercase\expandafter{\romannumeral3}.}
    \end{proof}
    
    Reasonably, the existence of solutions to LMI \eqref{eq: LMI for LTI with control} needs to be taken into account, and below, we propose one class of solutions satisfying LMI \eqref{eq: LMI for LTI with control}.
    \begin{proposition}
        By setting $\alpha_{\sigma_i}=\mathbf{0}_{n_{\sigma_i}}$, 
        \begin{itemize}
            \item $\widetilde{A}_1=A+BW_1^{L_1+2}$, $P_1=\int_0^{\infty}e^{\widetilde{A}_1^{\top}t}Q_1e^{\widetilde{A}_1t}\mathrm{d}t$ with a diagonal matrix $Q_1\succ O$,
            \item and $\widetilde{A}_2=A+W_2^{L_2+2}C$, $P_2=\int_0^{\infty}e^{\widetilde{A}_2^{\top}t}Q_2e^{\widetilde{A}_2t}\mathrm{d}t$ with a diagonal matrix $Q_2\succ O$,
        \end{itemize}
        we assume that there exists $\lambda_{\sigma}^2>\bm{0}_{n^2_{\sigma}}$ such that
        \begin{itemize}
            \item[(\romannumeral3)] $\|M_1\|_{\infty}+\|M_3\|_{\infty}\leq \min_i(\widehat{q}_i)$, where $M_1=-\widehat{P}\widehat{N_{\bm{\pi} w}}-\widehat{N_{\xi x}}^{\top}R_1$ with $R_1=\operatorname{diag}(\lambda_{\sigma}^2\circ \beta_{\sigma}^2)$, $\widehat{P}=\operatorname{diag}(P_1,P_2)$, $q_i$ is the $i_{\text{th}}$ diagonal entry in $\widehat{Q}=\operatorname{diag}(Q_1,Q_2)$, and $M_3=\left[\begin{array}{cc}
                O & P_1BW_1^{L_1+2}\\
                \star & O
            \end{array}\right]$,
            \item[(\romannumeral4)] $\|M_1^{\top}\|_{\infty} + \|M_2\|_{\infty}\leq 2\min_{i}(\lambda_{\sigma,i})$, where $\lambda_{\sigma,i}$ is the $i_{\text{th}}$ element of $\lambda_{\sigma}^2$, $M_2 = R_1\widehat{N_{\xi w}}+\widehat{N_{\xi w}}^{\top}R_1$.
        \end{itemize}  
        Then LMI \eqref{eq: LMI for LTI with control} has a solution $\widehat{P}$ if and only if $(C, A)$ is observable and $(A,B)$ is controllable.
        \label{prop: existence of solution of LMI 2}
    \end{proposition}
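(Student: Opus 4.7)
The plan is to mirror the proof of Proposition \ref{prop: existence of solution of LMI} in the block-diagonal closed-loop setting, where the controller NN $\pi_{\theta_1}$ and the observer NN $\pi_{\theta_2}$ appear simultaneously. The key structural observation is that in the coordinates $\bm{x}=[x_1^\top,x_2^\top]^\top=[x^\top,(\widehat{x}-x)^\top]^\top$, the combined linear part $\widehat{A}+\widehat{N_{\bm{\pi} x}}$ has block upper-triangular form with diagonal blocks $\widetilde{A}_1=A+BW_1^{L_1+2}$ and $\widetilde{A}_2=A+W_2^{L_2+2}C$ and off-diagonal block $BW_1^{L_1+2}$. This decouples the stability requirements into a controller side and an observer side, coupled only through the residual term $M_3$ recorded in hypothesis (\romannumeral3).

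For the sufficient direction, assuming $(A,B)$ controllable and $(C,A)$ observable, I would first apply pole placement to select $W_1^{L_1+2}$ so that $\widetilde{A}_1$ is Hurwitz, and dually $W_2^{L_2+2}$ so that $\widetilde{A}_2$ is Hurwitz. The integrals $P_1,P_2$ are then well defined and positive definite, hence $\widehat{P}=\operatorname{diag}(P_1,P_2)\succ O$. Expanding $\widehat{R_{\bm{\pi}}}^\top(\cdot)\widehat{R_{\bm{\pi}}}$ via \eqref{eq: R_pi and R_xi 2}, the $(1,1)$-block becomes $(\widehat{A}+\widehat{N_{\bm{\pi} x}})^\top\widehat{P}+\widehat{P}(\widehat{A}+\widehat{N_{\bm{\pi} x}})$, and a direct block-wise multiplication verifies this equals $-\widehat{Q}+M_3$ with the precise $M_3$ stated in (\romannumeral3). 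Adding the sector contribution $\widehat{R_{\xi}}^\top\mathbf{\Psi}(2)^\top\mathbf{M}(2)\mathbf{\Psi}(2)\widehat{R_{\xi}}$, with $\alpha_{\sigma_i}=\bm{0}$, produces off-diagonal blocks $M_1,M_1^\top$ and a $(2,2)$-entry involving $M_2-2\operatorname{diag}(\lambda_\sigma^2)$. I would then close the argument with a Gershgorin-style row-sum estimate: condition (\romannumeral3) supplies diagonal dominance of the $(1,1)$-block against both $M_1$ and $M_3$, while (\romannumeral4) does the same for the $(2,2)$-block, yielding strict negative definiteness of \eqref{eq: LMI for LTI with control}.

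For the necessary direction, because the LMI must hold for every admissible $[x^\top,(w_\sigma^K)^\top]^\top$, restricting to $w_\sigma^K=\bm{0}$ kills the sector contribution (using $\alpha_{\sigma_i}=\bm{0}$), and the inequality collapses to $(\widehat{A}+\widehat{N_{\bm{\pi} x}})^\top\widehat{P}+\widehat{P}(\widehat{A}+\widehat{N_{\bm{\pi} x}})\prec O$. With $\widehat{P}\succ O$ this forces $\widehat{A}+\widehat{N_{\bm{\pi} x}}$ to be Hurwitz; by the block-triangular structure its spectrum equals $\sigma(\widetilde{A}_1)\cup\sigma(\widetilde{A}_2)$, so both $\widetilde{A}_1,\widetilde{A}_2$ must be Hurwitz. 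Since the construction of $P_1,P_2$ uses the design matrices $W_i^{L_i+2}$ as the only free parameters, the required placement is possible if and only if $(A,B)$ is controllable and $(C,A)$ is observable, matching the analogous step in Proposition \ref{prop: existence of solution of LMI}.

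I expect the main obstacle to be controlling the cross-coupling $M_3$, whose nonzero off-diagonal block $P_1BW_1^{L_1+2}$ is a feature specific to the joint controller--observer setting and absent from Proposition \ref{prop: existence of solution of LMI}. The pole-placement gain $W_1^{L_1+2}$ generally scales with the desired decay rate of $\widetilde{A}_1$, so $\|M_3\|_\infty$ cannot be assumed small a priori; hypothesis (\romannumeral3) has to be used to show that the slack $\min_i(\widehat{q}_i)$ in the Lyapunov decay simultaneously absorbs both $M_1$ and $M_3$. Verifying that the joint inequality system (\romannumeral3)--(\romannumeral4) is consistent with a concrete admissible choice of $\lambda_\sigma^2$, $Q_1,Q_2$, and NN shortcut weights, rather than merely being hypothesised, is the delicate step and likely requires a careful scaling argument linking the pole-placement locations to the sector bounds $\beta_{\sigma}^2$.
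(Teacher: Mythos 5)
Your proposal follows essentially the same route as the paper's proof: pole placement under controllability/observability to make $\widetilde{A}_1$ and $\widetilde{A}_2$ Hurwitz, the Lyapunov-integral construction of $P_1,P_2$, block-wise expansion of the LMI into $-\widehat{Q}+M_3$ plus the sector terms, and closure by strict diagonal dominance using (\romannumeral3)--(\romannumeral4), with necessity obtained by contradiction from the non-Hurwitz linear part. Your added remarks on the block-triangular spectrum and on the consistency of the hypotheses with the size of $M_3$ are sensible elaborations but do not change the argument.
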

    \begin{proof}
        {\color{black} The proof is provided in Appendix \uppercase\expandafter{\romannumeral1}.}
    \end{proof}
    
    In the following sections, to make the content as concise as possible, we would not prove the existence of LMI (\eqref{eq: LMI for integral-chain systems}, \eqref{eq: LMI for integral-chain systems 2} and \eqref{eq: LMI for non-integral chain systems}) solutions in detail once the conditions of observability or stabilization are satisfied since the proofs are similar to Proposition \ref{prop: existence of solution of LMI}-\ref{prop: existence of solution of LMI 2}.
\subsection{Uncertainty is Effectively Dealt by Neural Observers}
    We construct an extended state, $$x_{n+1}(t)=\mathcal{F}(t,\bm{x},w),$$ for \eqref{eq: integral-chain nonlinear systems_2} and then redefine system \eqref{eq: integral-chain nonlinear systems_2} as follows
    \begin{equation}
        \left\{
        \begin{aligned}
            \dot{\bm{\widetilde{x}}}&=\widetilde{\mathcal{A}}\bm{\widetilde{x}}+\widetilde{L}(t,\bm{x},w,u),\\
            y&=\widetilde{c}\bm{\widetilde{x}},
        \end{aligned}
        \right.
        \label{eq: integral-chain nonlinear systems_3}
    \end{equation}
    where $\bm{\widetilde{x}} = [\bm{x}^{\top},x_{n+1}]^{\top}$, $\widetilde{\mathcal{A}} = (a_{ij})_{(n+1)\times (n+1)}$ is defined by 
    $$
        a_{ij}=\left\{\begin{array}{l}
        1,\ i+1=j  \\
        0,\ \text{else}
        \end{array}\right.,\ \widetilde{c}=[1,0, \cdots, 0]\in \mathcal{L}(\mathbf{R}^{n+1},\mathbf{R}), 
    $$
    
    $$
        \widetilde{L}(t,\bm{x},w,u)=[0, \cdots, bu(t),\frac{\mathrm{d}}{\mathrm{d}t}\mathcal{F}(t,\bm{x},w)]^{\top}. 
    $$
    Correspondingly, the output of neural observer \eqref{eq: nos for integral-chain nonlinear systems} is redefined as $\widehat{y}=\widetilde{c}[\widehat{\bm{x}}^{\top},\widehat{x}_{n+1}]^{\top}$. We suppose that $\mathcal{F}$ satisfies Assumption \ref{ap: the uncertainty for integral-chain} and denote $h(t)\triangleq\frac{\mathrm{d}}{\mathrm{d}t}\mathcal{F}(t,\bm{x},w)$, $\eta_i\triangleq\epsilon^{-n-1+i}(x_i-\widehat{x}_i),i=1,\cdots,n+1$. Then, from \eqref{eq: integral-chain nonlinear systems_3} and \eqref{eq: nos for integral-chain nonlinear systems} we derive an error system for \eqref{eq: integral-chain nonlinear systems} and \eqref{eq: nos for integral-chain nonlinear systems}:
    \begin{equation}
     \left\{
        \begin{aligned}
            \epsilon\dot{\eta}_i&=\eta_{i+1}-\pi_{\theta_i}(\widetilde{c}\bm{\eta}),\quad i=1,2,\cdots,n, \\
            \epsilon\dot{\eta}_{n+1}&=-\pi_{\theta_{n+1}}(\widetilde{c}\bm{\eta})+\epsilon h(t),
        \end{aligned}  
    \right.   
    \label{eq: error system for integral-chain systems}
    \end{equation}
    where $\bm{\eta}=[\eta_1,\cdots,\eta_{n+1}]^{\top}$. From the above error system \eqref{eq: error system for integral-chain systems}, as $K=n+1$ in Lemma \ref{lem: QC for K mappings}, we treat $\bm{\eta}$ as the input of NN mapping vector $\bm{\pi_{\theta}}=[\pi_{\theta_1},\cdots,\pi_{\theta_{n+1}}]^{\top}$ by setting $T_1^i=\widetilde{c},T_2^i=1,\ i=1,\cdots,n+1$. Now, we formally propose the result for Problem \ref{pro: dc for integral-chain systems}.
    \begin{theorem}
     We consider $n+1$ NN mappings $\pi_{\theta_i}$ with parameters $\theta_i=\left(L_i,n_{\sigma_i}, W_i^{1},\cdots,W_i^{L_i+2}\right),\ i=1,\cdots,n+1$. We assume that
    \begin{itemize}
        \item[(1)] $K=n+1$ in Lemma \ref{lem: QC for K mappings}, and $T_i^1=\widetilde{c},T_i^2=1,\ i=1,\cdots,n+1$ in \eqref{eq: R_pi and R_xi 2},
        \item[(2)] there exists a matrix $P\in \mathcal{L}(\mathbf{R}^{n+1})$ and $P\succ O$ such that
        \begin{equation}
            \begin{aligned}
                \widehat{R_{\bm{\pi}}}^{\top}&
                \left[
                    \begin{array}{cc}
                        \widetilde{\mathcal{A}}^{\top}P + P\widetilde{\mathcal{A}} & -P\\
                        -P&O
                    \end{array}
                \right]\widehat{R_{\bm{\pi}}} \\
                +&\widehat{R_{\xi}}^{\top}\mathbf{\Psi}(n+1)^{\top} \mathbf{M}(n+1) \mathbf{\Psi}(n+1)\widehat{R_{\xi}} \prec O.
            \end{aligned}
        \label{eq: LMI for integral-chain systems}
        \end{equation}
    \end{itemize}
    Then we have the following results:
    \begin{itemize}
            \item[(\textbf{\uppercase\expandafter{\romannumeral1}})] Neural observability: for all $x(0)\in \mathbb{R}^n$, $\|\bm{x}(t)-\widehat{\bm{x}}(t)\|_2\rightarrow 0$ as $\epsilon\rightarrow 0^+$ for $t \in \mathcal{T}_{\geq T}$ with $T>0$.
            \item[(\textbf{\uppercase\expandafter{\romannumeral2}})] Total uncertainty $x_{n+1}=F(t,\bm{x},w)$ can be measured by $\widehat{x}_{n+1}$ as $\epsilon\rightarrow 0^+$, i.e., $\lim_{\epsilon\rightarrow 0^+}|x_{n+1}-\widehat{x}_{n+1}|=0$.
        \end{itemize}
        \label{theo: LMI for integral-chain systems}
    \end{theorem}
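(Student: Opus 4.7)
The plan is to perform a Lyapunov analysis on the rescaled error dynamics \eqref{eq: error system for integral-chain systems}, which in compact form reads
\begin{equation*}
\epsilon\dot{\bm{\eta}} \;=\; \widetilde{\mathcal{A}}\bm{\eta} \;-\; \bm{\pi_{\theta}}(\widetilde{c}\bm{\eta}) \;+\; \epsilon\, h(t)\, e_{n+1},
\end{equation*}
with $e_{n+1}=[0,\dots,0,1]^{\top}$. First I would verify, via Assumption \ref{ap: the uncertainty for integral-chain} together with the ODE \eqref{eq: integral-chain nonlinear systems_2} and the bound on $(w,\dot{w})$, that $h(t)=\tfrac{d}{dt}\mathcal{F}(t,\bm{x},w)$ is uniformly bounded by some constant $C_h$: the gradient bound on $\mathcal{F}$ combined with boundedness of $\bm{x}$, $u$, $\dot{w}$ makes $\dot{\bm{x}}$ bounded and hence $h$ bounded. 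The Lyapunov candidate is $V(\bm{\eta})=\bm{\eta}^{\top}P\bm{\eta}$, where $P\succ O$ is the solution of the LMI \eqref{eq: LMI for integral-chain systems}.

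The key step is to turn the LMI into a pointwise decay of $V$ along trajectories. With the NN isolation of Section \ref{sec: NN Representation and NN mapping vector} applied at $K=n+1$ with $T_i^{1}=\widetilde{c}$, $T_i^{2}=1$, introduce the internal signals $w_{\sigma}^{K}$ and $\xi_{\sigma}^{K}$ so that $[\bm{\eta}^{\top},\bm{\pi_{\theta}}(\widetilde{c}\bm{\eta})^{\top}]^{\top}=\widehat{R_{\bm{\pi}}}\,[\bm{\eta}^{\top},(w_{\sigma}^{K})^{\top}]^{\top}$ and $[(\xi_{\sigma}^{K})^{\top},(w_{\sigma}^{K})^{\top}]^{\top}=\widehat{R_{\xi}}\,[\bm{\eta}^{\top},(w_{\sigma}^{K})^{\top}]^{\top}$. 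Sandwiching the LMI with $[\bm{\eta}^{\top},(w_{\sigma}^{K})^{\top}]$ and discarding the nonnegative QC contribution supplied by Lemma \ref{lem: QC for K mappings} yields
\begin{equation*}
2\,\bm{\eta}^{\top}P\bigl(\widetilde{\mathcal{A}}\bm{\eta} - \bm{\pi_{\theta}}(\widetilde{c}\bm{\eta})\bigr) \;\le\; -\mu\,\|\bm{\eta}\|_{2}^{2}
\end{equation*}
for some $\mu>0$ determined by the LMI's negative-definiteness slack. Substituting the error dynamics gives $\epsilon\dot V \le -\mu\|\bm{\eta}\|_{2}^{2} + 2\epsilon\,\bm{\eta}^{\top}Pe_{n+1}h(t)$, and Young's inequality on the cross term together with $V\le\lambda_{\max}(P)\|\bm{\eta}\|_{2}^{2}$ produces the linear differential inequality
\begin{equation*}
\dot V \;\le\; -\frac{\mu}{2\epsilon\,\lambda_{\max}(P)}\, V \;+\; \frac{2\epsilon}{\mu}\,\|Pe_{n+1}\|_{2}^{2}\, C_{h}^{2}.
\end{equation*}

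Integration gives $V(t)\le V(0)\,e^{-\gamma t/\epsilon}+O(\epsilon^{2})$ with $\gamma=\mu/(2\lambda_{\max}(P))$, and hence $\|\bm{\eta}(t)\|_{2}^{2}\le \tfrac{\lambda_{\max}(P)}{\lambda_{\min}(P)}\|\bm{\eta}(0)\|_{2}^{2}\,e^{-\gamma t/\epsilon}+O(\epsilon^{2})$. Undoing the scaling $\eta_{i}=\epsilon^{-n-1+i}(x_{i}-\widehat{x}_{i})$ yields $|x_{i}-\widehat{x}_{i}|=\epsilon^{n+1-i}|\eta_{i}|=O(\epsilon^{n+2-i})$ for $i\le n$, which establishes \textbf{(\uppercase\expandafter{\romannumeral1})}, and $|x_{n+1}-\widehat{x}_{n+1}|=|\eta_{n+1}|=O(\epsilon)$, which establishes \textbf{(\uppercase\expandafter{\romannumeral2})}, provided $t$ is beyond the singular-perturbation boundary layer. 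The main obstacle I expect is exactly this boundary-layer bookkeeping: since $\eta_{i}(0)=\epsilon^{-n-1+i}(x_{i}(0)-\widehat{x}_{i}(0))$, the initial value $V(0)$ scales like $\epsilon^{-2n}$, so the transient $V(0)e^{-\gamma t/\epsilon}$ vanishes as $\epsilon\to 0^{+}$ only for $t$ exceeding a layer of thickness $O(\epsilon\ln\epsilon^{-1})$. This is precisely why the statement restricts $t$ to $t\ge T$ for any fixed $T>0$, and verifying that $\gamma t/\epsilon$ dominates $\ln V(0)$ uniformly in $\epsilon$ is the most delicate accounting in the proof.
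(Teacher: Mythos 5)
Your proposal is correct and follows essentially the same route as the paper's proof: the same rescaled error system, the same Lyapunov function $V(\bm{\eta})=\bm{\eta}^{\top}P\bm{\eta}$, the same use of the LMI together with Lemma \ref{lem: QC for K mappings} to obtain the $-\mu\|\bm{\eta}\|_2^2$ decay, the same boundedness argument for $h(t)$, and the same observation that the exponential $e^{-\gamma t/\epsilon}$ kills the $\epsilon^{-2n}$ blow-up of the initial condition for any fixed $t\ge T>0$. The only (cosmetic) difference is that you handle the disturbance term by Young's inequality on $V$ itself, whereas the paper passes to a differential inequality for $\sqrt{V}$ before applying Gronwall--Bellman; both give the same $O(\epsilon^{n+2-i})$ and $O(\epsilon)$ asymptotic rates.
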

    \begin{proof}
        {\color{black} The proof is provided in Appendix \uppercase\expandafter{\romannumeral3}.}
    \end{proof}
    \begin{remark}
        For \eqref{eq: LMI for integral-chain systems}, due to the observability of $(\widetilde{c},\widetilde{\mathcal{A}})$, we can select $W^{L+2}=[W_1^{L_{1}+2},\cdots,W_{n+1}^{L_{n+1}+2}]^{\top}$ as a pole assignment matrix such that $\mathcal{A}+\widehat{R_{\bm{\pi} x}}=\mathcal{A}+W^{L+2}\widetilde{c}$ is Hurwitz, where $\widehat{R_{\bm{\pi} x}}$ is a block matrix of $\widehat{R_{\bm{\pi}}}$. Then, by a similar analysis with Proposition \ref{prop: existence of solution of LMI}-\ref{prop: existence of solution of LMI 2}, it is not difficult to check that LMI \eqref{eq: LMI for integral-chain systems} has solutions under some given conditions.
        \label{rem: existence of solution of LMI 3}
    \end{remark}
    
        To decrease the computational complexity for LMI \eqref{eq: LMI for integral-chain systems} and avoid the consequences of sparsity \cite{zhang2018efficient,madani2017finding}, we can take that the NN mappings in \eqref{eq: nos for integral-chain nonlinear systems} are identical, i.e., $\pi_{\theta_i}(\cdot)=\pi_{\theta}(\cdot),i=1,\cdots,n+1$. Moreover, the gains of the NN mapping are equal to $\epsilon^{n+1-i}b_i$, i.e.,
        \begin{equation*}
        \left\{
        \begin{aligned}
            \dot{\widehat{x}}_i&=\widehat{x} _{i+1}+\epsilon^{n-i}\pi_{\theta}(\epsilon^{-n}(y-\widehat{y})),\widehat{x}_i(0)=\widehat{x}_{i,0},\\ \qquad &\qquad\qquad\qquad\qquad\qquad\qquad\qquad i=1,\cdots,n-1, \\
            \dot{\widehat{x}}_n&=\widehat{x} _{n+1}+\pi_{\theta}(\epsilon^{-n}(y-\widehat{y}))+bu,\widehat{x}_n(0)=\widehat{x}_{n,0},\\
            \dot{\widehat{x}}_{n+1}&=\epsilon^{-1}\pi_{\theta}(\epsilon^{-n}(y-\widehat{y})),\widehat{x}_{n+1}(0)=\widehat{x}_{n+1,0},\\
            \widehat{y}&=c\widehat{\bm{x}},
        \end{aligned}  
        \right.
        \end{equation*}
    Then we present the following corollary to solve the sparsity of LMI \eqref{eq: LMI for integral-chain systems}.
    \begin{corollary}
        We consider this NN map $\pi_{\theta}$ with one parameter $\theta=\left(L,n_{\sigma},W^{1},\cdots,W^{L+2}\right)$ and re-assume
        \begin{itemize}
            \item[(1)] $K=1$ in Lemma \ref{lem: QC for K mappings}, and $T_1^1=\widetilde{c},T_1^2=1$ in \eqref{eq: R_pi and R_xi 2},
            \item[(2)] Let $b$ be the vector $[b_1,\cdots,b_{n+1}]^{\top}$. We suppose that there exists a matrix $P\in \mathcal{L}(\mathbf{R}^{n+1})$ and $P\succ O$ such that
            \begin{equation}
                \begin{aligned}
                    \widehat{R_{\bm{\pi}}}^{\top}
                    \left[
                        \begin{array}{cc}
                            \widetilde{\mathcal{A}^{\top}}P + P\widetilde{\mathcal{A}} & -Pb\\
                            -Pb&O
                        \end{array}
                    \right]&\widehat{R_{\bm{\pi}}} \\
                    +\widehat{R_{\xi}}^{\top}\mathbf{\Psi}(1)^{\top} \mathbf{M}(1)& \mathbf{\Psi}(1)\widehat{R_{\xi}} \prec O,
                \end{aligned}
            \label{eq: LMI for integral-chain systems 2}
            \end{equation}
            where $\widehat{R_{\bm{\pi}}}$, $\widehat{R_{\xi}}$, $\mathbf{\Psi}(1)$ and $\mathbf{M}(1)$ are defined in \eqref{eq: R_pi and R_xi 2} and \eqref{eq: Psi_sigma and M_sigma 2}.
        \end{itemize}
        Then we can still obtain three results in Theorem \ref{theo: LMI for integral-chain systems}, including neural observability, and the measurement of the total uncertainty $\mathcal{F}(t,\bm{x},w)$.
        \label{coro: non-sparsity solution for LMI}
    \end{corollary}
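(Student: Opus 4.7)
The plan is to directly mimic the proof of Theorem \ref{theo: LMI for integral-chain systems}, with the only change being the collapse of the NN mapping vector $\bm{\pi_{\theta}}$ into a single scalar mapping $\pi_{\theta}$ whose output is broadcast through the fixed vector $b=[b_1,\cdots,b_{n+1}]^\top$. First I would re-derive the error dynamics. Substituting the identical mapping $\pi_{\theta}$ with gains $\epsilon^{n+1-i}b_i$ into the observer \eqref{eq: nos for integral-chain nonlinear systems} and repeating the change of variables $\eta_i=\epsilon^{-n-1+i}(x_i-\widehat{x}_i)$ that produced \eqref{eq: error system for integral-chain systems}, one obtains
\begin{equation*}
\epsilon\,\dot{\bm{\eta}} \;=\; \widetilde{\mathcal{A}}\bm{\eta} \;-\; b\,\pi_{\theta}(\widetilde{c}\bm{\eta}) \;+\; \epsilon\, e_{n+1}\, h(t),
\end{equation*}
which has exactly the same structure as the error system used in Theorem \ref{theo: LMI for integral-chain systems} except that the vector-valued nonlinearity $\bm{\pi_{\theta}}(\widetilde{c}\bm{\eta})\in\mathbf{R}^{n+1}$ has been replaced by the rank-one expression $b\,\pi_{\theta}(\widetilde{c}\bm{\eta})$ with the scalar $\pi_{\theta}(\widetilde{c}\bm{\eta})$.

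Next I would apply the NN isolation machinery from Section \ref{sec: NN Representation and NN mapping vector} with $K=1$ and $T_1^1=\widetilde{c}$, $T_1^2=1$, so that Lemma \ref{lem: QC for K mappings} reduces to the single-mapping QC of Lemma \ref{lem: QC for one mapping} and $\widehat{R_{\bm{\pi}}}$, $\widehat{R_{\xi}}$ specialize accordingly. Taking the standard Lyapunov candidate $V(\bm{\eta})=\bm{\eta}^\top P\bm{\eta}$ and differentiating along trajectories yields
\begin{equation*}
\epsilon\,\dot{V} \;=\; \begin{bmatrix}\bm{\eta}\\ \pi_{\theta}(\widetilde{c}\bm{\eta})\end{bmatrix}^\top \!\!\begin{bmatrix} \widetilde{\mathcal{A}}^\top P+P\widetilde{\mathcal{A}} & -Pb \\ -b^\top P & O \end{bmatrix}\!\! \begin{bmatrix}\bm{\eta}\\ \pi_{\theta}(\widetilde{c}\bm{\eta})\end{bmatrix} \;+\; 2\epsilon\,\bm{\eta}^\top P e_{n+1}h(t).
\end{equation*}
This is precisely the quadratic form whose middle matrix appears in \eqref{eq: LMI for integral-chain systems 2}; after inserting the identity $[\bm{\eta}^\top,\pi_\theta^\top]^\top=\widehat{R_{\bm{\pi}}}[\bm{\eta}^\top,w_\sigma^\top]^\top$ and adding the nonnegative QC slack $[\xi_\sigma^\top,w_\sigma^\top]\mathbf{\Psi}(1)^\top\mathbf{M}(1)\mathbf{\Psi}(1)[\xi_\sigma^\top,w_\sigma^\top]^\top$ (which equals a quadratic form in $[\bm{\eta}^\top,w_\sigma^\top]^\top$ via $\widehat{R_\xi}$), the feasibility of LMI \eqref{eq: LMI for integral-chain systems 2} gives a strict bound $\epsilon\,\dot{V}\le -c_0\|\bm{\eta}\|_2^2+2\epsilon\,\bm{\eta}^\top P e_{n+1}h(t)$ for some $c_0>0$.

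From this inequality the rest of the argument copies Theorem \ref{theo: LMI for integral-chain systems} verbatim. Using the boundedness of $h(t)$ implied by Assumption \ref{ap: the uncertainty for integral-chain} together with Young's inequality, I would obtain $\|\bm{\eta}(t)\|_2 = \mathcal{O}(\epsilon)$ after a transient time $T=T(\epsilon)$, and then undo the scaling by $\epsilon^{n+1-i}$ to conclude that $|x_i-\widehat{x}_i|\to 0$ as $\epsilon\to 0^+$ for $i=1,\dots,n$, yielding claim (\textbf{\uppercase\expandafter{\romannumeral1}}), and that $|x_{n+1}-\widehat{x}_{n+1}|=|\eta_{n+1}|\to 0$, yielding claim (\textbf{\uppercase\expandafter{\romannumeral2}}). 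The main obstacle is the bookkeeping in the second step: one has to check that replacing the full block $-P\in\mathcal{L}(\mathbf{R}^{n+1})$ in \eqref{eq: LMI for integral-chain systems} by the thin block $-Pb\in\mathcal{L}(\mathbf{R},\mathbf{R}^{n+1})$ in \eqref{eq: LMI for integral-chain systems 2} exactly matches the cross term $-2\bm{\eta}^\top Pb\,\pi_\theta$ that arises from the rank-one nonlinearity, and that the QC, which now certifies a single scalar activation vector $w_\sigma$ rather than $n+1$ independent ones, is still strong enough to close the Lyapunov inequality; once this matching is verified, the rest of the dissipation, transient-bounding, and unscaling steps transfer from Theorem \ref{theo: LMI for integral-chain systems} without essential modification.
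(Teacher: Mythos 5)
Your proposal is correct and follows exactly the route the paper intends: the paper's own proof of this corollary is the one-line remark that it is obtained ``by directly extending the proof of Theorem \ref{theo: LMI for integral-chain systems},'' and your write-up simply fleshes out that extension (error dynamics with the rank-one nonlinearity $b\,\pi_{\theta}(\widetilde{c}\bm{\eta})$, the $K=1$ quadratic constraint, the same Lyapunov/Gronwall dissipation argument, and the unscaling by $\epsilon^{n+1-i}$). You even correct a small inconsistency in the stated LMI by writing the lower-left block as $-b^{\top}P$ rather than $-Pb$, which is the dimensionally consistent symmetric counterpart.
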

    \begin{proof}
            By directly extending the proof of Theorem \ref{theo: LMI for integral-chain systems}, the proof of Corollary \ref{coro: non-sparsity solution for LMI} can be obtained trivially.
    \end{proof}
    
    \subsection{General Uncertainty in Linear Dynamics Can be Dealt by Neural Observers}
    Before showing Theorem \ref{theo: LMI for nonintegral-chain systems} for Problem \ref{pro: d for nonintegral-chain systems}, we introduce a necessary lemma. Furthermore, finally, we present the last Theorem for systems \eqref{eq: nonintegral-chain systems}.
    \begin{lemma}
    $(\mathbf{C},\mathbf{A}_{\epsilon})$ is observable if $A$, $C$, $B_w$ satisfy the \textit{extending observable condition} defined in Assumption \ref{ap: the uncertainty for nonintegral-chain systems},
    where $\mathbf{C}$ is defined in Assumption \ref{ap: the uncertainty for nonintegral-chain systems} (1), and $\mathbf{A}_{\epsilon}=\left[\begin{array}{cc}
        \epsilon A & B_w \\
        O & O
    \end{array}\right]$ with $\epsilon>0$.
    \label{lem: Positive definite under perturbation}
    \end{lemma}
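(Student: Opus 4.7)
The plan is to reduce observability of $(\mathbf{C},\mathbf{A}_{\epsilon})$ to observability of $(\mathbf{C},\mathbf{A})$---which is precisely the extending observable hypothesis---via two elementary invariances of the observability matrix: similarity preserves observability, and scaling the dynamics matrix by a nonzero scalar preserves observability. The whole argument is a short block-matrix computation and I do not expect any real obstacle; the only subtlety is guessing the correct similarity transform, which is essentially forced on us.

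First I would record the scaling invariance. For any $\alpha\neq 0$ one has
$$
\mathcal{O}(\mathbf{C},\alpha\mathbf{A})=\operatorname{diag}(I_{n_o},\alpha I_{n_o},\ldots,\alpha^{n-1}I_{n_o})\,\mathcal{O}(\mathbf{C},\mathbf{A}),
$$
where $n=n_s+n_q$, since the $k_{\text{th}}$ block row of $\mathcal{O}(\mathbf{C},\alpha\mathbf{A})$ is $\alpha^{k}\mathbf{C}\mathbf{A}^{k}$. The prefactor is invertible, so $(\mathbf{C},\mathbf{A})$ and $(\mathbf{C},\alpha\mathbf{A})$ have observability matrices of the same rank. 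Applying this with $\alpha=\epsilon>0$ turns the hypothesis into observability of $(\mathbf{C},\epsilon\mathbf{A})$, where $\epsilon\mathbf{A}=\left[\begin{smallmatrix}\epsilon A & \epsilon B_w\\ O & O\end{smallmatrix}\right]$.

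Next I would exhibit an explicit similarity that rescales only the off-diagonal block $\epsilon B_w$ back to $B_w$, thereby producing $\mathbf{A}_{\epsilon}$, while leaving $\mathbf{C}=[C,O]$ literally unchanged. Define $T=\operatorname{diag}(I_{n_s},\epsilon^{-1}I_{n_q})$, which is invertible because $\epsilon>0$. A direct block calculation will give
$$
T^{-1}(\epsilon\mathbf{A})T=\begin{bmatrix}\epsilon A & B_w\\ O & O\end{bmatrix}=\mathbf{A}_{\epsilon},\qquad \mathbf{C}T=[C,O]=\mathbf{C},
$$
where the second identity holds because the right block of $\mathbf{C}$ vanishes, making the rescaling of the second block of coordinates invisible to $\mathbf{C}$. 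Since similarity by an invertible $T$ preserves observability, observability of $(\mathbf{C},\epsilon\mathbf{A})$ transfers to $(\mathbf{C},\mathbf{A}_{\epsilon})$.

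Chaining the two steps yields $(\mathbf{C},\mathbf{A})$ observable $\Longrightarrow$ $(\mathbf{C},\mathbf{A}_{\epsilon})$ observable, which is exactly the claim. The only mildly nontrivial moment is the choice of $T$: it must satisfy $\mathbf{C}T=\mathbf{C}$ literally (not merely up to a nonzero scalar, since we want the output matrix to remain the same $\mathbf{C}$ that appears in the lemma) while also sending $\epsilon B_w\mapsto B_w$ and leaving $\epsilon A$ unchanged. These constraints force $T=\operatorname{diag}(I_{n_s},\epsilon^{-1}I_{n_q})$, after which the verification is routine.
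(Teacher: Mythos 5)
Your proof is correct and follows essentially the same route as the paper: the paper's own proof is a one-line assertion that the observability matrices of $(\mathbf{C},\mathbf{A}_{\epsilon})$ and $(\mathbf{C},\mathbf{A})$ have equal rank, and your scaling-plus-similarity factorization (equivalently, $\mathbf{C}\mathbf{A}_{\epsilon}^{k}=\epsilon^{k}\,\mathbf{C}\mathbf{A}^{k}T$ with $T=\operatorname{diag}(I_{n_s},\epsilon^{-1}I_{n_q})$) is exactly the justification that rank identity needs. In effect you have supplied the detail the paper leaves implicit; no gap.
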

    \begin{proof}
        The proof is given in Appendix \uppercase\expandafter{\romannumeral2}.
    \end{proof}
    \begin{theorem}
    We consider two NN mappings $\pi_{\theta_1}$ and $\pi_{\theta_2}$ with parameter $\theta_i=\left(L_i,n_{\sigma_i}, W_i^{1},\cdots,W_i^{L_i+2}\right),\ i=1,2$. Let parameter $K$ in Lemma \ref{lem: QC for K mappings} be equal to $2$, and $T_i^1$, $T_i^2$ in \eqref{eq: R_pi and R_xi 2} are equal to $\mathbf{C}$, $I$, respectively. We suppose that there exists a positive definite matrix $\mathbf{P}\in\mathcal{L}(\mathbf{R}^{n_s+n_q})$  such that
    \begin{equation}
        \begin{array}{cc}
            \begin{aligned}
                D_1 \triangleq 
                \widehat{R_{\bm{\pi}}}^{\top}
                \left[
                    \begin{array}{cc}
                        \mathbf{A}_{\epsilon}^{\top}\mathbf{P} +\mathbf{P}\mathbf{A}_{\epsilon} & -\mathbf{P}\\
                        -\mathbf{P}&O
                    \end{array}
                \right]&\widehat{R_{\bm{\pi}}} +\\
                \widehat{R_{\xi}}^{\top}\mathbf{\Psi}(2)^{\top}& \mathbf{M}(2) \mathbf{\Psi}(2)\widehat{R_{\xi}} \prec O.
            \end{aligned}\\
        \end{array}
        \label{eq: LMI for non-integral chain systems}
    \end{equation}
   
    Under Assumption \ref{ap: the uncertainty for nonintegral-chain systems} aforementioned, then for $\epsilon >0$, the system \eqref{eq: nonintegral-chain systems} is neural observable in the following sense:
        \begin{itemize}
            \item[$\bullet$]$\lim_{\epsilon \to 0^+} \|x_i(t)-\hat{x}_i(t)\|_2=0$ for all $t\in\mathcal{T}_{\geq a},\ a>0.$
            \item[$\bullet$] $\overline{\lim} _{t \to \infty}\|x_i(t)-\hat{x}_i(t)\|_2\leq O(\epsilon^{2-i})$.
        \end{itemize}
    \label{theo: LMI for nonintegral-chain systems}
    \end{theorem}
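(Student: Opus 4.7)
The plan is to mirror the extended-state-observer argument of Theorem \ref{theo: LMI for integral-chain systems}, adapted to the MIMO setting. First, I would treat the uncertainty as a fictitious state by setting $x_2(t) \triangleq \mathcal{K}(x(t),w(t),t)$, so that \eqref{eq: nonintegral-chain systems} becomes the augmented system
$$
\dot{\widetilde{x}} = \mathbf{A}\widetilde{x} + [Bu^\top,\, 0]^\top + \mathbf{h}(t), \qquad y = \mathbf{C}\widetilde{x},
$$
where $\widetilde{x} = [x^\top, x_2^\top]^\top$, $\mathbf{h}(t) = [0,\, \dot{\mathcal{K}}^\top]^\top$, and $\mathbf{A}, \mathbf{C}$ are as in Assumption \ref{ap: the uncertainty for nonintegral-chain systems} (1). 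Here $\dot{\mathcal{K}}(t) = \partial_t \mathcal{K} + \nabla_x\mathcal{K}\cdot\dot{x} + \nabla_w\mathcal{K}\cdot\dot{w}$ is uniformly bounded in $t$ by Assumption \ref{ap: the uncertainty for nonintegral-chain systems} (2)--(3) combined with the standing hypothesis $\sup_t\|(w,\dot{w})\|<\infty$.

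Next, I would introduce the scaled observer errors $\eta_1 = \epsilon^{-1}(x-\widehat{x}_1)$ and $\eta_2 = x_2-\widehat{x}_2$, stacked as $\eta = [\eta_1^\top,\eta_2^\top]^\top$. Subtracting \eqref{eq: nos for nonintegral-chain linear systems} from the augmented plant and multiplying through by $\epsilon$ yields the error dynamics
$$
\epsilon\, \dot{\eta} = \mathbf{A}_\epsilon\, \eta - \bm{\pi_\theta}(\mathbf{C}\eta) + \epsilon\, \mathbf{h}(t),
$$
which is precisely why $\mathbf{A}_\epsilon$ appears inside the LMI \eqref{eq: LMI for non-integral chain systems}. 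Picking $V(\eta) = \eta^\top\mathbf{P}\eta$ and applying the NN isolation representation (with $K=2$, $T_i^1=\mathbf{C}$, $T_i^2=I$) so that $[\eta^\top,\bm{\pi_\theta}^\top]^\top = \widehat{R_{\bm{\pi}}}[\eta^\top,(w_\sigma^K)^\top]^\top$, differentiation gives
$$
\epsilon\, \dot{V} = \begin{bmatrix}\eta \\ w_\sigma^K\end{bmatrix}^{\!\top} \widehat{R_{\bm{\pi}}}^\top \begin{bmatrix}\mathbf{A}_\epsilon^\top\mathbf{P}+\mathbf{P}\mathbf{A}_\epsilon & -\mathbf{P}\\ -\mathbf{P} & O\end{bmatrix}\widehat{R_{\bm{\pi}}} \begin{bmatrix}\eta \\ w_\sigma^K\end{bmatrix} + 2\epsilon\, \eta^\top\mathbf{P}\mathbf{h}(t).
$$
Adding the nonnegative quantity furnished by Lemma \ref{lem: QC for K mappings} and invoking \eqref{eq: LMI for non-integral chain systems} produces a constant $\mu>0$ (one may take $\mu=-\lambda_{\max}(D_1)$) with $\epsilon\, \dot{V} \leq -\mu\|\eta\|_2^2 + 2\epsilon\, \eta^\top\mathbf{P}\mathbf{h}(t)$.

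From here I would absorb the cross term with Young's inequality to obtain
$$
\dot{V} \leq -\frac{\mu}{2\epsilon\,\lambda_{\max}(\mathbf{P})}\, V + \frac{2\epsilon\,\|\mathbf{P}\|_2^2\,\sup_t\|\mathbf{h}\|_2^2}{\mu},
$$
which integrates to $\|\eta(t)\|_2^2 \leq c_1\exp(-c_2 t/\epsilon)\|\eta(0)\|_2^2 + c_3\epsilon^2$. Translating back through $x-\widehat{x}_1 = \epsilon\,\eta_1$ and $x_2-\widehat{x}_2 = \eta_2$: for any fixed $a>0$ the exponential term vanishes as $\epsilon\to 0^+$, giving the first bullet, while the asymptotic residual yields $\overline{\lim}_{t\to\infty}\|x_i-\widehat{x}_i\|_2 = O(\epsilon^{2-i})$, giving the second. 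I expect the main technical obstacle to be the uniform-in-time boundedness of $\mathbf{h}$: it requires $x(t)$ to stay bounded along the closed loop, which is exactly what Assumption \ref{ap: the uncertainty for nonintegral-chain systems} (3) is imposed for. A secondary subtlety is that the observability fact distilled in Lemma \ref{lem: Positive definite under perturbation} is implicitly needed to ensure that the LMI \eqref{eq: LMI for non-integral chain systems} admits a solution $\mathbf{P}\succ O$, in complete analogy with Propositions \ref{prop: existence of solution of LMI}--\ref{prop: existence of solution of LMI 2}.
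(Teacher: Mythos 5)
Your proposal is correct and follows essentially the same route as the paper's proof: the same extension of the state by $x_2=\mathcal{K}$, the same $\epsilon$-scaled error coordinates leading to $\mathbf{A}_\epsilon$, the same Lyapunov function $V=\bm{\eta}^\top\mathbf{P}\bm{\eta}$ combined with Lemma \ref{lem: QC for K mappings} and the LMI, and the same Gronwall-type conclusion. The only cosmetic differences are that the paper additionally rescales time via $\eta_i(t)=e_i(\epsilon t)/\epsilon^{2-i}$ (equivalent to your keeping $\epsilon$ on the left of the error ODE) and closes the estimate with a linear differential inequality for $\sqrt{V}$ rather than your Young's-inequality step on $V$, both yielding the same final bound.
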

    \begin{proof}
        {\color{black} The proof is provided in Appendix \uppercase\expandafter{\romannumeral3}.}
    \end{proof}
    \textcolor{black}{
    \begin{remark}
        For \eqref{eq: LMI for non-integral chain systems}, since $(\mathbf{C}, \mathbf{A}_{\epsilon})$ is observable from Lemma \ref{lem: Positive definite under perturbation}, we can construct $W^{L+2}=\left[(W^{L_1+2}_1)^{\top},(W^{L_2+2}_2)^{\top}\right]^{\top}$ such that $\mathbf{A}_{\epsilon}+\widehat{R_{\bm{\pi}x}}=\mathbf{A}_{\epsilon}+W^{L+2}\mathbf{C}$ is Hurwitz. Subsequently, one can verify the existence of LMI \eqref{eq: LMI for non-integral chain systems} solutions $\mathbf{P}$ via a similar process with Proposition \ref{prop: existence of solution of LMI}-\ref{prop: existence of solution of LMI 2}.
    \label{rem: existence of solution of LMI 4}
    \end{remark}}

\section{Numerical Experiments}
\label{sec: Numerical Experiments}
    We apply the neural observers for three different dynamical models to demonstrate the effectiveness of our proposed analyses. In these examples, the LMIs \eqref{eq: LMI for LTI with control}, \eqref{eq: LMI for integral-chain systems}, and \eqref{eq: LMI for non-integral chain systems} are solved using the LMI Toolbox in MATLAB R2021a.
    \subsection{Linearized Aerodynamic Models of the X-29A Aircraft}
    We implement the neural control framework combining the neural observer \eqref{eq: nos for linear systems} and the NN controller \eqref{eq: controller for linear systems} to the X-29A aircraft, which is formulated in the following state-space form:
    \begin{equation*}
	\left\{
	\begin{aligned}
	\dot{x} =&Ax+Bu+w,\ x(0)\in\mathbf{R}^4,\\
	y = & Cx+v,
	\end{aligned}
	\right.
	\end{equation*}
    where the nominal system matrices $A$, $B$, and $C$ satisfying Assumption \ref{ap: control-observe condition} can be obtained from Table 9 in \cite{bosworth1992linearized}; $w\sim N(0,\frac{1}{10}I)$ and $v\sim N(0,\frac{1}{10}I)$ are process noises. The NNs $\pi_{\theta_i},i=1,2$ in \eqref{eq: nos for linear systems} and \eqref{eq: controller for linear systems} are both parameterized by three hidden layers ($n_1=n_2=n_3=3$) with $\operatorname{ReLU}$ / $\operatorname{tanh}$ as the activation function for all layers. 
    We {\color{black} further perform a comparison between neural observers with different activation functions ($\hat{x}_{i,R}$ and $\hat{x}_{i,T}$, denoting $\operatorname{ReLu}$ and $\operatorname{tanh}$ activations, respectively)} and {\color{black}the Kalman filter ($\hat{x}_{i,K}$)}, where $i=1,\cdots,4$. {\color{black}All initial values are set to be $\hat{x}(0)=0$.}
    \begin{figure}[t]
        \centering
        \includegraphics[scale=.25]{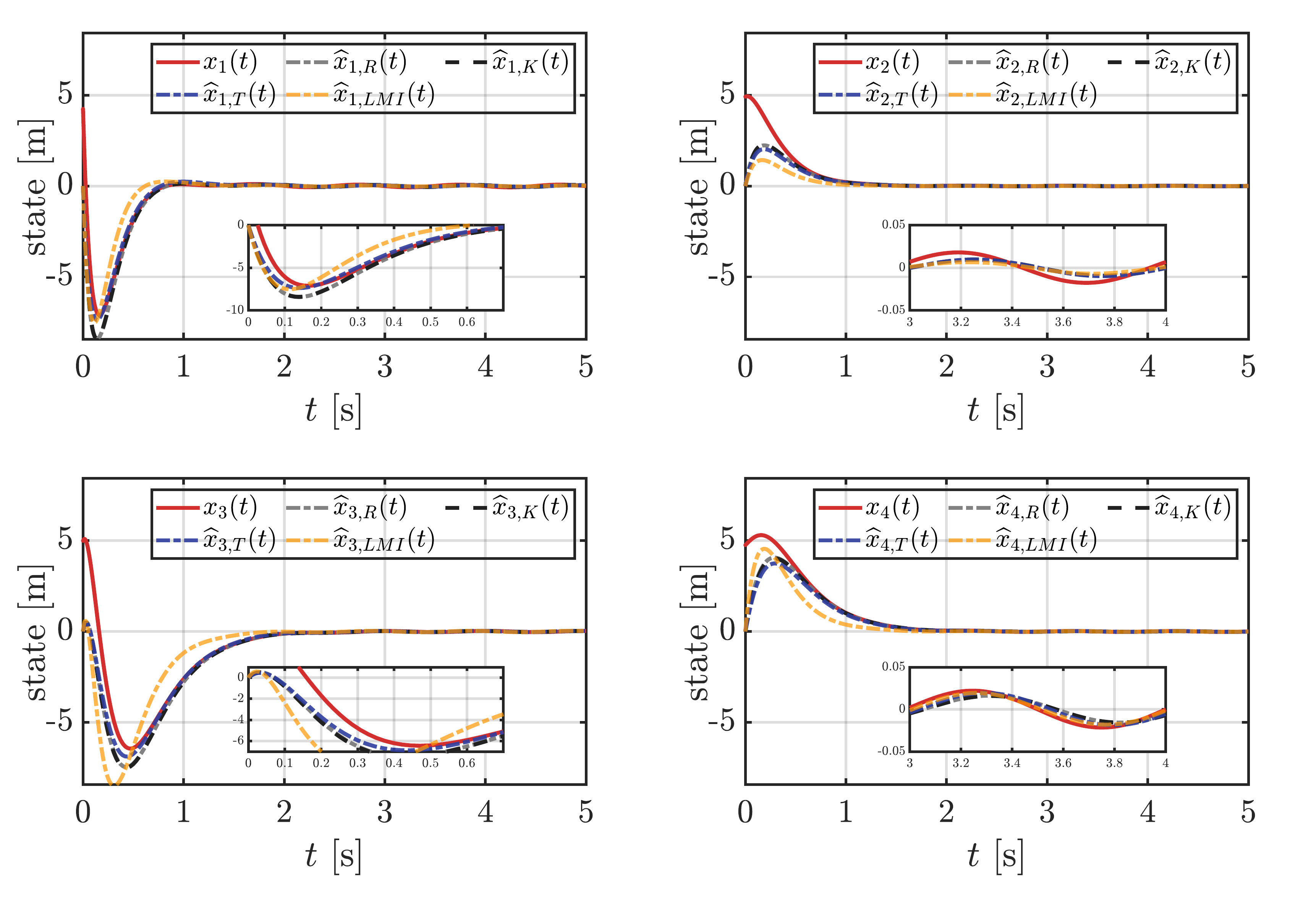}
        \caption{The state trajectories of X-29 aircraft under the neural control framework. \textbf{Red solid  line} depicts the state of the nominal system. \textbf{Grey and blue dotted lines} depict the {\color{black}estimated state} of neural observers equipped with $\operatorname{ReLu}$ and $\operatorname{tanh}$ as activation functions, respectively. \textbf{Black dotted line} represents the {\color{black}estimated state} based on the Kalman filter. \textbf{Orange dotted line} represents the {\color{black}estimated state} generated from a neural observer \eqref{eq: nos for linear systems} that dissatisfies the LMI \eqref{eq: LMI for LTI with control}.}
        \label{fig: X-29A aircraft}
    \end{figure}
    The system response and the output of the neural observer are depicted in Fig. \ref{fig: X-29A aircraft}. It is shown that $x_i(t),i=1,\cdots,4$ all converge to a tiny neighbourhood of $0$ and are well estimated by $\widehat{x}_{i,R}(t), \widehat{x}_{i,T}(t), \widehat{x}_{i,K}(t), i=1,\cdots,4$. In addition, the different choices of activation functions in neural observers only have slight impact on the observation in this scene. {\color{black} It is worth mentioning that the {\color{black}state} can be also estimated by $\widehat{x}_{i,LMI}(t), i=1,\cdots,4$, which are generated from a neural observer \eqref{eq: nos for linear systems} with $\hat{x}(0)=\bm{0}_4$ that dissatisfies the LMI \eqref{eq: LMI for LTI with control}, indicating that the LMI criterion \eqref{eq: LMI for LTI with control} for neural observers is overly conservative.}

    \subsection{A Second-order Nonlinear Model of the Inverted Pendulum}
    Next, to show the effectiveness of neural observers for \textcolor{black}{integrator chain} nonlinear systems, we consider the control of the nonlinear inverted pendulum system formulated by $\ddot{\theta}(t)=\frac{m g l \sin (\theta(t))-\varsigma_0 \dot{\theta}(t)+u(t)+w(t)}{m l^{2}}$, where $\theta(t)$ is the angular position (rad), and $w(t)=\sum_{i=1}^p a_i \sin(b_i t+\phi_i)$ is the external disturbance. By denoting $x_1=\theta$ and $x_2=\dot{\theta}$, we rewrite state-space form for inverted pendulum system
    \begin{equation*}
	\left\{
	\begin{aligned}
	\dot{x_1} =&x_2,\ x_1(0)=x_{1,0},\\
	\dot{x_2}=&\frac{m g l \sin (x_1)-\varsigma_0 x_2+u(t)+w(t)}{m l^{2}},\ x_2(0)=x_{2,0},\\
	y =& x_1,
	\end{aligned}
	\right.
	\end{equation*}
	 where $m$, $l$, $\varsigma_0$ represent the mass (kg), the length (m), and the friction coefficient (Nms/rad), respectively. However, $m=m_0+\delta_m m_0$ and $l=l_0+\delta_l l_0$ are the uncertain parameters, where $m_0,l_0$ denote the nominal value and $\delta_m,\delta_l$ are parameter perturbation coefficients sketching the uncertainty of parameters. Without loss of generality, we consider that $m_0=1$, $l_0=1$, $|\delta_l|,|\delta_m|\in [0,0.1]$, $\varsigma_0=0.5$, and $w(t)=0.1\sin(4\pi t)+0.2\cos(2\pi t)+0.2\sin(3\pi t-\pi/7)$. The following neural observer is designed without involving the parameters $m$, $l$ and $\varsigma_0$:
	\begin{equation*}
	\left\{
	\begin{aligned}
	\dot{\widehat{x}}_1&=\widehat{x} _{2}+\epsilon^{1}\pi_{\theta_1}(\epsilon^{-2}(y-\widehat{y})),\ {\color{black}\widehat{x}_1(0)=0,}\\
    \dot{\widehat{x}}_2&=\widehat{x} _{3}+\pi_{\theta_2}(\epsilon^{-2}(y-\widehat{y}))+bu,\ {\color{black}\widehat{x}_2(0)=0,} \\
    \dot{\widehat{x}}_{3}&=\epsilon^{-1}\pi_{\theta_{3}}(\epsilon^{-2}(y-\widehat{y})),\ {\color{black}\widehat{x}_3(0)=0,}\\
	\widehat{y}& = \widehat{x}_1,
	\end{aligned}
	\right.
	\end{equation*}
	and the feedback control law $u(t)$ (Nm) is designed by $u(t)=\rho \sum_{i=1}^2 k_i \operatorname{sat}_{M_i}(\rho^{n-i}\widehat{x}_i(t))-\operatorname{sat}_{M_{3}}(\widehat{x}_{3}(t))$.
	
    In the corresponding neural observer \eqref{eq: nos for integral-chain nonlinear systems}, we design that (\romannumeral1) the gain $\epsilon=0.1$; (\romannumeral2) the NN $\pi_{\theta_i},i=1,\cdots,3$ are all parameterized by two hidden layers ($n_1=3$ and $n_2 = 2$) with $\tanh$ as the activation function for all layers. As for the control law, we set $\rho=1$, $k_1=-25,k_2=-10$, and $M_1=M_2=M_3=10$ (More details about the parameters setting can be seen in \cite{zhao2016active}). {\color{black} We also compare the above neural observer with the gain scheduled Luenberger observers (GSLO) \cite{benavides2014gain}, which is designed by involving $m_0,l_0$ and selecting $\delta_m=\delta_l=0.1$ and zero initial value.}
    \begin{figure}[t]
        \centering
        \includegraphics[scale=.3]{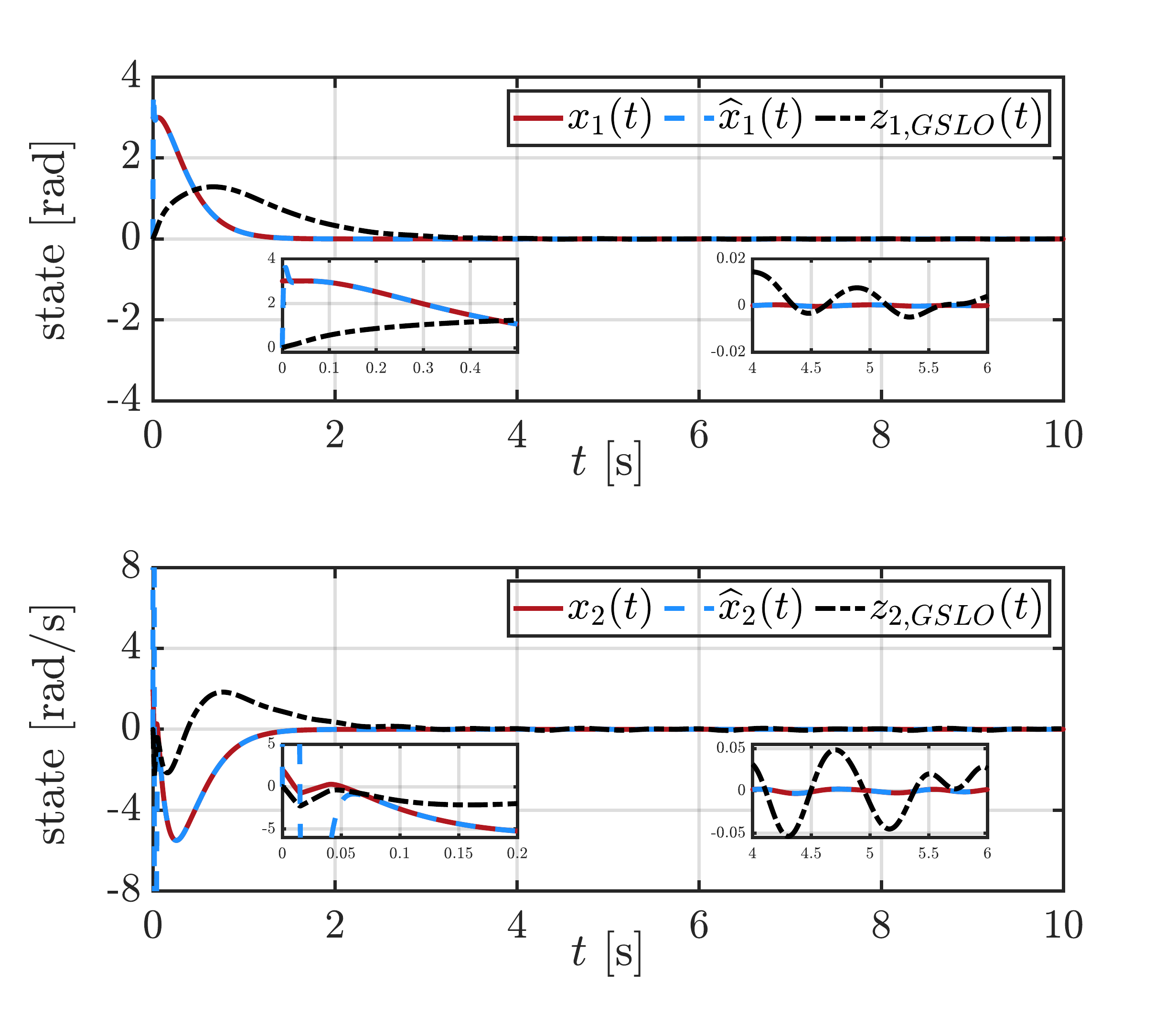}
        \caption{The state trajectories of the inverted pendulum. \textbf{Red solid  line} depicts the state $x_i,i=1,2$ of the nominal system. \textbf{Blue dotted lines} depicts the {\color{black}estimated state} $\widehat{x}_i,i=1,2$ of the neural observer. \textbf{Black chain line} represents the {\color{black}estimated state} $z_{i,GSLO},i=1,2$ based on the gain scheduled Luenberger observer~\cite{benavides2014gain}.}
        \label{fig: inverted pendulum}
    \end{figure}
    
    \begin{figure}[t]
        \centering
        \includegraphics[scale=.35]{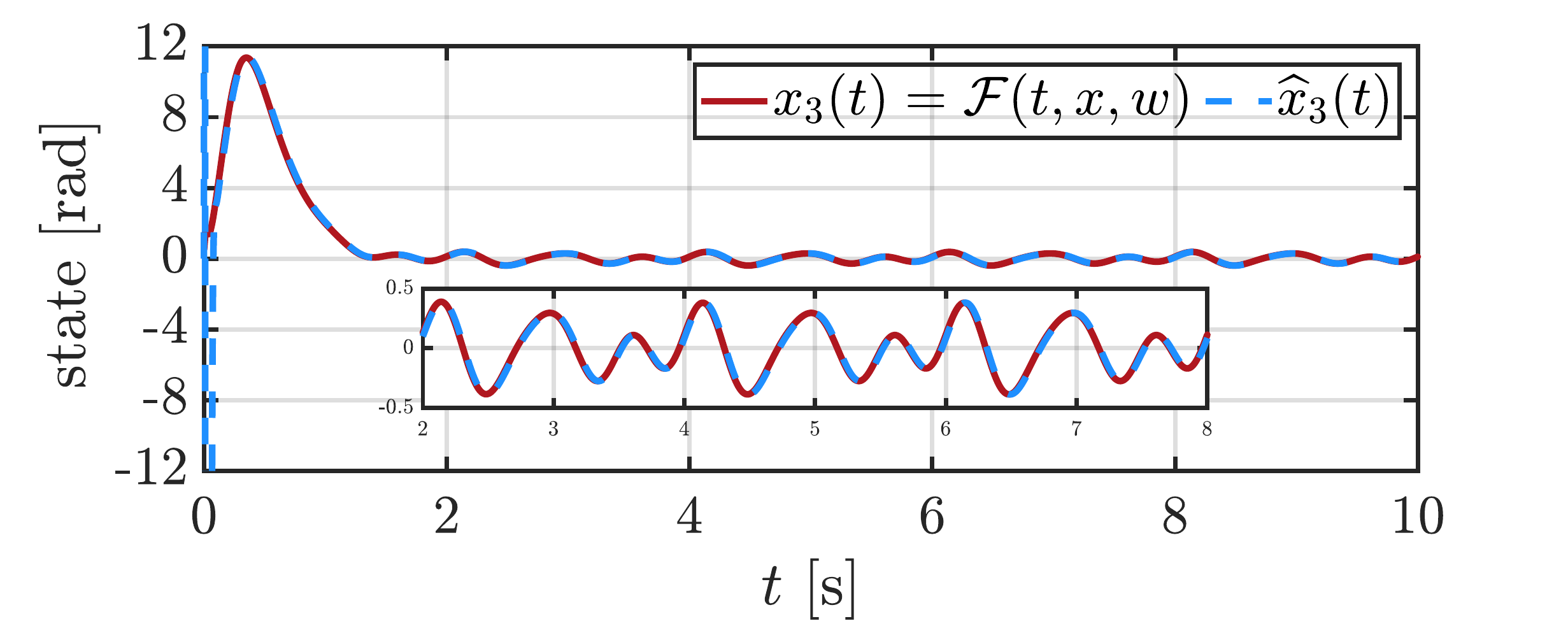}
        \caption{The trajectory of extended state $x_3=\frac{m g l \sin (x_1)-\varsigma_0 x_2+w(t)}{m l^{2}}$ of the inverted pendulum. The red and blue dotted lines depict the truth and the estimation from neural observer, respectively. }
        \label{fig: inverted pendulum uncertainty}
    \end{figure}
    
    As illustrated in Figs.~\ref{fig: inverted pendulum}-\ref{fig: inverted pendulum uncertainty}, we can conclude that the neural observer is not only more effective than the GSLO in tracking the state $x_1, x_2$, but also can estimate the extended state $x_3$ (total disturbance), which is not possible for GSLO.
    
    \subsection{The Dynamics of the Four-wheel Steering Vehicle}
    
    Finally, we implement the proposed neural observers \eqref{eq: nos for nonintegral-chain linear systems} to the four-wheel steering vehicle, which is modeled as a linear dynamic with a general uncertainty~\cite{alleyne1997comparison}:
    \begin{equation*}
        \left\{
        \begin{aligned}
            \dot{x}&=Ax+Bu+B_w\mathcal{K}(x,w,t),\ x(0)=x_0,\\
            y&=Cx,
        \end{aligned}
        \right.
    \end{equation*}
    where $x=[e,\dot{e},\Delta \psi, \dot{\Delta \psi}]$, with $(e,\Delta \psi)$ are defined as the perpendicular distance to the lane edge and the angle between the tangent to the straight section of the road; 
    $C=I$; 
    $A$, $B$, $B_w$, and $\mathcal{K}(x,w,t)$ are defined as follows:
    $$
    A = \left[\begin{array}{cccc}
    0 & 1 & 0 & 0 \\
    0 & \frac{C_{\alpha f}+C_{\alpha r}}{m U} & -\frac{C_{\alpha f}+C_{\alpha r}}{m} & \frac{a C_{\alpha f}-b C_{\alpha r}}{m U} \\
    0 & 0 & 0 & 1 \\
    0 & \frac{a C_{\alpha f}-b C_{\alpha r}}{I_{z} U} & -\frac{a C_{\alpha f}-b C_{\alpha r}}{I_{z}} & \frac{a^{2} C_{\alpha f}+b^{2} C_{\alpha r}}{I_{z} U}
    \end{array}\right],
    $$
    $$
    B = \left[\begin{array}{cc}
    0 & 0\\
    -\frac{C_{\alpha f}}{m} & -\frac{C_{\alpha r}}{m}\\
    0 & 0 \\
    -\frac{a C_{\alpha f}}{I_{z}} & \frac{b C_{\alpha f}}{I_{z}}
    \end{array}\right],\ B_w=\left[\begin{array}{cc}
    0 & 0\\
    1 & 0\\
    0 & 0\\
    0 & 1
    \end{array}\right].
    $$
    $$
    \mathcal{K}(\cdot)=\left[
    \begin{array}{c}
    0.1\sin(4t)+0.3\cos(2\pi t)+\frac{a C_{\alpha f}-b C_{\alpha r}-mU^{2}}{m\rho_c}\\
    0.2\cos(5t)+0.1\cos(6\pi t)+\frac{a^{2} C_{\alpha f}+b^{2} C_{\alpha r}}{I_{z}\rho_c}
    \end{array}\right].
    $$
    For simplicity, we denote that $x=[x_1,\cdots,x_4]^{\top}$. The parameters $C_{\alpha f},C_{\alpha r}, m,U,I_z,a,b$ represent the front cornering stiffness (N/rad), rear cornering stiffness (N/rad), mass (kg), longitudinal velocity (m/s), the moment of inertia $\operatorname{(kg/m^2)}$, distances from vehicle center of gravity to the front axle and rear axle, respectively, which are chosen to the nominal values obtained from Appendix A in \cite{alleyne1997comparison}. The constant road curvature $\rho_c$ in $\mathcal{K}(\cdot)$ can be chosen to be 400 (meters). 

    \begin{figure}[ht]
        \centering
        \includegraphics[scale=.25]{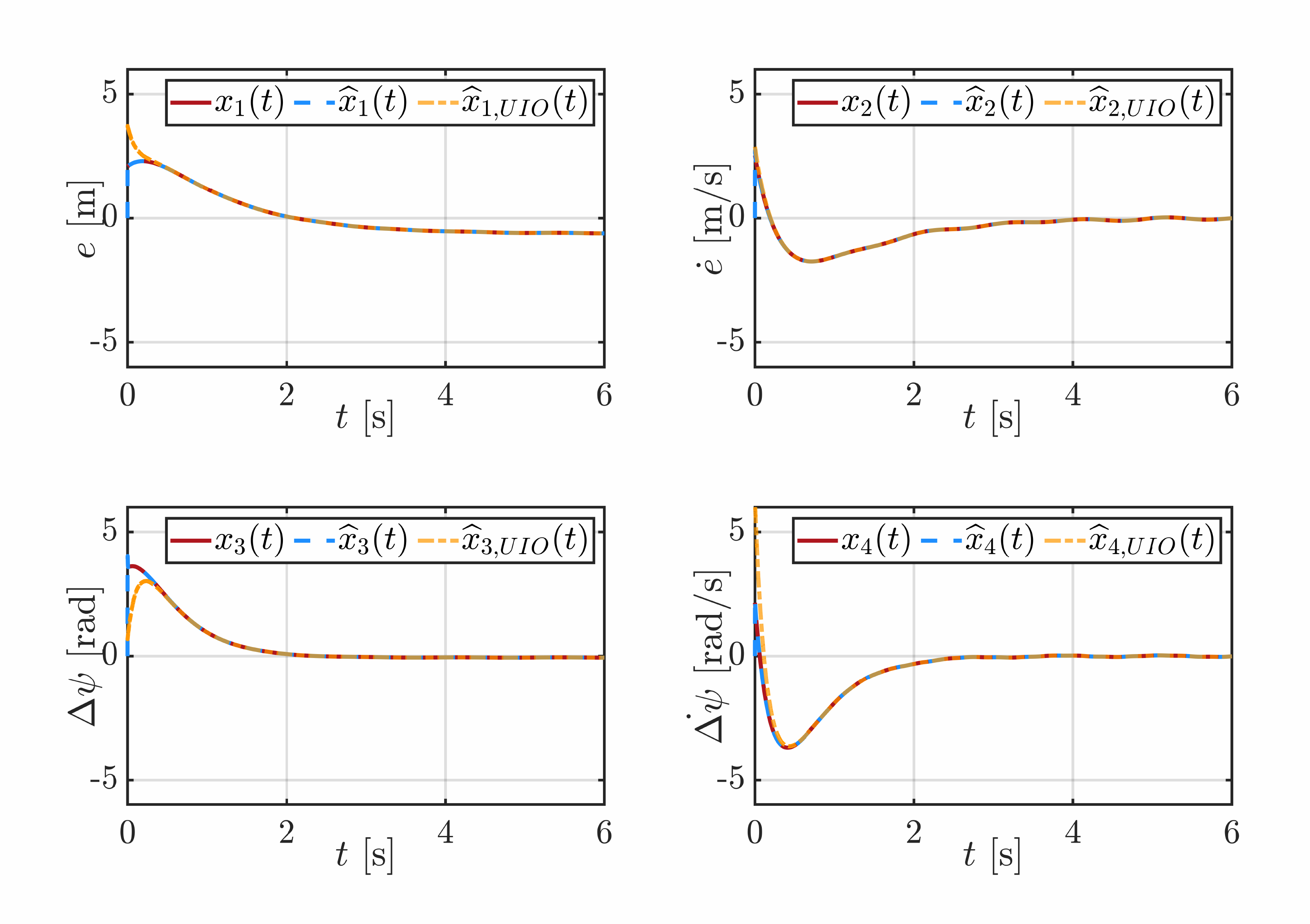}
        \caption{The trajectory of state $x_i(t),i=1,\cdots,4$ of the four-wheel steering wehicle under the neural observer. \textbf{Red solid line} represents the state of the nominal system. \textbf{Blue dotted lines} and \textbf{Orange dotted line} represent the {\color{black}estimated state} of the neural observer and the UIO, respectively. }
        \label{fig: Four-wheel Steering Vehicle}
    \end{figure}
    \begin{figure}[ht]
        \centering
        \includegraphics[scale=.25]{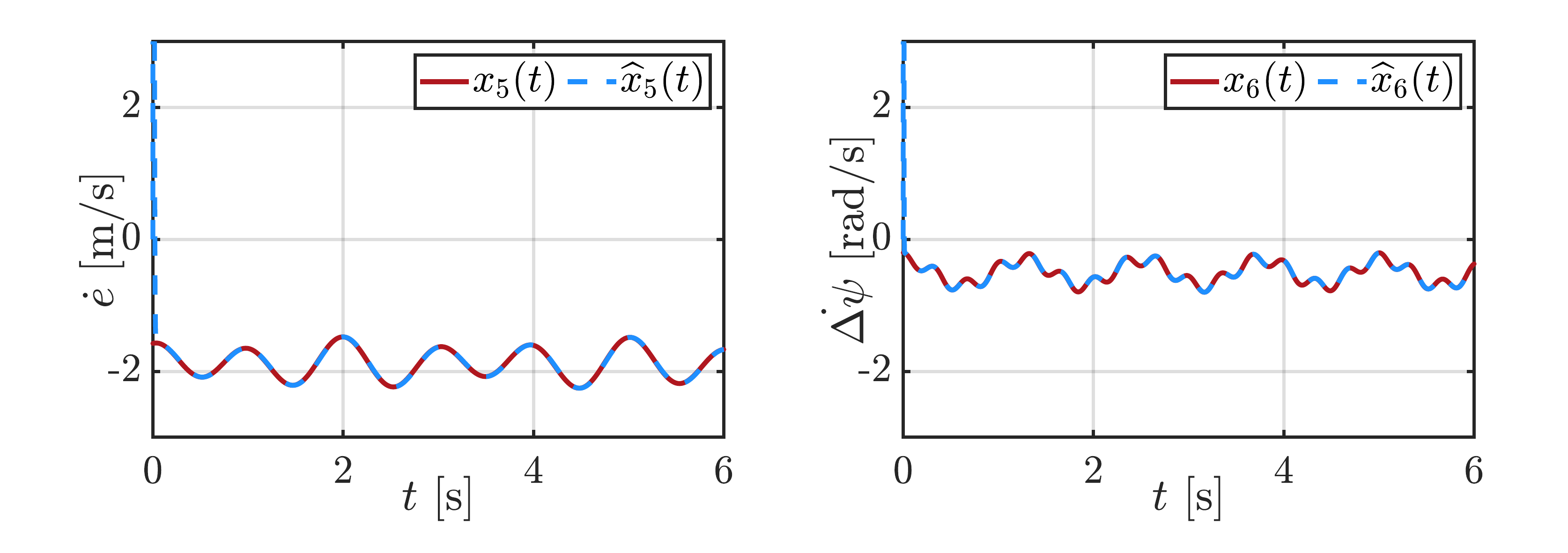}
        \caption{The estimation of the extended state $\mathcal{K}(x,w,t)\triangleq[x_5,x_6]^{\top}$ of the four-wheel steering vehicle under the neural observer.}
        \label{fig: Four-wheel Steering Vehicle uncertainty}
    \end{figure}
    
    Likewise, the corresponding neural observer \eqref{eq: nos for nonintegral-chain linear systems} is designed by  (\romannumeral1) the NNs $\pi_{\theta_i},i=1,2$ are parameterized by three hidden layers ($n_1 = n_2 = n_3 = 3$) with $\operatorname{Leaky\ ReLU}$ as the activation function for all layers; (\romannumeral2) the gain $\epsilon=0.1$; (\romannumeral3) {\color{black}$\widehat{x}_1(0)=\bm{0}_4,\widehat{x}_2(0)=\bm{0}_2$}. The control input is given by the output-feedback control $u=Gy$, where $G$ is designed by the matrix $A+BG$ and is Hurwitz. Hence, it is easy to check the boundedness of the state $x$ and the input $u$. In addition, $(\mathbf{C},\mathbf{A}_{\epsilon})$ is observable, indicating the system complies the whole Assumption \ref{ap: the uncertainty for nonintegral-chain systems}. {\color{black} Furthermore, since $\operatorname{r}(CB_w)=\operatorname{r}(B_w)$, we can apply the unknown input observer (UIO) for comparison with the neural observer \cite{darouach1994uio},} which is described as
    {\color{black} 
    $$
    \left\{\begin{aligned}
    \dot{z}&=N z+L y+G u,\ z(0)=\bm{0}_4, \\
    \hat{x}_{\text{UIO}}&=z-E y,
    \end{aligned}\right.
    $$
    where the matrices $N,L,G,E$ are given by (6)-(12) in \cite{darouach1994uio}. Then, the state $\hat{x}_{\text{UIO}}$ can be the estimate of $x$. We notice that we can simply set the initial value of $\hat{x}_{\text{UIO}}(0)$ by adopting $z(0)=\bm{0}_4$ to reduce the cost of identification of $x(0)$.}
    
    As shown in Fig.~\ref{fig: Four-wheel Steering Vehicle}, the state $x$ can be well-estimated by $\widehat{x}$ with less response time than $\hat{x}_{\text{UIO}}$. Moreover, in Fig.~\ref{fig: Four-wheel Steering Vehicle uncertainty}, the extended states ($x_i,i=5,6$) could be well-estimated by $\widehat{x}_i,i=5,6$ very quickly, which cannot be done by the conventional UIO.

\section{Conclusion and Future work}
    Machine learning meeting control theory is a hot topic worth investigating. In this paper, we creatively introduce the residual neural networks into the design of the observer, called neural observer, and provide the necessary proofs of the convergence.  
    
    More specifically, we propose a new framework to design the neural observers for different dynamical systems, including linear systems and two classes of nonlinear systems with some mild assumptions. The great performance of our proposed observer benefits from the introduction of NNs. Accordingly, we provide specific neural observers for  linear systems, \textcolor{black}{integrator chain} nonlinear systems, and a class of MIMO nonlinear systems composed of a linear dynamic and a general uncertainty. 
    For linear systems, by combining the recent NN controller proposed in \cite{yin2022stability}, we show that the observer could be used in global feedback stabilization. In addition, by using QCs to bound the nonlinear activation functions in NNs, we propose the corresponding LMI conditions for different system settings to achieve neural observability (according to Definition \ref{def: neural observable}). On the other hand, it has also been shown that the observability of system matrices is a necessary condition for the existence of solutions of the aforementioned LMIs. To the best of our knowledge, this is the first time that the neural observability has been discussed theoretically and connected with the observability of a specific system.    

    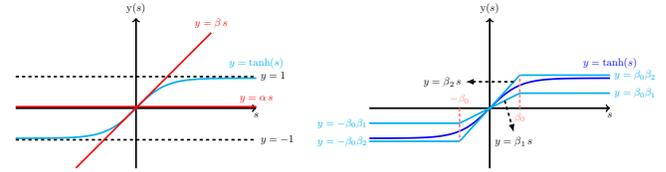
\begin{figure}[t]
      \centering
      \subfigure{\scalebox{0.4}{
           \begin{tikzpicture}
                \centering
                \draw[<->, ultra thick](4,0)node[below] {$s$}--(0,0)--(0,4-1)node[above] {$\operatorname{y}(s)$};
                \draw[ultra thick](-4,0)--(0,0)--(0,-4+2);
                \draw[cyan,domain=-4:4,ultra thick,samples=100] plot(\x,{((e^(1*(\x)))-(e^(-1*(\x))))/((e^(1*(\x)))+(e^(-1*(\x))))}) node at (4,1.5){$y=\tanh(s)$};
                \draw[red,domain=-2:2.5,ultra thick,samples=50] plot(\x,{((1*\x)+0.01)})node[above]{$y=\operatorname{\beta}s$};
                \draw[red,domain=-4:4,ultra thick,samples=50] plot(\x,0.05)node[above]{$y=\operatorname{\alpha}s$};
                \draw[domain=-4:4,ultra thick,samples=50,dashed] plot(\x,1.05)node[right]{$y=1$};
                \draw[domain=-4:4,ultra thick,samples=50,dashed] plot(\x,-1.05)node[right]{$y=-1$};
            \end{tikzpicture} 
        }}
      \subfigure{\scalebox{0.4}{
           \begin{tikzpicture}
                \centering
                \draw[<->, ultra thick](\num,0)node[below] {$s$}--(0,0)--(0,\num-1)node[above] {$\operatorname{y}(s)$};
                \draw[ultra thick](-\num,0)--(0,0)--(0,-\num+2);
                \draw[blue,domain=-4:4,ultra thick,samples=100] plot(\x,{((e^(1*(\x)))-(e^(-1*(\x))))/((e^(1*(\x)))+(e^(-1*(\x))))}) node at (4,1.5){$y=\tanh(s)$};
                
                \draw[cyan,domain=-1:1,ultra thick] plot(\x,{((1.1*\x))});
                \draw[cyan,domain=-1:1,ultra thick] plot(\x,{((0.5*\x))});
                \draw[cyan,domain=1:4,ultra thick] plot(\x,1.1)node[right]{$y=\beta_0\beta_2$};
                \draw[cyan,domain=1:4,ultra thick] plot(\x,0.5)node[right]{$y=\beta_0\beta_1$};
                \draw[cyan,domain=-4:-1,ultra thick] plot(\x,-1.1)node at (-4.9,-1.1){$y=-\beta_0\beta_2$};
                \draw[cyan,domain=-4:-1,ultra thick] plot(\x,-0.5)node at (-4.9,-0.5){$y=-\beta_0\beta_1$};
                \draw[-latex,ultra thick,dashed] (0.8,0.88) -- (-0.8,0.88)node[left]{$y=\operatorname{\beta}_2 s$};
                 \draw[-latex,ultra thick,dashed] (0.5,0.25) -- (0.8,-0.8)node[below]{$y=\operatorname{\beta}_1 s$};
                \draw[red!50,ultra thick,densely dashed] (1,0)node[below]{$\beta_0$}--(1,1.1);
                \draw[red!50,ultra thick,densely dashed] (-1,0)node[above]{$-\beta_0$}--(-1,-1.1);
            \end{tikzpicture} 
        }}
      \caption{\textbf{Left}: Sector constraints on $\tanh(s)$; \textbf{Right}: Piecewise sector constraints on $\tanh(s)$: for $s\in[-\beta_0,\beta_0]$, we have $\left(\tanh(s)-\beta_0 s \right) \left(\beta_1 s-\tanh(s)\right) \geq 0$; for $|s|>\beta_0$, we have $\left(\tanh(s)-\operatorname{sign}(s)\beta_0\beta_2\right) \left(\operatorname{sign}(s)\beta_0\beta_1-\tanh(s)\right) \geq 0$.}
      \label{fig: sector on tanh}
    \end{figure}

    There are some future works that can be done. For instance, we note that the global sector boundedness regarding activation functions introduced in Section \ref{sec: NN Representation and NN mapping vector} is relatively ``strict'', so that some information from activation functions may not be exploited fully. In detail, the left sub-diagram in Fig.~\ref{fig: sector on tanh} shows the global sector using the $\tanh$ function as an example. 
    Although we can describe the activation function $y=\tanh(s)$ roughly by using the open region formed by two straight lines $y=\alpha s, y=\beta s$ passing through the origin, some geometric information about the activation function, such as $\lim_{s\rightarrow +\infty}\tanh(s)=1, \lim_{s\rightarrow -\infty}\tanh(s)=-1, \lim_{s\rightarrow \infty}\frac{\mathrm{d}}{\mathrm{d}s}\tanh(s)=0$, is not fully extracted in Lemmas \ref{lem: QC for one mapping}-\ref{lem: QC for K mappings}.
Since the LMIs \eqref{eq: LMI for LTI without control}, \eqref{eq: LMI for LTI with control}, \eqref{eq: LMI for integral-chain systems} and \eqref{eq: LMI for non-integral chain systems} are all based on Lemmas 1-2, it is obvious that we would ignore some NN architectures, that do not satisfy the LMIs but can still be used in the design of neural observers. 
    Intuitively, we could use the piecewise sectors shown in the right sub-diagram in Fig.~\ref{fig: sector on tanh} to characterize the nonlinear activation functions in the NN, which may make better use of geometric information to improve the results. The remaining question, therefore, arises whether we can find constraint conditions from the piecewise sectors boundedness that can be utilized in neural observers. 
\appendices



\section{Proofs of Propositions}
\subsection{Proposition \ref{prop: ness-cond for extending observable}}
\begin{proof}
    If $A$, $C$, $B_w$ satisfy the extending observable condition, i.e., $(\mathbf{C},\mathbf{A})$ is observable, then for any $s\in \mathbb{C}$, we have
    $$
    \operatorname{r}\left[\begin{array}{cc}
       sI_{n_s}-A & -B_w\\
       O & sI_{n_q}\\
       C& O
    \end{array}\right]=n_s+n_q.
    $$
    We suppose that $(C,A)$ is not observable. Hence, there exists the $s_0\in \mathbb{C}$, such that $\operatorname{r}\left[s_0I_{n_s}-A^{\top},C^{\top}\right]<n_s$. In the case of $s_0\neq0$, we have
    \begin{equation*}
        \begin{aligned}
            \operatorname{r}\left[\begin{array}{cc}
       s_0I_{n_s}-A & -B_w\\
       O & s_0I_{n_q}\\
       C& O
    \end{array}\right]=&\operatorname{r}\left[\begin{array}{cc}
       O & s_0I_{n_q}\\
       s_0I_{n_s}-A & O\\
       C& O
    \end{array}\right]\\
    =&\operatorname{r}\left[s_0I_{n_q}\right]+\operatorname{r}\left[\begin{array}{c}
         s_0I_{n_s}-A  \\
         C 
    \end{array}\right]\\
    <&n_s+n_q.
        \end{aligned}
    \end{equation*}
    And in the case of $s_0=0$, we have
    \begin{equation*}
        \begin{aligned}
            \operatorname{r}\left[\begin{array}{cc}
       s_0I_{n_s}-A & -B_w\\
       O & s_0I_{n_q}\\
       C& O
    \end{array}\right]\leq&\operatorname{r}\left[\begin{array}{c}
         s_0I_{n_s}-A  \\
         C 
    \end{array}\right]+\operatorname{r}\left[B_w\right]\\
    <&n_s+n_q.
        \end{aligned}
    \end{equation*}
    Therefore, the above inequalities lead to a contradiction.
\end{proof}
\subsection{Proposition \ref{prop: existence of solution of LMI}}
\begin{proof}
        First, we unfold and directly compute the matrices in left side in LMI \eqref{eq: LMI for LTI without control} as follows:
        $$
        R_{\pi}^{\top}
        \left[
            \begin{array}{cc}
                A^{\top}P + PA & P\\
                P&O
            \end{array}
        \right]R_{\pi}=
        \left[
            \begin{array}{cc}
                \widetilde{A}^{\top}P+P\widetilde{A} & P N_{\pi w} \\
                \star & O
            \end{array}
        \right],
        $$
        $$
        R_{\xi}^{\top}\Psi_{\sigma}^{\top} M_{\sigma}(\lambda_{\sigma}) \Psi_{\sigma}R_{\xi}=
        \left[
            \begin{array}{cc}
                O & N_{\xi x}^{\top}R_1\\
                \star & \begin{array}{c}
                     N_{\xi w}^{\top}R_1+R_1 N_{\xi w} \\
                     -2\operatorname{diag}(\lambda_{\sigma})
                \end{array}
            \end{array}
        \right].
        $$
        To prove the Proposition \ref{prop: existence of solution of LMI}, we need the following steps.
        
        \textbf{Step 1}: A strictly diagonally dominant diagonal matrix $T = (t_{ij})\in\mathcal{L}(\mathbf{R}^m)$ has positive diagonal entries, which means that for all $i=1,\cdots,m$, we have $|t_{ii}|>\Sigma_{i\neq j}|t_{ij}|$ and $t_{ii}>0$. Then, this matrix is positively definite. Specifically, for all $x\in\mathbf{R}^{m}$,
        $$
        \begin{aligned}
        x^{\top} T x &=\sum_{i=1}^{m} t_{ii} x_{i}^{2}+\sum_{i \neq j} t_{ij} x_{i} x_{j}\\
        &> \sum_{i=1}^{m}\Big(\sum_{i \neq j}\left|t_{ij}\right|\Big) x_{i}^{2}-\sum_{i \neq j}\left|t_{ij}\right|\left|x_{i}\right| \left|x_{j}\right|\\
        &= \sum_{j>i}\Big(\left|t_{ij}\right|\big(x_{i}^{2}+x_{j}^{2}-2\left|x_{i}\right|\left|x_{j}\right|\big)\Big) \geq 0.
        \end{aligned}
        $$
        \textbf{Step 2}: $\bm{\Rightarrow}$ For the sufficiency, due to the observability of $(C,A)$, then $\widetilde{A}=A+N_{\pi x}$ is a Hurwitz matrix by taking the matrix $W^{L+2}$ in $N_{\pi x}=W^{L+2}C$ is a pole assignment matrix for $\widetilde{A}$. Since $\widetilde{A}=A+N_{\pi x}$ is a Hurwitz matrix, we imply that the Lyapunov equation $-(\widetilde{A}^{\top}P+P\widetilde{A})=Q$ has a unique solution $P=\int_0^{\infty}e^{\widetilde{A}^{\top}t}Qe^{\widetilde{A}t}\mathrm{d}t$ that is finite, i.e., $\|P\|_F< \infty$. Therefore, we can rewrite the LMI into
        $$
        \begin{aligned}
        R_{\pi}^{\top}
            \left[
                \begin{array}{cc}
                    A^{\top}P + PA & P\\
                    P&O
                \end{array}
            \right]&R_{\pi}+
            R_{\xi}^{\top}\Psi_{\sigma}^{\top} M_{\sigma}(\lambda_{\sigma}) \Psi_{\sigma}R_{\xi} \\
        \end{aligned}
        $$
        $$
        =-\underbrace{
            \left[
                \begin{array}{cc}
                    Q & M_1 \\
                    M_1^{\top} & 2\operatorname{diag}(\lambda_{\sigma})-M_2
                \end{array}
            \right]}_{\triangleq M_0}.
        $$
        By substituting $N_{\xi w}$ into $M_2$ from above, we can show that $M_2$ is a symmetric matrix with zero diagonal entries. Hence, under the assumptions (\romannumeral1) and (\romannumeral2), the LMI \eqref{eq: LMI for LTI without control} is satisfied due to $M_0$ is strictly diagonally dominant.
        
        $\Leftarrow$ For  the sake of necessity, we assume that $(C,A)$ is unobservable, and there is a matrix $\widetilde{P}$ that makes the LMI \eqref{eq: LMI for LTI without control} accurate. Since $\widetilde{A}$ is not Hurwitz, we imply that all eigenvalues of $-Q=\widetilde{A}^{\top}\widetilde{P} + \widetilde{P}\widetilde{A}$ are not negative, which leads to a contradiction since LMI \eqref{eq: LMI for LTI without control} has no solution. This completes the proof.
    \end{proof}

    \subsection{Proposition \ref{prop: existence of solution of LMI 2}}
\begin{proof}
        \textbf{Sufficiency:} Since $(C, A)$ is observable, and $(A,B)$ is controllable, $\widetilde{A}_1=A+BW_1^{L_1+2}$ and $\widetilde{A}_2=A+W_2^{L_2+2}C$ are two Hurwitz matrices by making $W_1^{L_1+2}$ and $W_2^{L_2+2}$ are pole assignment matrices. Subsequently, it is not difficult to verify that $P_i$ and $Q_i$ satisfy the Lyapunov equation $-(\widetilde{A}^{\top}_iP_i+P_i\widetilde{A}_i)=Q_i,i=1,2$. The matrices on the left side of LMI \eqref{eq: LMI for LTI with control} can be expanded to
        $$
        \widehat{R_{\bm{\pi}}}^{\top}
        \left[
            \begin{array}{cc}
                \widehat{A}^{\top}\widehat{P}  + \widehat{P}\widehat{A} & \widehat{P}\\
                \widehat{P}&O
            \end{array}
        \right]\widehat{R_{\bm{\pi}}}=
        \left[
            \begin{array}{cc}
                -\widehat{Q}+M_3 & \widehat{P}\widehat{N_{\mathbf{\pi} w}}\\
                \star & O
            \end{array}
        \right],
        $$
        $$
        \widehat{R_{\xi}}^{\top}\mathbf{\Psi}(2)^{\top} \mathbf{M}(2) \mathbf{\Psi}(2)\widehat{R_{\xi}}=
        \left[
            \begin{array}{cc}
                O & \widehat{N_{\xi x}}^{\top}R_1\\
                \star & \begin{array}{c}
                    \widehat{N_{\xi w}}^{\top}R_1+R_1 \widehat{N_{\xi w}} \\
                     -2\operatorname{diag}(\lambda^2_{\sigma})
                \end{array}
            \end{array}
        \right].
        $$
        Hence, based on the assumptions (\romannumeral3) and ((\romannumeral4), the LMI \eqref{eq: LMI for LTI with control} is satisfied due to the property of strict diagonal dominance. 

        \textbf{Necessity:} The proof is the same as step 2 in Proposition \ref{prop: existence of solution of LMI}. 
    \end{proof}

\section{Proof of Lemma}

\subsection{Lemma 2}
    \begin{proof}
        The proof is a direct extension of Lemma \ref{lem: QC for one mapping}. Specifically, the left side of above inequality is equivalent to 
        $$
        \sum_{k=1}^K\sum_{i=1}^{n_{\sigma_k}}\lambda_{\sigma_k,i}\underbrace{\left(w_{\sigma_k,i}-\alpha_{\sigma_k,i}\xi_{\sigma_k,i}\right)\left(\beta_{\sigma_k,i}\xi_{\sigma_k,i}-w_{\sigma_k,i}\right)}_{\geq 0,\ \text{due to}\ w_{\sigma_k,i}=\sigma(\xi_{\sigma_k,i})\geq 0.}\geq 0.
        $$
    \end{proof}
\subsection{Lemma \ref{lem: Positive definite under perturbation}}
    \begin{proof}
    The proof of this lemma can be directly provided by 
    $$
    \operatorname{r}\left[\begin{array}{c}
        \mathbf{C}  \\
        \mathbf{C}\mathbf{A}_{\epsilon}\\
        \vdots\\
        \mathbf{C}\mathbf{A}_{\epsilon}^{n_s+n_q-1}
    \end{array}\right]=\operatorname{r}\left[\begin{array}{c}
        \mathbf{C}  \\
        \mathbf{C}\mathbf{A}\\
        \vdots\\
        \mathbf{C}\mathbf{A}^{n_s+n_q-1}
    \end{array}\right]=n_s+n_q.
    $$
    \end{proof}
\section{Proof of Theorem}
\subsection{Theorem \ref{theo: LMI for LTI without control}}
 \begin{proof}
        First, we suppose that the existence of the matrix $P$ is true. Denote $e(t)=\widehat{x}(t)-x(t)$, then from \eqref{eq: linear systems} and \eqref{eq: nos for linear systems}, it is not difficult to obtain
        $$
        \dot{e}(t)=Ae(t) + \pi_{\theta}(Ce(t)).
        $$
        We denote $v(t)\triangleq \pi_{\theta}(Ce(t))$. Equivalently, the form of input of $\pi_{\theta}$ can be regarded as $e(t)$ by updating $W^{L+2}$ and $W^1$ in \eqref{eq: R_pi and R_xi} to $W^{L+2}C$ and $W^1C$, respectively. Recall that $P \succ O$, we define a radially unbounded Lyapunov function $V:\mathbf{R}^{n_s}\rightarrow \mathbf{R},\ e(t)\mapsto e^{\top}(t)Pe(t)$. Then, the time derivative of $V$ along the trajectories of $e(t)$ is given by
        $$
        \begin{aligned}
            \frac{\mathrm{d}V}{\mathrm{d}t}\Big|_{e(t)}=&\dot{e}^{\top}(t)Pe(t)+e^{\top}(t)P\dot{e}(t)\\
            =&(e^{\top}(t)A^{\top}+v^{\top}(t))Pe(t)+e^{\top}(t)P(Ae(t) + v(t))\\
            =&e^{\top}(t)(A^{\top}P+PA)e(t)+2v^{\top}(t)Pe(t)\\
            =&
            [\star]^{\top}
            \left[
                \begin{array}{cc}
                    A^{\top}P + PA & P\\
                    P&O
                \end{array}
            \right]
            \left[\begin{array}{l}
                    e(t) \\
                    v(t)
                \end{array}
            \right],
        \end{aligned} 
        $$
        where ``$\star$'' can be inferred from symmetry. By using the transformation from \eqref{eq: R_pi and R_xi} and the strict LMI \eqref{eq: LMI for LTI without control}, we imply that there exists $\epsilon >0$ such that the left/right multiplication of the LMI by $\left[e^{\top}, w_{\sigma}^{\top}\right]$ and its transpose yields
        $$
        \begin{aligned}
        &{[\star]^{\top}\left[\begin{array}{cc}
            A^{\top}P + PA & P\\
            P&O
        \end{array}\right]\left[\begin{array}{l}
        e(t) \\
        v(t)
        \end{array}\right]} \\
        &+[\star]^{\top} \Psi_{\sigma}^{\top} M_{\sigma}(\lambda_{\sigma}) \Psi_{\sigma}\left[\begin{array}{c}
        {\xi}_{\sigma}(t) \\
        w_{\sigma}(t)
        \end{array}\right] \leq-\epsilon(\left\|e(t)\right\|_2^{2}+\left\|v(t)\right\|_2^{2}).
        \end{aligned}
        $$
        Therefore, by using Gronwall-Bellman inequality \footnote{We consider that $u(t)\in C^1(\mathcal{T}_{\geq 0})$ and Lemma \ref{lem: QC for one mapping}, $c(t)$ and $f(t)$ are continuous functions defined in $t\in\mathcal{T}_{\geq 0}$. If $\dot{u}\leq c(t)u(t)+f(t)$ for all $t\in\mathcal{T}_{\geq 0}$, then we have $u(t) \leq u(0) e^{\int_{0}^{t} c(\rho) d \rho}+\int_{0}^{t} f(s) e^{\int_{s}^{t} c(\rho) d \rho} d s, t \in \mathcal{T}_{\geq 0}$.}, we deduce that $\frac{\mathrm{d}V}{\mathrm{d}t}\Big|_{e(t)}\leq -\epsilon\left\|e(t)\right\|_2^{2}\leq -\frac{\epsilon V(e(t))}{\lambda_{\max}(P)}$, which in turn gives
        $$
        \begin{aligned}
        \lambda_{\min}(P)\|e(t)\|_2^{2} & \leq V(e(t)) \leq e^{-\frac{\epsilon t}{\lambda_{\max }(P)}} V(e(0)) \\
        & \leq \lambda_{\max }(P) e^{-\frac{\epsilon t}{\lambda_{\max }(P)}}\|e(0)\|_2^{2}.
        \end{aligned}
        $$
        As a consequence, we obtain
        $$
        \begin{aligned}
        \|x(t)-\widehat{x}(t)\|_2 \leq& \overbrace{\sqrt{\lambda_{\max }(P) / \lambda_{\min }(P)}}^M e^{-\frac{\epsilon t}{2 \lambda_{\max }(P)}}\|e(0)\|_2\\
        \leq& M e^{-\frac{\epsilon t}{2 \lambda_{\max }(P)}}\{\|x(0)\|_2+\|\widehat{x}(0)\|_2\}.
        \end{aligned}
        $$
        This completes the proof of Theorem \ref{theo: LMI for LTI without control}.
    \end{proof}

\subsection{Theorem \ref{theo: LMI for LTI with control}}

\begin{proof}
       We consider the radially unbounded function $V:\mathbf{R}^{2n_s}\rightarrow \mathbf{R},$ $ \bm{x}(t)\mapsto \bm{x}^{\top}(t)\widehat{P}\bm{x}(t)$ as a candidate Lyapunov function for above system. Therefore, the time derivative of $V$ along the trajectories of \eqref{eq: system and nos for LTI} is given by 
        $$
        \frac{\mathrm{d}V}{\mathrm{d}t}\Big|_{\bm{x}(t)}=
        [\star]^{\top}
            \left[
                \begin{array}{cc}
                    \widehat{A}^{\top}\widehat{P} + \widehat{P}\widehat{A} & \widehat{P}\\
                    \widehat{P}&O
                \end{array}
            \right]
            \left[\begin{array}{l}
                    \bm{x}(t) \\
                    v(\bm{x})
                \end{array}
            \right].
        $$
        Due to the strictness of LMI \eqref{eq: LMI for LTI with control} and Lemma \ref{lem: QC for K mappings}, by left/right multipling the vector $\left[\bm{x}^{\top}, \left(w^2_{\sigma}\right)^\top\right]$ and its transpose, we know that there exists $\epsilon>0$ such that for all $\bm{x}\in\mathbf{R}^{2n_s}$, 
        $$
        \begin{aligned}
            \frac{\mathrm{d}V}{\mathrm{d}t}\Big|_{\bm{x}(t)}\leq&-\underbrace{[\star]^{\top} \mathbf{\Psi}(2)^{\top} \mathbf{M}(2) \mathbf{\Psi}(2)\left[\begin{array}{c}
                {\xi}_{\sigma}^2(t) \\
                w_{\sigma}^2(t)
                \end{array}\right]}_{\geq 0}-\epsilon\|\bm{x}(t)\|^2_2\\
                \leq&-\epsilon\|\bm{x}(t)\|^2_2.
        \end{aligned}
        $$
        Similarly, we have
        $$
        \left\|\bm{x}(t)\right\|_2\leq\sqrt{\frac{\lambda_{\max}(\widehat{P})}{\lambda_{\min}(\widehat{P})}}e^{-\frac{\epsilon t}{2\lambda_{\max}(\widehat{P})}}\left\|\bm{x}(0)\right\|_2.
        $$
        Notice that $\bm{x}^{\top}(t)=[x_1(t)^{\top},x_2(t)^{\top}]$ and that $x_1(t) = x(t)$, and $x_2(t)=\widehat{x}(t)-x(t)$. It is easy to obtain that
        $$
        \|x(t)\|_2+\|x(t)-\widehat{x}(t)\|_2\leq Me^{-\kappa t}\left\{\|x(0)\|_2+\|\widehat{x}(0)\|_2\right\},
        $$
        where $M=2\sqrt{\frac{2\lambda_{\max}(\widehat{P})}{\lambda_{\min}(\widehat{P})}}$, and $\kappa=\frac{\epsilon}{2\lambda_{\max}(\widehat{P})}$. Hence, the system \eqref{eq: linear systems} is {\color{black}neural exponentially observable}, and the state $x(t)$ converges to $0$ as $t \rightarrow \infty$ exponentially, which leads to the completeness of the proof.
    \end{proof}

\subsection{Theorem \ref{theo: LMI for integral-chain systems}}

\begin{proof}
        Firstly, by constructing that $V_0:\mathbf{R}^{n+1}\rightarrow\mathbf{R},\bm{\eta}(t)\mapsto\bm{\eta}(t)^{\top}P\bm{\eta}(t)$, we denote $\lambda_1=\lambda_{\min}(P)$ and $\lambda_2=\lambda_{\max}(P)$ and compute the time derivative of $V_0(\bm{\eta})$ along \eqref{eq: error system for integral-chain systems} as 
        \begin{equation*}
            \begin{aligned}
                \frac{\mathrm{d}V_0}{\mathrm{d}t}\Big|_{\bm{\eta}(t)}
                =& \epsilon^{-1}[\star]^{\top}
                \left[\begin{array}{cc}
                    \widetilde{\mathcal{A}}^{\top}P+P\widetilde{\mathcal{A}} & -P\\
                    -P& O
                \end{array}\right]
                \left[\begin{array}{c}
                    \bm{\eta}(t)\\
                    \bm{\pi_{\theta}}(\bm{\eta})
                \end{array}\right]\\
                &+\frac{\partial V_0}{\partial \eta_{n+1}}h(t),
            \end{aligned}
        \end{equation*}
        Due to the assumption of LMI \eqref{eq: LMI for integral-chain systems} and Lemma \ref{lem: QC for K mappings}, we imply that there exists $\lambda_3>0$ such that
        $$
        \begin{aligned}
            &[\star]^{\top}
                \left[\begin{array}{cc}
                    \widetilde{\mathcal{A}}^{\top}P+P\widetilde{\mathcal{A}} & -P\\
                    -P& O
                \end{array}\right]
                \left[\begin{array}{c}
                    \bm{\eta}(t)\\
                    \bm{\pi_{\theta}}(\bm{\eta})
                \end{array}\right]\leq-\lambda_3\left\|\bm{\eta}(t)\right\|_2^{2} \\
            & -\underbrace{[\star]^{\top} \mathbf{\Psi}(n+1)^{\top} \mathbf{M}(n+1) \mathbf{\Psi}(n+1)
                \left[\begin{array}{c}
                    {\xi}^{n+1}_{\sigma}(t) \\
                    w^{n+1}_{\sigma}(t)
                \end{array}\right]}_{\geq 0}.
        \end{aligned}
        $$ 
        Moreover, we obtain that
        \begin{equation}
            \begin{aligned}
                \frac{\mathrm{d}V_0}{\mathrm{d}t}\Big|_{\bm{\eta}(t)}
                \leq&-\epsilon^{-1}\lambda_3\left\|\bm{\eta}(t)\right\|_2^{2}+\frac{\partial V_0}{\partial \eta_{n+1}}h(t) \\
                \leq&-\epsilon^{-1}\frac{\lambda_3}{\lambda_2}V_0(\bm{\eta})+\frac{\partial V_0}{\partial \eta_{n+1}}h(t),
            \end{aligned}
        \end{equation}
        where $h(t)=\mathcal{F}_t+\sum_{i = 1}^{n-1}x_{i+1}\mathcal{F}_{x_i}+bu\mathcal{F}_{x_n}+\dot{w}\mathcal{F}_w.$
        
        Secondly, by retrieving Assumption \ref{ap: the uncertainty for integral-chain} about the boundedness of $\bm{x}$ and $u(t)$ and continuity of $\chi(\bm{x},w)$, we use the the Heine–Borel theorem to obtain that the uncertain term $h(t)$ is also bounded, i.e., $|h(t)|\leq M_0$
        Hence, by combining the last term with $\mathrm{d}V_0/\mathrm{d}t$, we obtain that
        
        $$
            \frac{\mathrm{d}\sqrt{V_0}}{\mathrm{d}t}\Big|_{\bm{\eta}(t)}\leq-\epsilon^{-1}\frac{\lambda_3}{2\lambda_2}\sqrt{V_0(\bm{\eta})}+\frac{\lambda_2}{\sqrt{\lambda_1}}M_0.
        $$
        Applying the Gronwall-Bellman inequality again implies that
        \begin{equation}
           \sqrt{V_0(\bm{\eta})}\leq\left(\sqrt{V_0(\bm{\eta}(0))}-M_1\right)e^{-\frac{\lambda_3}{2\epsilon\lambda_2}t}+M_1, 
           \label{eq: result of tracking error}
        \end{equation}
        where $M_1=\epsilon\frac{2\lambda_2^2}{\lambda_3\sqrt{\lambda_1}}M_0$. To be specific, if $\epsilon \rightarrow 0^{+}$, we obtain
        $$
        \begin{aligned}
        \sqrt{V_0(\bm{\eta}(0))}e^{-\frac{\lambda_3}{2\epsilon\lambda_2}t}=&\left(\sum_{i=1}^n\left|\frac{(x_i(0)-\widehat{x}_i(0))}{\epsilon^{n+1-i}}\right|^2\right)^{\frac{1}{2}}e^{-\frac{\lambda_3}{2\epsilon\lambda_2}t}\\
        &\rightarrow0^+.\\
        \end{aligned}
        $$
        Therefore, as $\epsilon\rightarrow 0^+$, we obtain that for $t\in\mathcal{T}_{\geq T}$, 
        $$\|\bm{x}-\widehat{\bm{x}}\|_2= \left(\sum_{i=1}^n|\epsilon^{n+1-i}\eta_i|^2\right)^{\frac{1}{2}}\leq\epsilon\|\bm{\eta}\|_2\rightarrow0^+.$$
        Moreover, $|x_{n+1}-\widehat{x}_{n+1}|=|\eta_{n+1}|\leq\|\bm{\eta}\|_2\rightarrow0^+.$
        These complete the whole proof.
    \end{proof}

\subsection{Theorem \ref{theo: LMI for nonintegral-chain systems}}
 \begin{proof}
    Firstly, we denote $x_1(t)\triangleq x(t)$, $x_2(t)\triangleq\mathcal{K}(t)$, then the system \eqref{eq: nonintegral-chain systems} can be rewritten into $$
    \left\{
    \begin{aligned}
    \dot{x}_1=&Ax_1+Bu+B_wx_2,\\
    \dot{x}_2=&\nabla_t \mathcal{K},\\
    y=&\mathbf{C}\left[
    \begin{array}{c}
         x_1  \\
         x_2 
    \end{array}\right].
    \end{aligned}
    \right.
    $$
    
    By denoting the errors $e_i(t)=x_i(t)-\widehat{x} _i(t)$ and $\eta_i(t)=\frac{e_i(\epsilon t)}{\epsilon^{2-i}}$, we have the following formulation:
    $$
    \left[\begin{array}{c}
        \dot{\eta}_1\\
        \dot{\eta}_2
    \end{array}\right]=
    \underbrace{\left[\begin{array}{cc}
        \epsilon A & B_w\\
        O & O
    \end{array}\right]}_{\mathbf{A}_{\epsilon}}
    \underbrace{\left[\begin{array}{c}
        \eta_1\\
        \eta_2
    \end{array}\right]}_{\bm{\eta}(t)}-
    \underbrace{\left[\begin{array}{c}
        \pi_{\theta_1}(\mathbf{C}\bm{\eta})\\
        \pi_{\theta_2}(\mathbf{C}\bm{\eta})
    \end{array}\right]}_{\bm{\pi_{\theta}}(t)}
    +
    \left[\begin{array}{c}
        0\\
        \epsilon \nabla_t \mathcal{K}
    \end{array}\right].
    $$
    In form, $\bm{\eta}(t)$ can be considered as the input of NN mapping vector $\bm{\pi_{\theta}}$. From Assumption \ref{ap: the uncertainty for nonintegral-chain systems}, it is easy to check that $\|\nabla_t \mathcal{K}\|_2\leq M$ with $M>0$. We take the Lyapunov function $V(\bm{\eta})=\bm{\eta}^{\top}\mathbf{P}\bm{\eta}$. Then, by applying Lemma \ref{lem: QC for K mappings} and $D_1\succ O$, we denote $\lambda_1=\lambda_{\min}(\mathbf{P})$ and $\lambda_2=\lambda_{\max}(\mathbf{P})$ and consequently have the following inequality:
    \begin{equation}
        \begin{aligned}
        &\frac{\mathrm{d}V}{\mathrm{d}t}\Big|_{\bm{\eta}(t)}\\
        \textcolor{blue}{=}&[\star]^{\top}
        \left[
            \begin{array}{cc}
                \mathbf{A}_{\epsilon}^{\top}\mathbf{P} + \mathbf{P}\mathbf{A}_{\epsilon}   & -\mathbf{P}\\
                -\mathbf{P}&O
            \end{array}
        \right]
        \left[\begin{array}{l}
                \bm{\eta}(t) \\
                \bm{\pi_{\theta}}(t)
            \end{array}
        \right]+\frac{\partial V}{\partial \eta_2}\epsilon \nabla_t \mathcal{K} \\
        \leq&-[\star]^{\top} \bm{\Psi}(2)^{\top} \mathbf{M}(2) \bm{\Psi}(2)\left[\begin{array}{c}
            {\xi}^2_{\sigma}(t) \\
            w^2_{\sigma}(t)
            \end{array}\right]\\
            &-\kappa\left\|\bm{\eta}(t)\right\|_2^{2}+2\epsilon M\lambda_2\left\|\bm{\eta}(t)\right\|_2\\
        \leq&-\frac{\kappa}{\lambda_2}V(\bm{\eta})+\frac{2\epsilon M\lambda_2}{\sqrt{\lambda_1}}\sqrt{V(\bm{\eta})}.
    \end{aligned}
    \end{equation}
    Uniformly, by using Gronwall-Bellman inequality, we derive $\left\|e_{i}(t)\right\|_2\rightarrow 0^+$ (as $\epsilon\rightarrow 0^+$) from the following inequality:
    $$
    \left\|e_{i}(t)\right\|_2 \leqslant \epsilon^{2-i}\left[\sqrt{\frac{V(\bm{\eta}(0))}{\lambda_1}} e^{-\frac{\kappa t}{2 \lambda_{2} \epsilon}}+\frac{2\epsilon M\lambda_2^2}{\lambda_1 \kappa} (1-e^{-\frac{\kappa t}{2 \lambda_{2}\epsilon}})\right].
    $$
    This completes the proof.
    \end{proof}
    
\bibliographystyle{ieeetr}
\bibliography{nn-observer}  



\begin{IEEEbiography}
{Song Chen} received the bachelor’s degree in mathematics from China University of Petroleum, Beijing, China, in 2020. He is currently working toward the Ph.D. degree in operational research and cybernetics with Zhejiang University, Hangzhou, China.

His research interests include nonlinear control, learning-based control, machine learning theory, and their applications in robotics.
\end{IEEEbiography}

\vspace{-0 mm}
\begin{IEEEbiography}
{Shengze Cai} received the B.Sc. and the Ph.D.
degrees from Zhejiang University, Hangzhou, China,
in 2014 and 2019, respectively. 

He is currently an assistant professor with the College of Control Science \& Engineering, Zhejiang University (ZJU). Prior to joining ZJU in 2022, he was a Post-Doctoral Research Associate
with the Division of Applied Mathematics, Brown
University, Providence, RI, USA. His research interests include scientific  machine learning, data/image processing, control \& optimization as well as  flow visualization techniques. 
\end{IEEEbiography}

\vspace{-0 mm}
\begin{IEEEbiography}
{Tehuan Chen} received the bachelor’s degree from  Hangzhou Dianzi University, Hangzhou, China,  in 2011, and the Ph.D. degree from the College of Control Science and Engineering, Zhejiang University, Hangzhou, in 2016.   

He is currently an Associate Professor with the  School of Mechanical Engineering and Mechanics, Ningbo University, Ningbo, China. His research interests include robotics, optimal control, and distributed parameter systems.
\end{IEEEbiography}
\vspace{-0 mm}
\begin{IEEEbiography}
{Chao Xu} (Senior Member, IEEE), received the Ph.D. degree in mechanical engineering from Lehigh University, Bethlehem, PA, USA, in 2010.

He is currently Associate Dean and Professor of Controls and Autonomous Systems with the College of Control Science $\&$ Engineering, Zhejiang University (ZJU). He serves the inaugural Dean of ZJU Huzhou Institute, as well as plays the role of the Managing Editor for two international journals, e.g., \textit{IET Cyber-Systems and  Robotics} (IET-CSR), and \textit{Journal of Industrial and Management Optimization} (JIMO). His research expertise is Cybernetic Physics and Autonomous Mobility in general, with a focus on, modeling and  control of aerial robotics with applications, machine learning for dynamic systems and control, visual sensing and machine learning for complex fluids.
\end{IEEEbiography}
\vspace{-0 mm}
\begin{IEEEbiography}
{Jian Chu} (Senior Member, IEEE) was born in 1963. He received the B.Sc., M.S., and Ph.D. degrees  from Zhejiang University (ZJU), Hangzhou, China, in 1982, 1984, and 1989, respectively. He attended the joint Ph.D. Program of ZJU and Kyoto University, Kyoto, Japan.

After that, he joined the faculty of ZJU, where he  became a Full Professor in 1993. He is the Founder of the Institute of Cyber-Systems and Control, ZJU. He is also the Founder of the SUPCON Group, Hangzhou, which is considered as the top automation company. His current research interests include industrial process automation and computer control systems (i.e., industrial operating systems and control-module-on-chip).
\end{IEEEbiography}

\end{document}